\newcommand{\norm}[1]{{\left\nuert\kern-0.25ex\left\vert\kern-0.25ex\left\vert #1\right\vert\kern-0.25ex\right\vert\kern-0.25ex\right\vert}}
\newcommand{\N}{\mathbb{N}}
\newcommand{\R}{\mathbb{R}}
\newcommand{\T}{\mathbb{T}}
\newcommand{\eps}{\mathbb{\varepsilon}}
\newcommand{\al}{\alpha}
\newcommand{\dive}{\mathrm{div}}
\newcommand{\pt}{\partial_t}
\newcommand{\pa}{\partial}
\newcommand{\D}{\nabla}
\newcommand{\vp}{\varphi}
\newcommand{\calf}{\mathcal{F}}
\newcommand{\cubic}{C\mathscr{E}_T^{1/2}\mathscr{D}(t)}
\newcommand{\Ni}{\mathcal{N}_i}
\newcommand{\Ne}{\mathcal{N}_e}
\newtheorem{theorem}{Theorem}[section]
\newtheorem{lemma}{Lemma}[section]
\newtheorem{proposition}{Proposition}[section]
\numberwithin{equation}{section}
\begin{document}
	\title{\bf Global convergence and error estimates in infinity-ion-mass limits for Bipolar Euler-Poisson system}
	
	\author{Yachun Li$^{1}$, Shihao Wang$^{2,3}$ and Liang Zhao$^{4,*}$}
	\date{}
	\maketitle \markboth{Convergence rates in infinity-ion-mass limits for Euler-Poisson system}{Y. Li, S. Wang and L. Zhao}
	
	\vspace{-3mm}
	
	\begin{center}
		{\small 
			$^1$School of Mathematical Sciences, CMA-Shanghai, MOE-LSC, and SHL-MAC  \\Shanghai Jiao Tong University
\\Shanghai 200240, P. R. China
\\[2mm]			
	$^2$School of Mathematical Sciences, Shanghai Jiao Tong University\\
			Shanghai 200240, P.R. China\\[2mm]
			$^3$Department of Mathematics, University of Wisconsin-Madison\\
			Madison 53706, United States\\[2mm]
			$^4$Mathematical Modelling \& Data Analytics Center,
			Oxford Suzhou Centre for Advanced Research\\
			Suzhou 215123, P.R. China
		
		}
	\end{center}

	\footnotetext{$*$ Corresponding author \\
		E-mail address: liang.zhao@oxford-oscar.cn}
	
	\vspace{1cm}
	
	\begin{center}
		\begin{minipage}{15cm}
			\small
			{\bf Abstract.} This paper is concerned with the global-in-time convergence from bipolar Euler–Poisson system (BEP) to unipolar one (UEP) through the infinity-ion-mass limit by letting the ratio of the mass of ion $m_i$ over that of electron $m_e$ goes to infinity. 
			The global convergence of the limit is obtained for smooth solutions sufficiently close to constant equilibrium states. Furthermore, by applying the stream function method and taking advantage of the anti-symmetric structure of the error system, one obtains the corresponding global-in-time error estimates between smooth solutions of (BEP) and (UEP). It is worth mentioning that due to the strong coupling through the Poisson equation in bipolar system, stream functions for ions and electrons equations should be constructed separately based on asymptotic expansions of solutions, which is very different from the case of unipolar system. 
		\end{minipage}
	\end{center}
	
	\vspace{7mm}
	
	\noindent {\bf Keywords:} Global-in-time error estimates, stream function, Euler-Poisson system, infinity-ion-mass limits, asymptotic expansion
	
	\vspace{5mm}
	
	\noindent {\bf AMS Subject Classification (2020)~:}  35B25, 35L45, 35L60, 35Q35, 35Q60
	
	\vspace{5mm}
	
	\section{Introduction}
 As the limiting system under the non-relativistic limit for the Euler-Maxwell system (See \cite{Peng2007,Peng2017,Wasiolek2016}), Euler-Poisson system plays an important role in describing the motions of charged fluids (ions and electrons) in semi-conductors or plasma when the effect of the magnetic field is weak. The periodic problem for the bipolar Euler-Poisson system (BEP) is of the form \cite{Ali2003,Chen1984,Markowich1990},
\begin{equation}\label{start}
\begin{cases}
\pt\rho_{\nu}+\mathrm{div}\left(\rho_{\nu}u_{\nu}\right)=0,
\\
m_{\nu}\pt\left(\rho_{\nu}u_{\nu}\right)+m_{\nu}\mathrm{div}\left(\rho_{\nu}u_{\nu}\otimes u_{\nu}\right)+\nabla p_\nu(\rho_\nu)=q_\nu \rho_\nu\nabla \vp-m_\nu\rho_\nu u_\nu,\\
\triangle \vp=\rho_i-\rho_e,\\
t=0: (\rho_\nu,u_\nu) = (\rho_{\nu,0},u_{\nu,0}),
\end{cases}
\end{equation}
for $t>0$ and $x=(x_1,\cdots,x_d)\in\T^d$, where $\T^d$ is a torus in $\R^d$. Here for $\nu=e,i$ with $e$ standing for electrons and $i$ ions, $ \rho_\nu$ stand for the particle density and $u_\nu$ the average velocity of ions and electrons respectively, $ \vp $  is the scaled electric potential. The pressure functions $ p_\nu(\rho_\nu) $
are supposed to be smooth and strictly increasing for all $ \rho_\nu > 0 $.  The parameters $m_\nu$ stand for the mass of a single particle and $(q_e,q_i)=(-1,1)$ stand for the charge of particles. 

The (BEP) system is symmetrizable hyperbolic for $\rho_{\nu}>0$ if regarding $\D \vp$ as a zeroth order term of densities $\rho_\nu$ by the Poisson equation, hence the existence of  local smooth solution is guaranteed by classical hyperbolic theories \cite{Kato1975,Lax1973,Majda1984}. For smooth initial data, Al{\`\i} \cite{Ali2003} proved the global existence of smooth solutions,
which are sufficiently close to the equilibrium state, to equations \eqref{start}, see also Li-Yang \cite{Li2012} and Tong-Tan-Xu \cite{Tong2020} for the decay rate problem. In Wu-Li \cite{Wu2016} and Wu-Wang \cite{Wu2014}, the authors studied the pointwise estimates of solutions. See also Peng-Xu \cite{P2013} and Xu-Kawashima \cite{Xu2015} when initial data is in critical Besov spaces.

On the scale of electrons, (BEP) can be simplified based on the fact that the velocity of electrons is much greater than that of ions. Consequently, ions are often regarded as non-moving and becoming a uniform background density for simplicity. In fact, by assuming 
\[
\rho_i=b(x),\qquad u_i=0,
\] 
(BEP) becomes the following unipolar Euler-Poisson model for electrons,
\begin{equation}\label{unipolar}
\begin{cases}
\pt\bar{\rho}_{e}+\mathrm{div}\left(\bar{\rho}_{e}\bar{u}_{e}\right)=0,
\\
m_{e}\pt\left(\bar{\rho}_{e}\bar{u}_{e}\right)+m_{e}\mathrm{div}\left(\bar{\rho}_{e}\bar{u}_{e}\otimes \bar{u}_{e}\right)+\nabla p_e(\bar{\rho}_e)=-\bar{\rho}_e\nabla \bar{\vp}-m_e \bar{\rho}_e \bar{u}_e,\\
\triangle \bar{\vp}=b(x)-\bar{\rho}_e,
\end{cases}
\end{equation}
which has been widely studied, see for instance \cite{Ali2000, Guo1998, Hsiao2003, Peng2015}. However, the passage from (BEP) to (UEP) is only based on physical observations and lack of  rigorous mathematical proof. 

In this paper, we will give the rigorous global-in-time convergence analysis from (BEP) to (UEP) in the perspective of mass. Based on the large difference between the mass of electrons and ions, one can assume $m_e/m_i \rightarrow 0$. In order to derive the (UEP) system \eqref{unipolar}, we let $m_e=1$ be fixed and $1/m_i\to 0$, known as the infinity-ion-mass limit, which is another interpretation of the zero-electron-mass limit ($m_i$ fixed and $m_e\to 0$, see for instance \cite{Ali2011,Giuseppe2010,Goudon1999,Peng2015,Xu2011}). The infinity-ion-mass limit was introduced in Xi-Zhao \cite {Xi2020}, where the authors obtained the local convergence of this limit for (BEP) without damping mechanism for velocities. We also refer readers to Zhao \cite{Zhao2021a} for this limit applied for bipolar Euler-Maxwell system. 

The global convergence of the infinity-ion-mass limit can be obtained by establishing uniform energy estimates with respect to the small parameter $m_e/m_i$. When it comes to the global-in-time error estimates between smooth solutions to (BEP) and (UEP), the energy estimates for error variables do not close as the structure has changed. For this difficulty, a stream function method should be applied to get a $L^2$ dissipative energy estimate for $\rho_\nu-\bar{\rho}_\nu$. For a conservative equation
\begin{equation*}
	\pt z + \dive w=0,
\end{equation*}
we call $\psi$ a stream function associated to this equation if $\psi$ satisfies
\[
\left\{
\begin{aligned}
&\pt \psi=w+K,\\
&\dive \psi=-z,
\end{aligned}
\right.
\] 
with some divergence-free term $K$. The solution of the above system can be established by letting
\[
\psi(t,x)=\psi(0,x)+\int_{0}^{t} (w(\tau,x)+K(\tau,x))d\tau, \qquad \text{with}\,\,\,\,\dive \psi(0,x)=-z(0,x).
\]
In our case, the conservative equation can be chosen as the difference between the mass equations of \eqref{start} and \eqref{unipolar} with $z=\rho_e-\bar{\rho}_e$. Then the inner product of the stream function $\psi$ with the difference between momentum equations of \eqref{start} and \eqref{unipolar} can admit a dissipative energy for $\rho_e-\bar{\rho}_e$ by noticing the strict monotonicity of the pressure function. This method has been successfully used for many fluid dynamical models. We refer readers to the isothermal Euler system with damping in Junca-Rascle \cite{Junca2002}, the M1 model in Goudon-Lin \cite{GOUDON2013579} and the Euler-Maxwell and Euler-Poisson system \cite{LI2021185}, see also \cite{Li2021, Zhao2021a}. However, these results are all relaxation type and are for unipolar systems. Some essential difficulties arise for bipolar models under the infinity-ion-mass limit:

\begin{itemize}
	\item The pressure term in the momentum equation for ions vanishes after the passage of the limit, which makes the classical stream function method invalid for the ion equations. Besides, different from relaxation type limits, the dissipative estimates for $\rho_e-\bar{\rho}_e$ are coupled with $u_e-\bar{u}_e$, which can not be obtained only based on stream function methods. 
	\item  For bipolar models, the equations for electrons can not be treated independently as the strong coupling effect of $\rho_e$ and $\rho_i$ results in the failure of the natural stream function candidate $\D(\vp-\bar{\vp})$.
\end{itemize}

In order to handle these difficulties, different treatments are applied for electron and ion equations. For ion equations,  due to the vanishing of the dissipative structure for $\rho_i-\bar{\rho}_i$, we construct the stream function for ion based on some auxiliary equations. For electron equations,  the global $L^2-$estimates is obtained and energy estimates are closed based on the anti-symmetric structure (see for instance \cite{Guo2011,Peng2015a}) and the stream function technique. To our best knowledge, this seems to be the first global error estimate result for (BEP) concerning bipolar systems under limits other than relaxation limits.

In the following, we let $m_e=1$ and define the small parameter $\eps=m_i^{-1/2}$. Thus the infinity-ion-mass limit means letting $\eps\rightarrow 0$. We denote $s>\frac{d}{2}+1$ an integer and $ C>0$  a generic constant independent of the small parameter $ \eps $ and any time.  For a multi-index $\alpha=(\alpha_1,\cdots,\alpha_d)\in\mathbb{N}^d$,
we denote
\[  \pa_x^\alpha=\dfrac{\pa^{|\alpha|}}{\pa x_1^{\alpha_1}\cdots\pa x_d^{\alpha_d}}
\quad {\text {with}} \quad |\alpha|=\alpha_1+\cdots+\alpha_d.  \]
For simplicity, we denote by $\|\cdot\|$, $\|\cdot\|_\infty$ and $\|\cdot\|_l$
the usual norms of $L^2\stackrel{def}{=}L^2(\T^d)$,
$L^\infty\stackrel{def}{=}L^\infty(\T^d)$ and $H^l\stackrel{def}{=}H^l(\T^d)$
for all integers $l\ge 1$, respectively. The inner product in $L^2(\T^d)$ is denoted as $\left<\cdot, \cdot\right>$. We denote $\nu=i,e$, where $i$ stands for ions and $e$ electrons. 

Now we begin to show the main results of the current paper. Theorem \ref{Thm2.2} concerns the global-in-time uniform estimates of smooth solutions to (BEP) near the constant equilibrium states in classical Sobolev spaces $H^s(\T^d)$.
\begin{theorem}\label{Thm2.2} (Uniform estimates with respect to $\eps$)
   Let $s>\frac{d}{2}+1$ be an integer. There exist positive constants $C_1$ and $\delta$ independent of $\eps$ such that for all $\eps\in(0,1]$, if
  \begin{equation*}\label{delta}
   \sum_{\nu=i,e}\|\rho_{\nu,0}^\eps-1\|_s+\dfrac{1}{\eps}\|\rho_{i,0}^\eps-1\|_{s-1}+\|u_{e,0}^\eps\|_s+\dfrac{1}{\eps}\|u_{i,0}^\eps\|_s\leq \delta,
  \end{equation*}
then for all $t>0$, the periodic problem \eqref{start} admits a unique global solution $(\rho_\nu^\eps, u_\nu^\eps, \varphi ^\eps)$ satisfying
\begin{eqnarray}\label{finestim}
&\quad& \sum_{\nu=i,e}\|\rho_\nu^\eps(t)-1\|_s^2+\dfrac{1}{\eps^2}\|\rho_i^\eps(t)-1\|_{s-1}^2+\dfrac{1}{\eps^2}\|u_i^\eps(t)\|_s^2
+\|u_e^\eps(t)\|_s^2+\|\D\vp^\eps(t)\|_s^2\nonumber\\
&&+\int_0^t\left(\sum_{\nu=i,e}\|\nabla \rho_\nu^\eps(\tau)\|_{s-1}^2+\dfrac{1}{\eps^2}\|u_i^\eps(\tau)\|_s^2+\|u_e^\eps(\tau)\|_s^2
+\|\Delta \vp^\eps(\tau)\|_{s-1}^2\right)d\tau\nonumber\\
&\leq&C_1\left(\|\rho_{i,0}^\eps-1\|_s^2+\|\rho_{e,0}^\eps-1\|_s^2+\dfrac{1}{\eps^2}\|\rho_{i,0}^\eps-1\|_{s-1}^2+\|u_{e,0}^\eps\|_s^2+\dfrac{1}{\eps^2}\|u_{i,0}^\eps\|_s^2\right).
  \end{eqnarray}
\end{theorem}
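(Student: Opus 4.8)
The standard route is a continuation (bootstrap) argument: local existence from the symmetrizable-hyperbolic structure gives a solution on a maximal interval, and one closes a uniform-in-$\eps$ a priori estimate of the form
\[
\mathscr{E}^\eps(t)+\int_0^t\mathscr{D}^\eps(\tau)\,d\tau\le C\mathscr{E}^\eps(0)+C\int_0^t \mathscr{E}^\eps(\tau)^{1/2}\mathscr{D}^\eps(\tau)\,d\tau,
\]
where $\mathscr{E}^\eps$ is the weighted energy appearing on the left of \eqref{finestim} (with the $\eps^{-2}$-weights on $\rho_i^\eps-1$ in $H^{s-1}$ and on $u_i^\eps$ in $H^s$) and $\mathscr{D}^\eps$ the corresponding dissipation. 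Under the smallness assumption on the data, $\mathscr{E}^\eps(0)\le C\delta^2$, and absorbing the cubic term for $\delta$ small yields $\mathscr{E}^\eps(t)+\int_0^t\mathscr{D}^\eps\le C_1\mathscr{E}^\eps(0)$ uniformly, which both prevents blow-up (global existence) and gives \eqref{finestim}. First I would rewrite \eqref{start} in perturbation variables $(\rho_\nu^\eps-1,u_\nu^\eps,\D\vp^\eps)$ with $m_e=1$, $m_i=\eps^{-2}$; note the ion momentum equation, divided by $m_i$, carries an $\eps^2$ in front of the convective and pressure-gradient terms, so the natural scaling is to track $u_i^\eps/\eps$ and $(\rho_i^\eps-1)/\eps$, exactly the weights in the theorem.

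**Energy estimates.**
The core is to apply $\pa_x^\alpha$ for $|\alpha|\le s$, take the $L^2$ inner product with the symmetrizer (here essentially $\pa_x^\alpha(\rho_\nu^\eps-1)$ weighted by $p_\nu'(\rho_\nu)/\rho_\nu$ and $\rho_\nu\pa_x^\alpha u_\nu^\eps$, summed over $\nu$), and integrate by parts; the Poisson equation is used to rewrite $\rho_i^\eps-\rho_e^\eps=\Delta\vp^\eps$ so that the electric-force terms $\langle q_\nu\rho_\nu\D\vp^\eps,\dots\rangle$ combine, across the two species, into a nonnegative quantity $\tfrac12\pa_t\|\D\pa_x^\alpha\vp^\eps\|^2$ plus the dissipative term $\|\Delta\pa_x^\alpha\vp^\eps\|^2$ coming from the damping. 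The damping $-m_\nu\rho_\nu u_\nu$ supplies $\|u_e^\eps\|_s^2+\eps^{-2}\|u_i^\eps\|_s^2$ in $\mathscr{D}^\eps$. To recover the parabolic-type dissipation $\|\D\rho_\nu^\eps\|_{s-1}^2$ one needs the standard extra step: use the momentum equation to write $\D p_\nu(\rho_\nu)=-m_\nu\pa_t(\rho_\nu u_\nu)+q_\nu\rho_\nu\D\vp-m_\nu\rho_\nu u_\nu+(\text{quadratic})$, pair with $\pa_x^\alpha u_\nu^\eps$ (or take a further derivative and pair appropriately), so that the monotonicity $p_\nu'>0$ produces $c\|\D\rho_\nu^\eps\|_{s-1}^2$ at the cost of a $\pa_t$ of a cross-term $\langle\rho_\nu u_\nu,\D\rho_\nu\rangle$-type quantity bounded by the energy, plus lower-order dissipation. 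All remaining commutators and nonlinear terms are handled by Moser-type estimates in $H^s$, $s>d/2+1$, and are cubic, i.e. bounded by $C\mathscr{E}^{\eps\,1/2}\mathscr{D}^\eps$; the only subtlety is bookkeeping the $\eps$-powers so that, e.g., a term like $\langle u_i^\eps\cdot\D u_i^\eps,u_i^\eps\rangle$, which is $O(\eps^3)$ in the native scaling, is indeed controlled by $(\eps^{-1}\|u_i^\eps\|_s)^{?}$ times the available dissipation.

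**Main obstacle.**
I expect the hard part to be closing the $\eps^{-2}$-weighted estimates for the ion quantities \emph{uniformly in} $\eps$: the ion pressure term is $\nabla p_i(\rho_i)=O(1)$ while the inertial terms are $O(\eps^2)$, so the balance that yields the $\eps^{-2}\|\rho_i^\eps-1\|_{s-1}^2$ control is delicate, and one must verify that every nonlinear/coupling term that appears when deriving it carries enough powers of $\eps$ to be absorbed — in particular the coupling term $\langle\rho_i\D\vp^\eps,\dots\rangle$ which links the $O(1)$ electron scale to the $O(\eps)$ ion scale through the Poisson equation. A clean way to organize this is to establish the unweighted $H^s$ estimate for all variables first (giving global existence and $\sum_\nu\|\rho_\nu^\eps-1\|_s^2+\|u_e^\eps\|_s^2+\|u_i^\eps\|_s^2+\|\D\vp^\eps\|_s^2$ bounded, with the full dissipation), and then run a \emph{second} energy estimate on the ion subsystem in the weighted norm, using the already-controlled electron quantities as (small, dissipative) source terms; the coupling is one-directional enough at this stage that the $\eps^{-2}$ weights close. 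Finally, a standard continuity/contradiction argument on the maximal time of existence upgrades the a priori bound to a genuine global solution satisfying \eqref{finestim}.
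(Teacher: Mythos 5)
Your proposal follows essentially the same route as the paper: a weighted symmetrizable-hyperbolic energy estimate (with the $\eps^{-1}u_i$, $\eps^{-1}(\rho_i-1)$ scaling built into the symmetrizer), the Poisson equation used to turn the cross-species electric terms into $\tfrac12\tfrac{d}{dt}\|\D\pa_x^\al\vp\|^2$, recovery of the $\|\D\rho_\nu\|_{s-1}^2$ and $\|\Delta\vp\|_{s-1}^2$ dissipation from the momentum equations at the cost of a $\tfrac{d}{dt}\langle\rho_\nu u_\nu,\D\rho_\nu\rangle$ cross term, Moser estimates yielding cubic remainders $C\mathscr{E}_T^{1/2}\mathscr{D}$, and a bootstrap to pass from the a priori bound to global existence. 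The paper performs the weighted ion estimate directly rather than your optional two-stage (unweighted-then-weighted) organization, but this is a cosmetic difference.
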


Combining with some compactness theories, see for instance \cite{Simon1987}, the following Theorem \ref{Thm2.3} concerning the global-in-time convergence of inifinity-ion-mass limit is obtained.
\begin{theorem}\label{Thm2.3}(Global convergence) Let $(\rho_\nu^\eps, u_\nu^\eps, \vp^\eps)$ be the unique global smooth solution to (BEP) obtained in Theorem \ref{Thm2.2}. If, as $\eps\rightarrow 0$,
\begin{equation*}
(\rho_{i,0}^\eps, \rho_{e,0}^\eps, u_{e,0}^\eps, u_{i,0}^\eps)\rightharpoonup (\bar{\rho}_{i,0}, \bar{\rho}_{e,0}, \bar{u}_{e,0},0), \,\,\,\text{weakly in } \,\, H^s,
\end{equation*}
then there exist functions $(\bar{\rho}_i,\bar{\rho}_e, \bar{u}_e, \bar{\varphi})\in L^\infty(\R^+;H^s)$, such that, as $\eps\rightarrow0$, up to subsequences,
\begin{eqnarray}
u_i^\eps&\rightarrow& 0 \quad \text{strongly in } \,\,\, L^\infty([0,T];H^s), \quad \forall\,T>0,\nonumber\\
\label{con2}(\rho_i^\eps,\rho_e^\eps, u_e^\eps, \D\vp^\eps)&\stackrel{*}\rightharpoonup& (\bar{\rho}_i, \bar{\rho}_e, \bar{u}_e, \D\bar{\vp}), \quad \text{weakly-* in } \,\,\, L^\infty(\R^+;H^s),
\end{eqnarray}
where $\bar{\rho}_i=\bar{\rho}_i(x)$ depends only on the space variable $x$, and $(\bar{\rho}_e, \bar{u}_e, \bar{\vp})$ is the unique global smooth solution to the unipolar system \eqref{unipolar} when $m_e=1$.
\end{theorem}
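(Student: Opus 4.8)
The plan is to derive Theorem~\ref{Thm2.3} as a compactness consequence of the $\eps$-uniform estimates of Theorem~\ref{Thm2.2}: extract weak-$*$ limits, upgrade them to strong-in-time convergence by an Aubin--Lions--Simon argument so that the quadratic nonlinearities of the fluid equations are stable in the limit, and then identify the limit with the unique global smooth solution of (UEP). By Theorem~\ref{Thm2.2} the families $\{\rho_\nu^\eps-1\}$, $\{u_e^\eps\}$, $\{\D\vp^\eps\}$ are bounded in $L^\infty(\R^+;H^s)$ uniformly in $\eps\in(0,1]$, while $\|u_i^\eps(t)\|_s+\|\rho_i^\eps(t)-1\|_{s-1}\le C\eps$. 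Thus $u_i^\eps\to0$ strongly in $L^\infty([0,T];H^s)$ for each $T>0$, and, by Banach--Alaoglu on the separable predual, along a subsequence $(\rho_i^\eps,\rho_e^\eps,u_e^\eps,\D\vp^\eps)\stackrel{*}{\rightharpoonup}(\bar\rho_i,\bar\rho_e,\bar u_e,\D\bar\vp)$ weakly-$*$ in $L^\infty(\R^+;H^s)$, the limit inheriting the same bounds by weak lower semicontinuity.

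Next I would establish $\eps$-uniform bounds on time derivatives. Writing \eqref{start} with $m_e=1$, $m_i=\eps^{-2}$, one has $\pt\rho_\nu^\eps=-\dive(\rho_\nu^\eps u_\nu^\eps)$, bounded in $L^\infty(\R^+;H^{s-1})$ (and $O(\eps)$ for $\nu=i$); since $H^s(\T^d)$ is a Banach algebra for $s>\frac d2$ and $p_e$ is smooth,
\[
\pt(\rho_e^\eps u_e^\eps)=-\dive(\rho_e^\eps u_e^\eps\otimes u_e^\eps)-\nabla p_e(\rho_e^\eps)-\rho_e^\eps\nabla\vp^\eps-\rho_e^\eps u_e^\eps
\]
is likewise bounded in $L^\infty(\R^+;H^{s-1})$, and from $\Delta\vp^\eps=\rho_i^\eps-\rho_e^\eps$ so is $\pt\D\vp^\eps=\nabla\Delta^{-1}(\pt\rho_i^\eps-\pt\rho_e^\eps)$. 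As $H^s(\T^d)\hookrightarrow\hookrightarrow H^{s'}(\T^d)$ for $s'<s$, the Aubin--Lions--Simon lemma \cite{Simon1987} then yields, along a further subsequence and for each $T>0$ and each $s'<s$, strong convergence of $\rho_e^\eps$, $\rho_e^\eps u_e^\eps$ and $\D\vp^\eps$ in $C([0,T];H^{s'})$; since $\rho_e^\eps\ge c>0$ uniformly, $u_e^\eps=(\rho_e^\eps u_e^\eps)/\rho_e^\eps\to\bar u_e$ in $C([0,T];H^{s'})$ too, and $\rho_i^\eps\to\bar\rho_i$ strongly as well.

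Then I would pass to the limit. Fixing $s'\in(\frac d2,s)$ makes $H^{s'}$ an algebra, so each product in \eqref{start} converges in $\calD'((0,T)\times\T^d)$ to the product of limits, and the linear terms pass by weak-$*$ convergence. The ion mass equation together with $u_i^\eps\to0$ gives $\pt\bar\rho_i=0$, so $\bar\rho_i=\bar\rho_i(x)$, and dividing the ion momentum equation by $m_i=\eps^{-2}$ shows it degenerates to a trivial identity; the electron equations and the Poisson equation converge to \eqref{unipolar} with $b(x):=\bar\rho_i(x)$. For the data, the strong $C([0,T];H^{s'})$ convergence evaluated at $t=0$ against the weak-$H^s$ convergence of the initial data forces $(\bar\rho_e,\bar u_e)(0,\cdot)=(\bar\rho_{e,0},\bar u_{e,0})$, and the weak-$H^s$ limits still obey the smallness threshold $\delta$ by lower semicontinuity. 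Hence the global well-posedness theory for (UEP) near constant equilibria (\cite{Ali2000,Guo1998,Hsiao2003,Peng2015}) provides a unique global smooth solution $(\bar\rho_e,\bar u_e,\bar\vp)$ with these data and background, and by uniqueness in $L^\infty(\R^+;H^s)$ the constructed limit coincides with it; since every subsequence of $\eps\to0$ has a further subsequence converging to this same object, \eqref{con2} follows.

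The main obstacle I anticipate is purely the bookkeeping of the $\eps$-uniform time-derivative bounds — in particular tracking the large coefficient $m_i=\eps^{-2}$ in the ion momentum balance — needed to feed the Aubin--Lions--Simon step; note, however, that this coefficient never enters the identification of the limit and survives only through Theorem~\ref{Thm2.2}. The remaining nontrivial input is the appeal to uniqueness for (UEP) to pin down the weak limit and thereby close the argument.
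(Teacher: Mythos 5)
Your proposal is correct and follows essentially the same route as the paper: uniform bounds from Theorem \ref{Thm2.2} give $u_i^\eps\to 0$ strongly and weak-$*$ limits for the remaining quantities, uniform bounds on time derivatives feed the Aubin--Lions--Simon compactness theorem to obtain strong convergence sufficient to pass to the limit in the nonlinear terms, and the ion mass equation yields $\pt\bar\rho_i=0$. The only (harmless) differences are cosmetic: you work with the conservative variable $\rho_e^\eps u_e^\eps$ where the paper uses the convective form and bounds $\pt u_e^\eps$ directly, and you add an explicit uniqueness/subsequence argument to identify the limit, which the paper leaves implicit.
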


Based on the above two theorems, one obtains the following Theorem \ref{thm4.1} concerning the global-in-time error estimates between (BEP) and (UEP) in a three-dimensional torus $\T^3$.
\begin{theorem}\label{thm4.1}
(Global convergence rate) Let $d=3$ and $s \geq 3$ be integers. Let the conditions in Theorems \ref{Thm2.2}-\ref{Thm2.3} hold. Let $(\rho_i^\eps,\rho_e^\eps, u_i^\eps, u_e^\eps, \varphi^\eps)$ be the global smooth solution to (BEP) obtained in Theorem \ref{Thm2.2} and $(\bar{\rho}_e, \bar{u}_e, \bar{\varphi})$ be the global solution to (UEP) obtained in Theorem \ref{Thm2.3}. Further assume there exists a constant $C_2>0$ independent of $\eps$ and any time such that
   \begin{equation}\label{cond2}
    \|\rho^\eps_{e,0}-\bar{\rho}_{e,0}\|_{s-1}+\dfrac{1}{\eps}\|\rho^\eps_{i,0}-\bar{\rho}_i(x)\|_{s-1}+\|u^\eps_{e,0}-\bar{u}_{e,0}\|_s + \dfrac{1}{\eps}\|u^\eps_{i,0}\|_s \leq C_2\eps,
  \end{equation}
  then for all $\eps \in (0,1]$, there exists a positive constant $C_3$ independent of $\eps$ and any time, such that the following estimates hold
  \begin{eqnarray*}
    && \sup_{\tau \in \mathbb{R}^+} (\|(u_e^\eps-\bar{u}_e)(\tau)\|^2_{s-1} + \|\rho_i^\eps(\tau)-\bar{\rho}_i\|^2_{s-1} + \|(\rho_e^\eps-\bar{\rho}_e)(\tau)\|^2_{s-1}+ \| \nabla (\varphi^\eps-  \bar{\varphi})(\tau)\|^2_{s-1})\\
    &&+ \int_{0}^{\infty}(\|(u^\eps_e-\bar{u}_e)(\tau)\|^2_{s-1} + \|\rho^\eps_i(\tau)-\bar{\rho}_i\|^2_{s-1} + \|(\rho^\eps_e-\bar{\rho}_e)(\tau)\|^2_{s-1}+ \| \nabla (\varphi^\eps-  {\varphi})(\tau)\|^2_{s-1}) d\tau \\
    &\leq& C_3 \eps^2,
  \end{eqnarray*}
  and
  \begin{equation*}
      \sup_{\tau \in \mathbb{R}^+} \|u^\eps_i(\tau)\|^2_{s-1}+  \int_{0}^{\infty}\|u^\eps_i(\tau)\|^2_{s-1} d\tau \leq C_3 \eps^4.
  \end{equation*}
\end{theorem}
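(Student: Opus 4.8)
The plan is to derive a closed error system, establish a global dissipative estimate for it by combining the classical $H^{s-1}$ energy method with \emph{two separately constructed stream functions} --- one for the electron and one for the ion mass equation --- together with the anti-symmetric structure of the electric coupling, and then to close this estimate for all $t>0$ by a continuity argument based on the $O(\eps^2)$ smallness of the initial error in \eqref{cond2}. Throughout, the uniform-in-$(\eps,t)$ bounds of Theorem~\ref{Thm2.2} and their analogue for \eqref{unipolar} would be used to absorb every nonlinear remainder as a higher-order term. Concretely, I would set $\Ne=\rho_e^\eps-\bar\rho_e$, $\Ni=\rho_i^\eps-\bar\rho_i$ (with $\bar u_i\equiv 0$ and $\bar\rho_i$ independent of $t$), $w_e=u_e^\eps-\bar u_e$, $w_i=u_i^\eps$, $\phi=\vp^\eps-\bar\vp$, so that $\triangle\phi=\Ni-\Ne$. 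Since the electron equations of \eqref{start} and \eqref{unipolar} have the same form, subtracting \eqref{unipolar} from the electron block of \eqref{start} gives a symmetrizable hyperbolic system for $(\Ne,w_e)$ with linear damping $-w_e$ that is coupled to the ion unknowns only through the Poisson relation $\triangle\phi=\Ni-\Ne$; subtracting the trivial ion limit gives $\pt\Ni+\dive((\bar\rho_i+\Ni)w_i)=0$ together with the $\eps$-singular balance $\eps^{-2}\bigl(\pt(\rho_i^\eps w_i)+\dive(\rho_i^\eps w_i\otimes w_i)+\rho_i^\eps w_i\bigr)+\D p_i(\rho_i^\eps)=\rho_i^\eps\D\vp^\eps$. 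That the electron inhomogeneity is carried entirely by the small quantity $\Ni$ is what eventually produces the $O(\eps^2)$ rate.

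\textbf{Electron block.} I would run $H^{s-1}$ energy estimates: for $|\al|\le s-1$, apply $\pa_x^\al$ and pair with $(\pa_x^\al\Ne,\pa_x^\al w_e)$. The electric term $\langle\rho_e^\eps w_e,\D\pa_x^\al\phi\rangle$ would be handled by the anti-symmetric structure (cf.\ \cite{Guo2011,Peng2015a}): using the mass error equation and $\triangle\phi=\Ni-\Ne$, it becomes $\tfrac12\tfrac{d}{dt}\|\D\pa_x^\al\phi\|^2$ plus lower-order terms, one of which is a benign coupling with $\Ni$ (and $w_i$). This controls $\|w_e\|_{s-1}^2$ by dissipation and $\|\D\phi\|_{s-1}^2$ by the energy, but leaves $\Ne$ without a sign. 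To dissipate $\Ne$ I would construct, as in the Introduction, the electron stream function $\psi_e$ from $\pt\Ne+\dive(\mathrm{flux}_e)=0$, i.e.\ $\dive\psi_e=-\Ne$, $\pt\psi_e=\mathrm{flux}_e+K_e$ with $\dive K_e=0$, the conserved spatial mean being $O(\eps^2)$ by \eqref{cond2}. Pairing $\pa_x^\al\psi_e$ with the electron momentum error, the pressure contributes $\langle\pa_x^\al\Ne,\pa_x^\al(p_e(\rho_e^\eps)-p_e(\bar\rho_e))\rangle\gtrsim c\|\Ne\|_{s-1}^2$ (up to lower-order terms) by strict monotonicity of $p_e$, at the cost of a total time derivative. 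Adding a small multiple of this inner product to the energy identity gives an electron Lyapunov functional $\Lambda_e$ with $\tfrac{d}{dt}\Lambda_e+c(\|\Ne\|_{s-1}^2+\|w_e\|_{s-1}^2)\le(\text{coupling with }(\Ni,w_i))+(\text{cubic remainders})$.

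\textbf{Ion block (the core).} Here the naive candidates fail: $\D p_i(\rho_i^\eps)$ is effectively multiplied by $\eps^2$ once the momentum equation is divided by $m_i^{-1}$, and $\D\phi$ is useless since $\triangle\phi=\Ni-\Ne$ mixes $\Ni$ with $\Ne$. Instead I would use the momentum equation to expand $\rho_i^\eps w_i=-\eps^2\bigl(\pt(\rho_i^\eps w_i)+\dive(\rho_i^\eps w_i\otimes w_i)\bigr)+\eps^2\rho_i^\eps\D\vp^\eps-\eps^2\D p_i(\rho_i^\eps)$ --- which already shows $\rho_i^\eps w_i=O(\eps^2)$ --- and feed it into the ion mass equation to obtain the closed damped-wave-type \emph{auxiliary equation} $\pt^2\Ni+\pt\Ni-\eps^2\triangle p_i(\rho_i^\eps)=-\eps^2\dive(\rho_i^\eps\D\vp^\eps)+\dive\dive(\rho_i^\eps w_i\otimes w_i)$. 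The ion stream function would then be $\psi_i=\D\Theta_i$ with $\triangle\Theta_i=\Ni$ (again with the $O(\eps^2)$ mean separated): pairing $\pa_x^\al\psi_i$ with the ion momentum error produces the dissipation $\langle\pa_x^\al\Ni,\pa_x^\al(p_i(\rho_i^\eps)-p_i(\bar\rho_i))\rangle\gtrsim c\|\Ni\|_{s-1}^2$, and --- decisively --- the $\eps^{-2}$-singular inertial and damping terms, paired with $\psi_i$, come multiplied by $\rho_i^\eps w_i=O(\eps^2)$ and, since $\|\psi_i\|_{s-1}\lesssim\|\Ni\|_{s-2}$, remain bounded. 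In parallel I would propagate the $\eps^{-2}$-weighted ion kinetic energy straight from the momentum equation, whose damping $-\eps^{-2}\rho_i^\eps w_i$ supplies $-c\,\eps^{-2}\|w_i\|_{s-1}^2$; writing the electric term as $\eps^2\rho_i^\eps\D\bar\vp+\eps^2\rho_i^\eps\D\phi$, the second piece is a coupling controlled by the electron dissipation $\|\D\phi\|_{s-1}^2$, and the first a forcing of size $O(\eps^2)$ which, by the decay of the solution of \eqref{unipolar}, is square-integrable in $t$.

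\textbf{Closing, and the main obstacle.} I would form the weighted Lyapunov functional $\Lambda=\Lambda_e+\|\Ni\|_{s-1}^2+\eps^{-2}\|w_i\|_{s-1}^2+(\psi_i\text{-correction})$ and the dissipation $\mathscr{D}=\|\Ne\|_{s-1}^2+\|w_e\|_{s-1}^2+\|\D\phi\|_{s-1}^2+\|\Ni\|_{s-1}^2+\eps^{-2}\|w_i\|_{s-1}^2$, add all the estimates, and absorb the Poisson coupling and every nonlinear remainder into $\mathscr{D}$ using the $O(\delta)$ bounds of Theorem~\ref{Thm2.2}; this leaves $\tfrac{d}{dt}\Lambda+c\,\mathscr{D}\le G(t)$ with $\int_0^\infty G\,dt\le C\eps^2$, the only non-absorbed term being the $\eps^2\D\bar\vp$ forcing. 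Since \eqref{cond2} gives $\Lambda(0)\le C\eps^2$, integrating yields $\sup_t\Lambda(t)+c\int_0^\infty\mathscr{D}\,dt\le C_3\eps^2$, which is exactly the first group of bounds; the weight $\eps^{-2}$ on $\|w_i\|_{s-1}^2$ upgrades the bound for $u_i^\eps=w_i$ to $O(\eps^4)$. The whole scheme would be made rigorous by a continuity/bootstrap argument run under the a priori hypothesis $\Lambda\le K\eps^2$ with $K$ large, which, together with Theorem~\ref{Thm2.2}, keeps all coefficients admissible and returns the strictly better bound. I expect the main obstacle to be precisely the ion block: designing $\psi_i$ and the auxiliary equation so that the $\eps^{-2}$-singular inertia stays controlled, coupling the ion and electron estimates despite the Poisson link that kills the naive candidate $\D\phi$, choosing the $\eps$-weights in $\Lambda$ and $\mathscr{D}$ consistently, and checking that the persistent-looking forcing $\eps^2\D\bar\vp$ is genuinely $O(\eps^2)$ in $L^2_t$ by decay of the limit system.
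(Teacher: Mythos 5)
Your overall architecture (separate stream functions for ions and electrons, anti-symmetric structure for the electron block, $\eps$-weighted Lyapunov functional, bootstrap) matches the paper's, but there is a genuine gap in your ion block, and it sits exactly where the paper's main new idea lives. You take $w_i=u_i^\eps$ and pair the full ion momentum balance $\eps^{-2}\bigl(\pt(\rho_i u_i)+\dive(\rho_i u_i\otimes u_i)+\rho_i u_i\bigr)+\D p_i(\rho_i)=\rho_i\D\vp^\eps$ with $\pa_x^\al\psi_i$. Splitting $\D\vp^\eps=\D\bar\vp+\calf$, the term $\langle \rho_i\D\bar\vp,\pa_x^\al\psi_i\rangle$ survives at order one: $\|\D\bar\vp\|_{s-1}$ is only $O(\delta)$ (exponentially decaying but not small in $\eps$), and $\|\psi_i\|_{s-1}\lesssim\|\Ni\|_{s-1}$, so after time integration this contributes $O\bigl(\delta\,(\int\|\Ni\|^2)^{1/2}\bigr)=O(\delta\eps)$ under the bootstrap hypothesis — an order of magnitude too large to conclude $\int_0^\infty\|\Ni\|_{s-1}^2\,d\tau\le C\eps^2$. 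The same defect appears in the mass equation: the ion flux $\rho_i u_i=\eps^2\rho_i(\eps^{-2}u_i)$ carries the persistent profile $\eps^2\bar u_i$ (with $\eps^{-2}u_i\rightharpoonup\bar u_i\neq0$), and when you substitute $\pt\psi_i$ back into the damping pairing the resulting term $\langle\bar u_i,\pa_x^\al\psi_i\rangle$ is again only $O(\delta\eps)$ in $L^1_t$. Your ``damped-wave auxiliary equation'' for $\Ni$ does not remove either obstruction.

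The paper's fix is precisely the asymptotic-expansion construction you omitted: it introduces $\bar u_i$ as the weak limit of $\eps^{-2}u_i$, satisfying $\pt\bar u_i+\bar u_i=\D\bar\vp$, so that the error unknown $w_i=\eps^{-2}u_i-\bar u_i$ obeys a momentum equation forced only by $\calf$ (the $\D\bar\vp$ forcing cancels identically), and it introduces the corrector $\bar\rho_i^1$ solving the heat equation $\pt\bar\rho_i^1+\dive\bar u_i=\Delta\bar\rho_i^1$, so that the stream function is built on $\Ni-\eps^2\bar\rho_i^1$ with flux $\eps^2\rho_i w_i+\eps^2\Ni\bar u_i+\eps^2\nabla\bar\rho_i^1$, in which the dangerous $\eps^2\bar u_i$ has been replaced by the quadratically small $\eps^2\Ni\bar u_i$ plus a parabolic term controlled through $L^1_tL^1_x$ bounds on $\pa_x^\al\D\Delta\bar\rho_i^1$. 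The remaining $\calf$-pairings are then not absorbed termwise but combined between the two species (the paper's Lemma concerning $\langle\pa_x^\al\calf,\rho_i\pa_x^\al\psi_i\rangle-\langle\rho_e\pa_x^\al\calf,\pa_x^\al\psi_e\rangle$), using $\dive(\psi_i-\psi_e)=-\Delta(\vp-\bar\vp)+\eps^2\bar\rho_i^1$, to produce the dissipation $-\tfrac12\|\calf\|_{s-1}^2$ — a step your sketch replaces with an unjustified appeal to ``electron dissipation of $\|\D\phi\|^2$.'' Finally, your claim that the $\eps^{-2}$-singular inertia ``remains bounded'' against $\psi_i$ hides the time-boundary term $\eps^{-2}\langle\pa_x^\al(\rho_i u_i),\pa_x^\al\psi_i\rangle(t)$, which is a priori only $O(\eps)$ and must be absorbed into the good term $\eps^{-2}\|\psi_i(t)\|^2$ generated by the damping, with an explicit smallness constant as in the paper's final $\mu_0$ argument.
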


This paper is organized as follows. Section 2 states preliminaries and reformulations. In Section 3, uniform estimates of (BEP) with respect to $\eps$ are established, based on which the global-in-time convergence towards (UEP) is obtained. Sections 4 focuses on the corresponding convergence rate.

\section{Preliminaries and reformulations}

In this section, we show some preliminaries and reformulations of system (BEP). The formal derivation of the limiting equations are also listed. 
\subsection{Reformulations  and local existence of smooth solutions} We first write (BEP) into a symmetric hyperbolic system. By writing the original momentum equation in \eqref{start} into convective form when $\displaystyle\inf_{x\in\T^3}\rho_{\nu}^\eps(x)>0$, system \eqref{start} becomes
\begin{equation}\label{main}
\begin{cases}
\pt \rho_i^\eps+\dive(\rho_i^\eps u_i^\eps)=0,\\
\pt u_i^\eps+(u_i^\eps\cdot\nabla)u_i^\eps+\eps^2\nabla h_i(\rho_i^\eps)=\eps^2\D\vp^\eps-u_i^\eps,\\
\pt \rho_e^\eps +\dive(\rho_e^\eps u_e^\eps)=0,\\
\pt u_e^\eps+(u_e^\eps\cdot\nabla)u_e^\eps+\nabla h_e(\rho_e^\eps)=-\D\vp^\eps-u_e^\eps, \\
\Delta\vp^\eps=\rho_i^\eps-\rho_e^\eps, 
\end{cases}
\end{equation}
where $h_\nu$ are the enthalpy functions are defined by
\[
h_\nu^\prime(\rho)=\dfrac{p_\nu^\prime(\rho)}{\rho},
\]
which are strictly increasing since $p_\nu'(\rho)>0$. The initial conditions of system \eqref{main} reads
\begin{equation*}
    t=0: \rho_{\nu}^\eps(0,x)=\rho_{\nu,0}^\eps(x),\   \
    u_{\nu}^\eps(0,x)=u_{\nu,0}^\eps(x).
\end{equation*}
Meanwhile, the following restriction is imposed to make $ \vp^\eps $ uniquely determined,
\begin{equation*}
	\int_{\T^d}\vp^\eps(x)dx=0.
\end{equation*}
the initial condition of $\vp^\eps$ is defined as follows to avoid appearance of the initial layer:
\[
\Delta \vp_0^\eps=\rho^\eps_{i,0}-\rho^\eps_{e,0}.
\]
System \eqref{main} admits an equilibrium state
\begin{eqnarray*}
	(\rho^\eps_e, \rho^\eps_i, u^\eps_e, u^\eps_i, \D\vp^\eps)=(1,1,0,0,0).
\end{eqnarray*}
Note that we can take $\D\vp^\eps$ as a zeroth order term as the $H^s$ norm of $\D\vp^\eps$ can be controlled based on
\[
\|\D\vp^\eps\|_s \leq C\|\D^2\vp^\eps\|_s \leq C\|\dive \D\vp^\eps\|_s\leq C (\|\rho^\eps_{i,0}-1\|_s +\|\rho^\eps_{e,0}-1\|_s),
\]
where the Poincar\'e inequality is used. Now we drop superscript $\eps$ for simplicity and denote
\[
N_\nu= \rho_\nu-1, \quad U_\nu=\left(\begin{matrix}
N_\nu\\
u_\nu\end{matrix}\right).
\]
For $1\leq j\leq d$ and $u_\nu=(u_\nu^1, \cdots, u_\nu^d)$, we denote
\begin{eqnarray*}
A_i^j(\rho_i, u_i)=\left(\begin{matrix}
     u_i^j & \rho_i \xi_j^\top\\
      \eps^2 h_i^\prime(\rho_i) \xi_j & u_i^j \mbox {\bf I}_d\\
     \end{matrix}\right),\qquad
A_e^j(\rho_e, u_e)=\left(\begin{matrix}
      u_e^j & \rho_e \xi_j^\top  \\
      h_e^\prime(\rho_e) \xi_j &   u_e^j \mbox{\bf I}_d \\
    \end{matrix}\right),
\end{eqnarray*}
and
\begin{eqnarray*}
Q_i(u_i,\vp)=\left(\begin{matrix}
                               0 \\
                               \eps^{2}\D\vp-u_i \\
                             \end{matrix}
\right),\qquad
Q_e(u_e,\vp)=\left(\begin{matrix}
                               0 \\
                              -\D\vp-u_e
                              \end{matrix}\right),
\end{eqnarray*}
where ${\bf{I}}_d$ is the $d\times d$ unit matrix and $\{\xi_k\}_{k=1}^d$ is the canonical basis of $\R^d$.  Finally, system \eqref{main} is rewritten into the following form
\begin{equation}\label{startsymmglobal}
  \pt U_\nu+\sum_{j=1}^d A_\nu^j(\rho_\nu, u_\nu)\pa_{x_j}U_\nu=Q_\nu(u_\nu, \D\vp),
\end{equation}
with initial conditions
\begin{eqnarray}\label{startsymmglobalinitial}
t=0: \quad (\rho_\nu,u_\nu, \varphi)=(\rho_{\nu,0}^\eps, u_{\nu,0}^\eps, \varphi_0^\eps), \quad x\in\T^d.
\end{eqnarray}
System \eqref{startsymmglobal}-\eqref{startsymmglobalinitial} is symmetrizable hyperbolic when $\rho_\nu>0$. Indeed, if we define the symmetrizers as
\begin{eqnarray*}
A_i^0(\rho_i)=\left(\begin{matrix}
                  h_i^\prime(\rho_i)&0 \\
                   0 & \eps^{-2} \rho_i \mbox{\bf I}_d\\
                   \end{matrix}\right),\qquad
A_e^0(\rho_e)=\left(\begin{matrix}
                  h_e^\prime(\rho_e)&0 \\
                   0 &  \rho_e \mbox {\bf I}_d\\
                   \end{matrix}\right),
\end{eqnarray*}
which are symmetric and positive definite for $\rho_\nu>0$, then the matrices $\tilde{A}_\nu^j$  defined by
\begin{eqnarray*}
\tilde{A}_i^j(\rho_i, u_i)&=&A_i^0(\rho_i) A_i^j(\rho_i, u_i) =\left(\begin{matrix}
 h_i^\prime(\rho_i)u_i^j &  p_i^\prime(\rho_i) \xi_j^\top\\
 p_i^\prime(\rho_i) \xi_j& \dfrac{1}{\eps^2}\rho_i u_i^j \mbox {\bf I}_d\\
\end{matrix}
\right),\\
\tilde{A}_e^j(\rho_e, u_e)&=&A_i^0(\rho_e) A_e^j(\rho_e, u_e) =\left(\begin{matrix}
 h_e^\prime(\rho_e)u_e^j &  p_e^\prime(\rho_e) \xi_j^\top\\
 p_e^\prime(\rho_e) \xi_j&  \rho_e u_e^j \mbox {\bf I}_d
\end{matrix}\right),
\end{eqnarray*}
 are symmetric. By the theory of Lax \cite{Lax1973} and Kato \cite{Kato1975} for the symmetrizable hyperbolic system, one has
\begin{proposition}\label{localexistence}(Local existence of smooth solutions)
Let  $ s \geq \dfrac{d}{2}+1$  be an integer and the initial data $ \left(\rho_{\nu,0}^\eps,\,u_{\nu,0}^\eps\right)\in H^s $  with  $ \rho_{\nu,0}^\eps \geq 2\underline{\rho} $  for some given positive constant  $ \underline{\rho}>0$ independent of  $ \eps $. 
Then there exists  $ T^\eps>0 $  such that the periodic problem \eqref{main} has a unique smooth solution  $ \left(\rho^\eps_\nu,\,u^\eps_\nu,\,\vp^\eps \right) $  defined on the time interval $ [0,T^\eps] $, satisfying  $ \rho_\nu^\eps \geq \underline{\rho} $  and
\begin{eqnarray*}
\left(\rho_\nu^\eps, u_\nu^\eps, \D\vp^\eps\right)&\in &C\left(\left[0,\,T^\eps\right];\,H^s\right)\cap C^1\left(\left[0,\,T^\eps\right];\,H^{s-1}\right).
\end{eqnarray*}
\end{proposition}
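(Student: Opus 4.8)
The plan is to realize the reformulated problem \eqref{startsymmglobal}--\eqref{startsymmglobalinitial} as a quasilinear symmetrizable hyperbolic system with a nonlocal lower-order source and then apply the classical iteration scheme of Kato \cite{Kato1975} and Majda \cite{Majda1984}. The one feature that takes it outside the textbook framework is the Poisson coupling: the source $Q_\nu(u_\nu,\D\vp)$ contains the term $\D\vp$, which is not a pointwise function of the unknowns but is determined nonlocally through $\Delta\vp=\rho_i-\rho_e$ together with the normalization $\int_{\T^d}\vp\,dx=0$. My first step would be to make this dependence explicit by writing $\D\vp=\D\Delta^{-1}(\rho_i-\rho_e)$, where $\Delta^{-1}$ is the inverse Laplacian acting on mean-zero data on $\T^d$, and to record that the estimate already displayed in the excerpt,
\[
\|\D\vp\|_s\leq C\left(\|\rho_i-1\|_s+\|\rho_e-1\|_s\right),
\]
shows that $(U_i,U_e)\mapsto\D\vp$ is a bounded linear map from $H^s\times H^s$ into $H^s$ (indeed with a derivative gain from elliptic regularity). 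Hence $Q_\nu$ is a smooth map of $(U_i,U_e)$ into $H^s$ and acts as a genuine zeroth-order term, so that the principal part of the system is exactly the symmetric one built from the symmetrizers $A_\nu^0$ and the symmetric matrices $\tilde A_\nu^j$ exhibited above.

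With this in hand, I would construct the solution by a standard Picard iteration. Fix $\eps\in(0,1]$. Given an approximate state $(U_\nu^{(n)},\vp^{(n)})$ with $\rho_\nu^{(n)}\geq\underline{\rho}$, solve the linear symmetric hyperbolic system
\[
A_\nu^0(\rho_\nu^{(n)})\pt U_\nu^{(n+1)}+\sum_{j=1}^d\tilde A_\nu^j(\rho_\nu^{(n)},u_\nu^{(n)})\pa_{x_j}U_\nu^{(n+1)}=A_\nu^0(\rho_\nu^{(n)})Q_\nu(u_\nu^{(n)},\D\vp^{(n)})
\]
for $U_\nu^{(n+1)}$ with the prescribed initial data, and then update $\vp^{(n+1)}$ by solving the Poisson equation with datum $\rho_i^{(n+1)}-\rho_e^{(n+1)}$. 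The core of the argument is the uniform-in-$n$ a priori estimate: applying $\pa_x^\alpha$ for $|\alpha|\leq s$, pairing with $A_\nu^0\pa_x^\alpha U_\nu^{(n+1)}$, using the symmetry of $\tilde A_\nu^j$ to integrate the transport term by parts, and bounding the commutators $[\pa_x^\alpha,\tilde A_\nu^j]\pa_{x_j}U_\nu^{(n+1)}$ by Moser-type product and commutator inequalities, I obtain a differential inequality for the energy $\sum_\nu\langle A_\nu^0\pa_x^\alpha U_\nu^{(n+1)},\pa_x^\alpha U_\nu^{(n+1)}\rangle$; the Poisson contribution enters only through $\|\D\vp^{(n)}\|_s$, controlled by the displayed elliptic bound. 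A Gronwall argument then furnishes constants $R>0$ and $T^\eps>0$, depending on $\eps$ and the initial $H^s$ norm but not on $n$, such that all iterates stay in the ball $\|U_\nu^{(n)}\|_s\leq R$ on $[0,T^\eps]$, with the density lower bound $\rho_\nu^{(n)}\geq\underline{\rho}$ preserved after possibly shrinking $T^\eps$ (using $H^s\hookrightarrow L^\infty$ since $s>d/2$, together with continuity in time).

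Convergence of the scheme I would obtain by estimating the differences $U_\nu^{(n+1)}-U_\nu^{(n)}$ in the lower norm $H^{s-1}$ (or $L^2$): the same energy method applied to the difference equation, now keeping track of the differences of the frozen coefficients and of $\D\vp^{(n)}-\D\vp^{(n-1)}$, shows that the map is a contraction on a possibly smaller interval, so $U_\nu^{(n)}$ converges in $C([0,T^\eps];H^{s-1})$ to a limit $U_\nu$. The uniform $H^s$ bound gives $U_\nu\in L^\infty([0,T^\eps];H^s)$ together with weak-$*$ continuity, and the standard Bona--Smith/Kato argument upgrades this to $U_\nu\in C([0,T^\eps];H^s)$; elliptic regularity then yields $\D\vp\in C([0,T^\eps];H^s)$. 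Uniqueness follows from the same $H^{s-1}$ difference estimate applied to two solutions. Finally, the time regularity $C^1([0,T^\eps];H^{s-1})$ is read off directly from the equation, $\pt U_\nu=-\sum_j A_\nu^j\pa_{x_j}U_\nu+Q_\nu$, whose right-hand side lies in $C([0,T^\eps];H^{s-1})$ because $U_\nu\in C([0,T^\eps];H^s)$, the $A_\nu^j$ are smooth functions of $U_\nu$, and $\D\vp\in C([0,T^\eps];H^s)$.

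The only genuinely non-routine point, and the one I would take most care over, is the nonlocal Poisson source: I must verify that $\D\vp$ really behaves as a zeroth-order perturbation at every level of the energy estimate and in the contraction step, which hinges on the elliptic gain in $\|\D\vp\|_s\leq C(\|\rho_i-1\|_s+\|\rho_e-1\|_s)$ and on the corresponding difference bound $\|\D\vp-\D\tilde\vp\|_{s-1}\leq C(\|\rho_i-\tilde\rho_i\|_{s-1}+\|\rho_e-\tilde\rho_e\|_{s-1})$. Everything else is the classical symmetrizable hyperbolic machinery; in particular no uniformity in $\eps$ is required here, since the $\eps$-dependent factors in $A_i^0$ and $A_i^j$ are harmless for fixed $\eps$ and are exactly what the uniform estimates of Theorem \ref{Thm2.2} are later designed to control.
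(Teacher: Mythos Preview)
Your proposal is correct and follows precisely the route the paper indicates: the paper does not actually give a proof of this proposition, but simply states that it follows from the classical theory of Lax \cite{Lax1973} and Kato \cite{Kato1975} for symmetrizable hyperbolic systems, after observing that $\D\vp$ may be treated as a zeroth-order term via the elliptic bound $\|\D\vp\|_s\leq C(\|\rho_i-1\|_s+\|\rho_e-1\|_s)$. Your sketch is exactly a fleshed-out version of that citation, with the appropriate care taken over the nonlocal Poisson source; nothing further is needed.
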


\subsection{Formal Derivation of the Limiting System.} 
We now formally derive the limiting system. The formal limits of $(\rho_i^\eps,\rho_e^\eps,u_i^\eps,u_e^\eps,\varphi^\eps)$ are denoted by $(\bar{\rho}_i,\bar{\rho}_e,\bar{u}_i,\bar{u}_e,\nabla \bar{\varphi})$. Then as $\eps\to 0$, the formal limiting system reads
\begin{equation}\label{formallimit}
\begin{cases}
\pt \bar{\rho}_i+\dive(\bar{\rho}_i \bar{u}_e)=0,\\
\pt \bar{u}_i+(\bar{u}_i\cdot\nabla)\bar{u}_i+\bar{u}_i=0,\\
\pt \bar{\rho}_e +\dive(\bar{\rho}_e \bar{u}_e)=0,\\
\pt \bar{u}_e+(\bar{u}_e\cdot\nabla)\bar{u}_e+\nabla h_e(\bar{\rho}_e)=-\D\bar{\vp}-\bar{u}_e, \\
\Delta\bar{\vp}=\bar{\rho}_i-\bar{\rho}_e. 
\end{cases}
\end{equation}
Let $\bar{v}_i=e^t \bar{u}_i$, the second equation in \eqref{formallimit} becomes
\begin{equation*}
    \pt \bar{v}_i+(\bar{u}_i\cdot\nabla)\bar{v}_i=0.
\end{equation*}
Consider the characteristic path $x(y,0;t)$ starting from $y$ and satisfying
\[\frac{d}{dt}x(y,0;t)=\bar{u}_i(x(y,0;t),t),\]
then one obtains
\[
\frac{d}{dt}\bar{v}_i(x(y,0;t),t)=0,
\] namely $\bar{v}_i$ is a constant along this path. This yields
\[y=x-\int_0^t \bar{u}_i(x,\tau) d\tau=x-\int_0^t e^{-\tau} \bar{v}_i(x,\tau) d\tau = x+\bar{u}_i(x,t)- e^{-t} \bar{u}_i(x,t),
\]
which gives $\bar{u}_i(x,t)=e^{-t}u^\eps_{i,0}(x+(1- e^{-t}) u^\eps_{i}(x,t))$. Thus by imposing $u_{i,0}^\eps\to 0$, one has $\bar{u}_i \to 0$. Hence $\partial_t \bar{\rho}_i=0$ by the first equation in \eqref{formallimit}, namely $\bar{\rho}_i$ only depends only on the spatial variables. Thus, the equations for electrons in  \eqref{formallimit} are decoupled from the bipolar system and become self-contained, which are of the form
\begin{equation}\label{efinal}
\begin{cases}
\pt\bar{\rho}_e +\dive(\bar{\rho}_e \bar{u}_e)=0,\\
\pt \bar{u}_e +(\bar{u}_e\cdot\nabla)\bar{u}_e+\nabla h_e(\bar{\rho}_e)=-\D\bar{\vp}-\bar{u}_e,\\
\Delta \bar{\vp}=\bar{\rho}_i(x)-\bar{\rho}_e,
\end{cases}
\end{equation}
with initial conditions
\begin{equation}\label{efinalinitial}
t=0: (\bar{\rho}_e, \bar{u}_e)=(\bar{\rho}_{e,0},\bar{u}_{e,0}).
\end{equation}

\section{Uniform Estimates and Global convergence}
In this section, we denote by $s> \frac{d}{2}+1$ an integer and $ C $  a generic positive constant independent of $ \eps $ and any time. We first prove Theorem \ref{Thm2.2}. In what follows, we will drop the superscript $\eps$ for simplicity and assume that the conditions in Theorem \ref{Thm2.2} hold. We denote
\[
 W=\left(
N_i,\eps^{-1}u_i,N_e,u_e,\D\vp\right)^\top.
\]
Without loss of generality, we suppose $T>0$ and $(\rho_\nu,u_\nu,\vp)$ is the local solution to \eqref{main} on $[0,T]$. We introduce the total energy and the dissipative energy as follows
\begin{eqnarray*}
\mathscr{E}(t)&=&\sum_{\nu=i,e}\|N_\nu\|_s^2+\dfrac{1}{\eps^2}\|N_i(t)\|_{s-1}^2+\dfrac{1}{\eps^2}\|u_i(t)\|_s^2
+\|u_e(t)\|_s^2+\|\D \vp(t)\|_s^2,\\
\mathscr{D}(t)&=&\sum_{\nu=i,e}\|\nabla N_\nu(t)\|_{s-1}^2+\dfrac{1}{\eps^2}\|u_i(t)\|_s^2+\|u_e(t)\|_{s}^2.
\end{eqnarray*}
Moreover, we set
\begin{equation*}\label{totalenergy}
\mathscr{E}_T=\sup_{0\leq t\leq T}\mathscr{E}(t),
\end{equation*}
which is assumed to be uniformly sufficiently small with respect to $T$ and $\eps$. Besides, because of the smallness of $\mathscr{E}_T$, it is reasonable to assume
\begin{equation}\label{lowerboundrho}
 \dfrac{1}{2}\leq \rho_\nu \leq \dfrac{3}{2} \quad  \text{and}\quad  h_\nu^\prime(\rho_\nu)\geq h_1,
\end{equation}
where $h_1$ is a positive constant independent of the small parameter $\eps$ and any time. One first obtains
\begin{lemma}($L^2-$estimate) It holds
\begin{equation}\label{L2estim}
\|W(t)\|^2+\int_0^t\left(\dfrac{1}{\eps^2}\|u_i(\tau)\|^2+\|u_e(\tau)\|^2\right)d\tau\leq C\|W(0)\|^2.
\end{equation}
\end{lemma}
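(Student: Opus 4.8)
The plan is to work with the \emph{physical} (kinetic plus pressure) energy of \eqref{main} rather than with the symmetrizer energy built from the $A_\nu^0$, so that the computation closes as an \emph{exact} identity with no quadratic remainder to absorb; this exactness is precisely why the right-hand side of \eqref{L2estim} carries no factor of $\mathscr{E}_T$.

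\textbf{Step 1 (pressure potentials).} For $\nu=i,e$ I would introduce $\Pi_\nu$ with $\Pi_\nu''=h_\nu'$ and $\Pi_\nu(1)=\Pi_\nu'(1)=0$. Integrating once gives the identity $\Pi_\nu'(\rho)=h_\nu(\rho)-h_\nu(1)$, and since by \eqref{lowerboundrho} one has $\rho_\nu\in[\tfrac12,\tfrac32]$ with $h_\nu'\ge h_1>0$ there, Taylor's formula at $\rho=1$ yields $c\,N_\nu^2\le\Pi_\nu(\rho_\nu)\le C\,N_\nu^2$ pointwise, with $\eps$-independent constants. \textbf{Step 2 (energy identity per species).} Test the ion momentum equation against $\eps^{-2}\rho_iu_i$ and the electron momentum equation against $\rho_eu_e$ and integrate over $\T^d$. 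Using the continuity equations, $\rho_\nu u_\nu\!\cdot\!\pt u_\nu+\rho_\nu u_\nu\!\cdot\!(u_\nu\!\cdot\!\D)u_\nu$ equals $\pt(\rho_\nu|u_\nu|^2/2)$ modulo a divergence, while $\tfrac{d}{dt}\int_{\T^d}\Pi_\nu(\rho_\nu)\,dx=\int_{\T^d}\rho_\nu u_\nu\!\cdot\!\D h_\nu(\rho_\nu)\,dx$ (using $\Pi_\nu'=h_\nu-h_\nu(1)$, $\pt\rho_\nu=-\dive(\rho_\nu u_\nu)$ and $\int_{\T^d}\dive(\,\cdot\,)=0$). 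The pressure contributions then cancel identically, leaving, with $\mathcal{E}_i:=\tfrac1{2\eps^2}\int_{\T^d}\rho_i|u_i|^2\,dx+\int_{\T^d}\Pi_i(\rho_i)\,dx$ and $\mathcal{E}_e:=\tfrac12\int_{\T^d}\rho_e|u_e|^2\,dx+\int_{\T^d}\Pi_e(\rho_e)\,dx$,
\[
\frac{d}{dt}\mathcal{E}_i=\int_{\T^d}\rho_iu_i\!\cdot\!\D\vp\,dx-\frac1{\eps^2}\int_{\T^d}\rho_i|u_i|^2\,dx,\qquad \frac{d}{dt}\mathcal{E}_e=-\int_{\T^d}\rho_eu_e\!\cdot\!\D\vp\,dx-\int_{\T^d}\rho_e|u_e|^2\,dx.
\]

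\textbf{Step 3 (electric coupling).} Adding the two identities, the coupling terms combine to $\int_{\T^d}(\rho_iu_i-\rho_eu_e)\!\cdot\!\D\vp\,dx$. Integrating by parts, using $\dive(\rho_iu_i-\rho_eu_e)=-\pt(\rho_i-\rho_e)=-\pt\Delta\vp$ from the continuity and Poisson equations, and integrating by parts once more, this equals $-\tfrac12\tfrac{d}{dt}\|\D\vp\|^2$. Hence
\[
\frac{d}{dt}\Big(\mathcal{E}_i+\mathcal{E}_e+\tfrac12\|\D\vp\|^2\Big)+\frac1{\eps^2}\int_{\T^d}\rho_i|u_i|^2\,dx+\int_{\T^d}\rho_e|u_e|^2\,dx=0.
\]
\textbf{Step 4 (conclusion).} By \eqref{lowerboundrho} and Step 1, the functional $\mathcal{E}_i+\mathcal{E}_e+\tfrac12\|\D\vp\|^2$ is, uniformly in $\eps$, comparable to $\|W\|^2=\|N_i\|^2+\eps^{-2}\|u_i\|^2+\|N_e\|^2+\|u_e\|^2+\|\D\vp\|^2$, and the dissipative term bounds $\eps^{-2}\|u_i\|^2+\|u_e\|^2$ from below up to a constant; integrating the identity in time from $0$ to $t$ gives \eqref{L2estim}.

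I do not expect a genuine analytic obstacle here once the correct multipliers are chosen; the one thing to watch is the $\eps$-bookkeeping — the $\eps^{-2}$ weight must be carried by both the ion kinetic energy \emph{and} the ion damping term, so that $\mathcal{E}_i$ and its dissipation match the $\eps^{-1}u_i$ slot of $W$ with $\eps$-uniform constants — together with the fact that the pressure--velocity cancellation and the Poisson/continuity manipulation are \emph{exact}, which is exactly what keeps any power of $\mathscr{E}_T$ out of \eqref{L2estim}.
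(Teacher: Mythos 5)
Your proposal is correct and follows essentially the same route as the paper: the physical energy identity obtained by testing the momentum equations with $\eps^{-2}\rho_iu_i$ and $\rho_eu_e$, the same Poisson/continuity manipulation turning the coupling term into $-\tfrac12\tfrac{d}{dt}\|\D\vp\|^2$, and the same equivalence with $\|W\|^2$. The only cosmetic difference is that you normalize the pressure potential to the relative form $\Pi_\nu$ (quadratic at $\rho=1$) from the outset, whereas the paper works with $H_\nu$ satisfying $H_\nu'=h_\nu$ and removes the affine part afterwards via Taylor's formula and $\int_{\T^d}\dive(\cdot)\,dx=0$.
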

\begin{proof}The entropy and the corresponding entropy flux for the bipolar  Euler system in \eqref{main} are
\[
\begin{cases}
  \eta_0(\rho_\nu,u_\nu)=\dfrac{1}{2}\rho_e|u_e|^2+ H_e(\rho_e)+ \dfrac{1}{2\eps^2}\rho_i|u_i|^2+H_i(\rho_i),\\[2mm]
  \psi_0(\rho_\nu,u_\nu)=\dfrac{1}{2}\rho_e|u_e|^2u_e+ \rho_eh_e(\rho_e)u_e+\dfrac{1}{2\eps^2}\rho_i|u_i|^2u_i+ \rho_ih_i(\rho_i)u_i,
 \end{cases}
\]
in which $H_\nu^\prime(\rho)=h_\nu(\rho)$. Then we have
\begin{equation*}
 \pt \eta_0(\rho_\nu,u_\nu) +\dive \psi_0(\rho_\nu,u_\nu)+ \rho_e |u_e|^2+\dfrac{1}{\eps^2}\rho_i |u_i|^2 =-\D\vp(\rho_e u_e-\rho_i u_i).
\end{equation*}
Combining the Poisson equation and the conservative equations we have
\begin{eqnarray*}
-\D\vp(\rho_e u_e-\rho_i u_i)&=&-\dive(\vp(\rho_e u_e-\rho_i u_i))+\vp\dive(\rho_e u_e-\rho_i u_i)\nonumber\\
&=&-\dive(\vp(\rho_e u_e-\rho_i u_i))-\vp\pt(\rho_e-\rho_i)\nonumber\\
&=&-\dive(\vp(\rho_e u_e-\rho_i u_i))+\vp\pt\Delta\vp\nonumber\\
&=&-\dive(\vp(\rho_e u_e-\rho_i u_i))+\dive(\vp\pt\D\vp)-\dfrac{1}{2}\dfrac{d}{dt}|\D\vp|^2.\nonumber
\end{eqnarray*}
Thus we have
\begin{eqnarray}\label{wshed}
 &&\pt \left(\eta_0(\rho_\nu,u_\nu)+\dfrac{1}{2}|\D\vp|^2\right)\nonumber\\
 &&+\dive \left(\psi_0(\rho_\nu,u_\nu)+\vp(\rho_eu_e-\rho_iu_i-\pt\D\vp)\right)+ \rho_e |u_e|^2+\dfrac{1}{\eps^2}\rho_i |u_i|^2 =0.
\end{eqnarray}
In addition, by the Taylor's formula, we obtain
\[
H_\nu(\rho_\nu)=H_\nu(1)+h_\nu(1)N_\nu+ \dfrac{1}{2}h_\nu^\prime(\rho_\nu^*)
N_\nu^2,
\]
where $\rho_\nu^*$ is between $1$ and $\rho_\nu$. Since $\pt N_\nu=-\dive(\rho_\nu u_\nu)$, we have
\[
\pt H_\nu(\rho_\nu) =-h_\nu(1)\dive(\rho_\nu u_\nu)+ \dfrac{1}{2}\pt\left(h_\nu^\prime(\rho_\nu^*) N_\nu^2\right).
\]
Substituting the above equation to \eqref{wshed} and integrating the resulting equation over $\T^d$ and $[0,t]$, also noticing \eqref{lowerboundrho} yields \eqref{L2estim}. 
\end{proof}

\begin{lemma}\label{lemmahoestim} (Higher order estimates) It holds
\begin{eqnarray}\label{hoestim}
\|W(t)\|_s^2+\int_0^t\left(\|u_e(\tau)\|_s^2+\dfrac{1}{\eps^2}\|u_i(\tau)\|_s^2\right)d\tau\leq C\|W(0)\|_s^2+C\int_0^t\mathscr{E}_T^{1/2}\mathscr{D}(\tau)d\tau.
\end{eqnarray}
\end{lemma}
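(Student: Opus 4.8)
The plan is to apply $\pa_x^\al$ for $|\al|\le s$ to the symmetric hyperbolic system \eqref{startsymmglobal} written for each species $\nu=i,e$, then perform the standard $L^2$-energy estimate against the symmetrizer $A_\nu^0(\rho_\nu)$, and finally sum over $\nu$ and over $|\al|\le s$. Concretely, I would set $U_\nu^\al=\pa_x^\al U_\nu$, commute $\pa_x^\al$ through the equation to get
\[
\pt U_\nu^\al+\sum_{j=1}^d A_\nu^j(\rho_\nu,u_\nu)\pa_{x_j}U_\nu^\al=\pa_x^\al Q_\nu(u_\nu,\D\vp)+R_\nu^\al,
\]
where $R_\nu^\al=\sum_{j}\big(A_\nu^j\pa_{x_j}\pa_x^\al U_\nu-\pa_x^\al(A_\nu^j\pa_{x_j}U_\nu)\big)$ is the Moser-type commutator. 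Taking $\langle A_\nu^0 U_\nu^\al,\,U_\nu^\al\rangle$ and using symmetry of $\tilde A_\nu^j=A_\nu^0 A_\nu^j$, integration by parts converts the flux terms into $\langle(\pt A_\nu^0+\sum_j\pa_{x_j}\tilde A_\nu^j)U_\nu^\al,U_\nu^\al\rangle$; since $\pt A_\nu^0$ and $\pa_{x_j}\tilde A_\nu^j$ are bounded by first derivatives of $(\rho_\nu,u_\nu)$, which in turn are controlled by $\mathscr E_T^{1/2}$, this contributes $C\mathscr E_T^{1/2}\mathscr D(t)$ after summation (using that the energy norm of $U_\nu^\al$ is comparable to $\mathscr E(t)$ and that $\D\vp$ is a zeroth-order term by the Poincaré bound established before the lemma). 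The commutator $R_\nu^\al$ is handled by the Moser inequality $\|R_\nu^\al\|\le C\|\D(\rho_\nu,u_\nu)\|_\infty\|U_\nu\|_s+C\|\D(\rho_\nu,u_\nu)\|_{s}\|\D(\rho_\nu,u_\nu)\|_\infty$, again absorbed into $C\mathscr E_T^{1/2}\mathscr D(t)$.

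The key structural point, and the reason the damping terms in $Q_\nu$ are essential, is the sign of $\langle A_\nu^0\pa_x^\al Q_\nu,\pa_x^\al U_\nu\rangle$. For the velocity block this produces the good dissipation $-\|u_e\|_s^2-\eps^{-2}\|u_i\|_s^2$ (up to $\mathscr E_T^{1/2}\mathscr D(t)$ errors coming from the $\rho_\nu$-dependent weights in $A_\nu^0$), while the electric-field coupling terms $\mp\langle A_\nu^0\pa_x^\al(\D\vp),\pa_x^\al u_\nu\rangle$ must be shown to cancel when the two species are summed: using $A_i^0$'s lower block $\eps^{-2}\rho_i\mathbf I_d$ and $A_e^0$'s lower block $\rho_e\mathbf I_d$, the cross terms combine into $\langle\pa_x^\al\D\vp,\,\pa_x^\al(\rho_i\eps^{-2}u_i\cdot(-\eps^2)-\rho_e u_e)\rangle$-type expressions which, via $\pt\Delta\vp=\dive(\rho_e u_e-\rho_i u_i)$ and integration by parts in $t$, reduce to $\tfrac12\tfrac{d}{dt}\|\pa_x^\al\D\vp\|^2$ plus cubic remainders — exactly mirroring the mechanism of the $L^2$-estimate lemma but now at order $s$. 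Here one must be careful that the factor $\eps^2$ in the $i$-momentum equation and the factor $\eps^{-2}$ in $A_i^0$ conspire to keep all electric-coupling contributions $\eps$-uniform.

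I expect the main obstacle to be bookkeeping the $\eps$-weights so that every error term genuinely carries a full power of $\mathscr D(t)$ (not $\eps^{-2}\mathscr D$ or worse) and a full power of $\mathscr E_T^{1/2}$: the variable $W$ is normalized with $\eps^{-1}u_i$, so commutators and weight-variation terms involving $u_i$ must be traced to confirm they close against $\eps^{-2}\|u_i\|_s^2$ in $\mathscr D(t)$ rather than destroying uniformity. A secondary technical point is treating $\D\vp$ at top order: since only $\|\Delta\vp\|_{s-1}$, not $\|\D\vp\|_s$, sits in $\mathscr D(t)$, the $\D\vp$ appearing in $Q_\nu$ at order $|\al|=s$ should be paired with the dissipative $u_\nu$ factor (it multiplies $u_\nu$, which is dissipative) or moved to the time-derivative term via the Poisson equation, so no loss occurs. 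Once these weights are verified, summing over $\nu$ and $|\al|\le s$ and using the equivalence of $\|W(t)\|_s^2$ with $\sum_{\nu,\al}\langle A_\nu^0 U_\nu^\al,U_\nu^\al\rangle$ gives \eqref{hoestim} after integrating in time.
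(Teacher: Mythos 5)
Your proposal follows essentially the same route as the paper: the $\pa_x^\al$-energy estimate against the symmetrizer $A_\nu^0$, Moser-type bounds for the commutator and the $\dive A_\nu$ terms absorbed into $C\mathscr{E}_T^{1/2}\mathscr{D}(t)$, the damping in $Q_\nu$ providing $-\|u_e\|_s^2-\eps^{-2}\|u_i\|_s^2$, and the cross-species electric coupling $2\left<\pa_x^\al\D\vp,\rho_i\pa_x^\al u_i-\rho_e\pa_x^\al u_e\right>$ converted to $-\tfrac12\tfrac{d}{dt}\|\pa_x^\al\D\vp\|^2$ plus cubic commutators via the mass equations and the Poisson equation — exactly the paper's argument, including the observation that the $\eps^2$ in the ion forcing cancels the $\eps^{-2}$ in $A_i^0$. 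The approach is correct and matches the paper's proof.
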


\begin{proof} For a multi-index $\alpha\in\N^d$ with $1\leq |\alpha|\leq s$, applying $\pa_x^\al$ to both sides of \eqref{startsymmglobal}, and making the inner product of the resulting equation with $2A_\nu^0(\rho_\nu) \pa_x^\al U_\nu$ in $L^2(\T^d)$, we obtain
\begin{eqnarray}\label{starthigher}
\dfrac{d}{dt}\left<\pa_x^\al U_\nu, A_\nu^0(\rho_\nu)\pa_x^\al U_\nu\right> &=&\left<\dive A_\nu \pa_x^\al U_\nu, \pa_x^\al U_\nu\right> +2\left<J_\nu^\al, A_\nu^0\pa_x^\al U_\nu\right> +2\left<\pa_x^\al Q_\nu, A_\nu^0(\rho_\nu)\pa_x^\al U_\nu \right>\nonumber\\
&=&I_\nu^1+I_\nu^2+I_\nu^3,
\end{eqnarray}
with the natural correspondence of $I_\nu^1$, $I_\nu^2$ and $I_\nu^3$, and
in which
\begin{eqnarray*}
\dive A_\nu &=&\pt A_\nu^0(\rho_\nu)+\sum_{j=1}^d \pa_{x_j}\tilde{A}_\nu^j(\rho_\nu, u_\nu),\nonumber\\
J_\nu^\al&=&\sum_{j=1}^d(A_\nu^j(\rho_\nu,u_\nu)\pa_x^\al \pa_{x_j}U_\nu-\pa_x^\al (A_\nu^j(\rho_\nu,u_\nu)\pa_{x_j}U_\nu)).
\end{eqnarray*}

In what follows, we will estimate $I_\nu^1$, $I_\nu^2$ and $I_\nu^3$ term by term. First, since $\pt \rho_\nu=-\dive(\rho_\nu u_\nu)$, we have
\[
\|\pt \rho_\nu\|_{\infty}\leq C\|u_\nu\|_s.
\]
Since $\eps< 1$, it follows that
\begin{eqnarray*}
\left<\pt A_\nu^0(\rho_\nu)\pa_x^\al U_\nu, \pa_x^\al U_\nu\right> \leq\cubic.
\end{eqnarray*}
Similarly, for $1\leq j\leq d$,
\begin{eqnarray*}
\left<\pa_{x_j}\tilde{A}_\nu^j (\rho_\nu, u_\nu)\pa_x^\al U_\nu, \pa_x^\al U_\nu\right>\leq \cubic.
\end{eqnarray*}
Therefore
\begin{equation}\label{I1}
|I_\nu^1|\leq \cubic.
\end{equation}

For $I_\nu^2$, since $I_\nu^2=2\sum_{j=1}^d I_{\nu,j}^2$, with
\begin{eqnarray*}
I_{i,j}^2&=&\left<h^\prime_i(\rho_i)\left(\pa_x^\al(u_i^j\pa_{x_j}N_i)-u_i^j\pa_x^\al\pa_{x_j}N_i\right),\pa_x^\al N_i\right>\\
  &\quad&+\left<h^\prime_i(\rho_i)\left(\pa_x^\al(\rho_i\pa_{x_j}u_i^j)-\rho_i\pa_x^\al\pa_{x_j}u_i^j\right),\pa_x^\al N_i\right>\\
  &\quad&+\left<\rho_i\left(\pa_x^\al(h_i^\prime(\rho_i)\pa_{x_j}N_i)-h_i^\prime(\rho_i)\pa_x^\al\pa_{x_j}N_i\right),\pa_x^\al u_i^j\right>\\
    &\quad&+\eps^{-2}\left<\rho_i\left(\pa_x^\al(u_i^j\pa_{x_j}u_i)-u_i^j\pa_x^\al\pa_{x_j}u_i\right),\pa_x^\al u_i\right>,
\end{eqnarray*}
and
\begin{eqnarray*}
I_{e,j}^2&=&\left<h^\prime_e(\rho_e)\left(\pa_x^\al(u_e^j\pa_{x_j}N_e)-u_e^j\pa_x^\al\pa_{x_j}N_e\right),\pa_x^\al N_e\right>\\
  &\quad&+\left<h^\prime_e(\rho_e)\left(\pa_x^\al(\rho_e\pa_{x_j}u_e^j)-\rho_e\pa_x^\al\pa_{x_j}u_e^j\right),\pa_x^\al N_e\right>\\
  &\quad&+\left<\rho_e\left(\pa_x^\al(h_e^\prime(\rho_e)\pa_{x_j}N_e)-h_e^\prime(\rho_e)\pa_x^\al\pa_{x_j}N_e\right),\pa_x^\al u_e^j\right>\\
    &\quad&+\left<\rho_e\left(\pa_x^\al(u_e^j\pa_{x_j}u_e)-u_e^j\pa_x^\al\pa_{x_j}u_e\right),\pa_x^\al u_e\right>.
\end{eqnarray*}
Noticing $\nabla h_\nu^\prime(\rho_\nu)=h_\nu^{\prime\prime}(\rho_\nu)\nabla N_\nu$, by the Moser-type inequalities in Lemma 2.1, we have
\begin{eqnarray}\label{I2}
 |I_\nu^2|\leq \cubic.
\end{eqnarray}

For $I_\nu^3$, by using the Moser-type inequalities,  we obtain
\begin{eqnarray*}
I_i^3&=&2\left<\pa_x^\al \D\vp, \rho_i\pa_x^\al u_i\right>-\dfrac{2}{\eps^2}\left<\rho_i \pa_x^\al u_i, \pa_x^\al u_i\right>\nonumber\\
&\leq&2\left<\pa_x^\al \D\vp, \rho_i\pa_x^\al u_i\right>-\dfrac{1}{\eps^2}\|\pa_x^\al u_i\|^2,
\end{eqnarray*}
similarly
\begin{eqnarray*}
I_e^3&\leq&-2\left<\pa_x^\al \D\vp, \rho_e\pa_x^\al u_e\right>-\|\pa_x^\al u_e\|^2.
\end{eqnarray*}
Substituting \eqref{I1}, \eqref{I2} and the above two estimates into \eqref{starthigher}, and adding the resulting equation for $\nu=i,e$ yield
\begin{eqnarray}\label{firstpart}
&&\sum_{\nu=i,e}\dfrac{d}{dt}\left<\pa_x^\al U_\nu, A_\nu^0(\rho_\nu) \pa_x^\al U_\nu\right> +\dfrac{1}{\eps^2}\|\pa_x^\al u_i\|^2 +\|\pa_x^\al u_e\|^2\nonumber\\
&\leq&\cubic+2\left<\pa_x^\al \D\vp, \rho_i\pa_x^\al u_i-\rho_e \pa_x^\al u_e\right>.
\end{eqnarray}

Now we estimate $2\left<\pa_x^\al \D\vp, \rho_i\pa_x^\al u_i-\rho_e \pa_x^\al u_e\right>$. Since
\begin{eqnarray}
\left<\pa_x^\al \D\vp, \rho_i\pa_x^\al u_i-\rho_e \pa_x^\al u_e\right>&=&\left<\pa_x^\al\D\vp, \pa_x^\al(\rho_iu_i-\rho_eu_e)\right>-\sum_{\nu=i,e}\left<\pa_x^\al \D\vp, \pa_x^\al(\rho_\nu u_\nu)-\rho_\nu\pa_x^\al u_\nu\right>,\nonumber
\end{eqnarray}
in which 
\begin{eqnarray}
\left<\pa_x^\al\D\vp, \pa_x^\al(\rho_iu_i-\rho_eu_e)\right>&=&-\left<\pa_x^\al\vp, \dive\pa_x^\al(\rho_i u_i-\rho_eu_e)\right>\nonumber\\
&=&\left<\pa_x^\al\vp, \pt\pa_x^\al(\rho_i-\rho_e)\right>\nonumber\\
&=&\left<\pa_x^\al\vp, \pt\Delta\pa_x^\al\vp\right>\nonumber\\
&=&-\dfrac{1}{2}\dfrac{d}{dt}\|\pa_x^\al\D\vp\|^2,\nonumber
\end{eqnarray}
and
\[
\left|\left<\pa_x^\al \D\vp, \pa_x^\al(\rho_e u_e)-\rho_e\pa_x^\al u_e\right>\right|\leq C\|\pa_x^\al\D\vp\|\|\D N_e\|_{s-1}\|u_e\|_{s-1}\leq \cubic,
\]
\[
\left|\left<\pa_x^\al \D\vp, \pa_x^\al(\rho_i u_i)-\rho_i\pa_x^\al u_i\right>\right|\leq C\|\pa_x^\al\D\vp\|\|\D N_i\|_{s-1}\|\eps^{-1} u_i\|_{s-1}\leq \cubic.
\]
Substituting these into \eqref{firstpart}, adding the resulting equation for all $1\leq |\al|\leq s$ and integrating over $[0,t]$, we have
\begin{eqnarray*}
&&\sum_{1\leq|\al|\leq s}\left( \sum_{\nu=i,e} \left<\pa_x^\al U_\nu, A_\nu^0(\rho_\nu)\pa_x^\al U_\nu\right>+ \|\pa_x^\al\D\vp\|^2\right)+\int_0^t\left(\|u_e(\tau)\|_s^2+\dfrac{1}{\eps^{2}} \|u_i(\tau)\|_s^2\right)d\tau\nonumber\\
&\leq& C\|W(0)\|_s^2+C\int_0^t\mathscr{E}_T^{1/2}\mathscr{D}(\tau)d\tau.
\end{eqnarray*}
Noticing that there exists constants $c_a>0$ such that
\[
\left<\pa_x^\al U_\nu, A_\nu^0(\rho_\nu)\pa_x^\al U_\nu\right>+ \|\pa_x^\al\D\vp\|^2\geq c_a\|\pa_x^\al W\|^2,
\]
further combining \eqref{L2estim} yield \eqref{hoestim}. 
\end{proof}

\begin{lemma}\label{lemmadisestim}(Dissipation of $\nabla N_\nu$) It holds
\begin{eqnarray}\label{disestim}
&\,&\sum_{|\beta|\leq s-1}\dfrac{d}{dt}\left(\dfrac{1}{\eps^2}\left( \|\pa_x^\beta N_i\|^2+2\left<\pa_x^\beta (\rho_i u_i), \pa_x^\beta \nabla N_i \right>\right)+\|\pa_x^\beta N_e\|^2+2\left<\pa_x^\beta (\rho_e u_e), \pa_x^\beta \nabla N_e \right>\right)\nonumber\\
&\,&+\dfrac{h_1}{2}\|\nabla N_i\|_{s-1}^2+\dfrac{h_1}{2}\|\nabla N_e\|_{s-1}^2+\dfrac{1}{2}\|\Delta \vp\|_{s-1}^2\nonumber\\
&\leq &C\left\|\dfrac{u_i}{\eps}\right\|_s^2+C\|u_e\|_s^2+\cubic.
\end{eqnarray}

\end{lemma}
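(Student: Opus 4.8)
The plan is to derive the dissipation estimate for $\nabla N_\nu$ by testing the momentum equations against $\nabla N_\nu$ and using the strict monotonicity of $h_\nu$. Concretely, for a multi-index $\beta$ with $|\beta|\le s-1$, I would apply $\pa_x^\beta$ to the momentum equations in \eqref{main} and take the $L^2$ inner product with $\pa_x^\beta\nabla N_\nu$. The key term is $\langle \pa_x^\beta\nabla h_\nu(\rho_\nu),\pa_x^\beta\nabla N_\nu\rangle$; writing $\nabla h_\nu(\rho_\nu)=h_\nu'(\rho_\nu)\nabla N_\nu$ and peeling off the leading coefficient, this produces $\langle h_\nu'(\rho_\nu)\pa_x^\beta\nabla N_\nu,\pa_x^\beta\nabla N_\nu\rangle\ge h_1\|\pa_x^\beta\nabla N_\nu\|^2$ up to commutator terms that are of the form $\cubic$ by the Moser inequalities (Lemma 2.1) together with \eqref{lowerboundrho}. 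For the electron equation the Poisson coupling term $\langle\pa_x^\beta\nabla\vp,\pa_x^\beta\nabla N_e\rangle$ must be handled; integrating by parts it becomes $-\langle\pa_x^\beta\Delta\vp,\pa_x^\beta N_e\rangle=-\langle\pa_x^\beta(\rho_i-\rho_e),\pa_x^\beta N_e\rangle=\|\pa_x^\beta N_e\|^2-\langle\pa_x^\beta N_i,\pa_x^\beta N_e\rangle$, and for the ion equation the analogous term carries an $\eps^2$ prefactor which, after the $\eps^{-2}$ weighting, contributes a comparable $\|\Delta\vp\|_{s-1}^2$ piece; summing the ion and electron contributions and using $\rho_i-\rho_e=\Delta\vp$ is what generates the $\tfrac12\|\Delta\vp\|_{s-1}^2$ on the left.

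Next I would deal with the time-derivative terms. The term $\langle\pa_x^\beta\pt u_\nu,\pa_x^\beta\nabla N_\nu\rangle$ is rewritten as $\tfrac{d}{dt}\langle\pa_x^\beta u_\nu,\pa_x^\beta\nabla N_\nu\rangle-\langle\pa_x^\beta u_\nu,\pa_x^\beta\nabla\pt N_\nu\rangle$, and the last inner product is transformed using the mass equation $\pt N_\nu=-\dive(\rho_\nu u_\nu)$: integrating by parts moves the gradient onto $u_\nu$, giving $\langle\pa_x^\beta\dive u_\nu,\pa_x^\beta\dive(\rho_\nu u_\nu)\rangle$, which is bounded by $C\|u_\nu\|_s^2$ (with the appropriate $\eps^{-2}$ weight in the ion case) plus cubic remainders. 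This is why the right-hand side of \eqref{disestim} carries $C\|u_i/\eps\|_s^2+C\|u_e\|_s^2$. To make the structure match the stated form exactly, rather than pairing with $\pa_x^\beta u_\nu$ I would pair with $\pa_x^\beta(\rho_\nu u_\nu)$: the leading part still produces the monotonicity gain, while the time-derivative term directly becomes $\tfrac{d}{dt}\langle\pa_x^\beta(\rho_\nu u_\nu),\pa_x^\beta\nabla N_\nu\rangle$ minus $\langle\pt\pa_x^\beta(\rho_\nu u_\nu),\pa_x^\beta\nabla N_\nu\rangle$, and the latter combines with the mass equation to yield, after integration by parts, $-\|\pa_x^\beta\pt N_\nu\|^2 + \cdots$; matching this against the displayed left-hand side, where $\tfrac{d}{dt}\|\pa_x^\beta N_\nu\|^2$ appears, is obtained by noting $\tfrac{d}{dt}\langle \pa_x^\beta N_\nu,\pa_x^\beta N_\nu\rangle = 2\langle \pa_x^\beta N_\nu,\pa_x^\beta\pt N_\nu\rangle$ and $\pt N_\nu=-\dive(\rho_\nu u_\nu)$, producing exactly the cross term $\langle\pa_x^\beta(\rho_\nu u_\nu),\pa_x^\beta\nabla N_\nu\rangle$ after integration by parts. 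So the left-hand side of \eqref{disestim} is precisely the combination that arises naturally; one then sums over $|\beta|\le s-1$, adds the $\eps^{-2}$-weighted ion identity to the electron identity, and collects terms.

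The remaining convective and damping terms are routine: $(u_\nu\cdot\nabla)u_\nu$ contributes $\cubic$ after commuting $\pa_x^\beta$ past it and using $\mathscr{E}_T$ smallness, and the damping term $-u_\nu$ paired with $\pa_x^\beta\nabla N_\nu$ gives, after integration by parts, $\langle\pa_x^\beta u_\nu,\pa_x^\beta\dive(\rho_\nu u_\nu)\rangle$-type expressions again absorbed into $C\|u_\nu\|_s^2$ plus cubic terms; the factor $\eps^2$ in front of $h_i$ and $\D\vp$ in the ion momentum equation, once divided by the $\eps^{-2}$ weight, keeps all ion contributions uniform in $\eps$. The main obstacle I anticipate is bookkeeping the $\eps$ powers consistently: the ion momentum equation has $\eps^2$ in front of both $\nabla h_i$ and $\D\vp$, so the natural test function and weighting must be chosen so that the good term $h_1\eps^{-2}\cdot\eps^2\|\nabla N_i\|_{s-1}^2 = h_1\|\nabla N_i\|_{s-1}^2$ survives at $O(1)$ while the error terms involving $u_i$ are controlled by $\|u_i/\eps\|_s^2$ and not something larger; getting the factor on the $\Delta\vp$ dissipation to come out as exactly $\tfrac12$ (rather than being lost or appearing with a bad sign) requires carefully combining the ion and electron Poisson terms and absorbing $-\langle\pa_x^\beta N_i,\pa_x^\beta N_e\rangle$ via Young's inequality into the $h_1\|\nabla N_\nu\|_{s-1}^2$ terms (using Poincaré, since $N_\nu$ has the same Sobolev control as $\nabla N_\nu$ up to the zero-mode, which is itself controlled by $\Delta\vp$). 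Everything else is a standard energy computation.
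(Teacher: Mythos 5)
Your proposal is correct and follows essentially the same route as the paper's proof: apply $\pa_x^\beta$ to the momentum equations, pair with $\pa_x^\beta\nabla N_\nu$, extract the coercive term $h_1\|\pa_x^\beta\nabla N_\nu\|^2$ from the monotonicity of the pressure, use the mass equation to turn the time-derivative and damping terms into $\frac{d}{dt}\left<\pa_x^\beta(\rho_\nu u_\nu),\pa_x^\beta\nabla N_\nu\right>$ and $\frac{d}{dt}\|\pa_x^\beta N_\nu\|^2$ plus $C\|u_\nu\|_s^2$, and combine the two species' electric-force terms through $\Delta\vp=\rho_i-\rho_e$ to produce the $\|\Delta\vp\|_{s-1}^2$ dissipation. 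One correction to your closing paragraph: the summed Poisson contribution is, at leading order, exactly the perfect square $\|\pa_x^\beta(N_i-N_e)\|^2=\|\pa_x^\beta\Delta\vp\|^2$ (the paper's conservative-form version gives $\left<(\rho_i+\rho_e)\pa_x^\beta\Delta\vp,\pa_x^\beta\Delta\vp\right>\geq\|\pa_x^\beta\Delta\vp\|^2$ with the remainder absorbed as $\cubic$), so no Young-inequality absorption of $-\left<\pa_x^\beta N_i,\pa_x^\beta N_e\right>$ into the $h_1\|\nabla N_\nu\|_{s-1}^2$ terms is needed; attempting that absorption would in fact cancel the $\Delta\vp$ dissipation entirely rather than recover the stated $\tfrac{1}{2}\|\Delta\vp\|_{s-1}^2$.
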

\begin{proof}For a multi-index $\beta\in\N^d$ with $|\beta|\leq s-1$, multiplying $\eps^{-2}\rho_i$ to both sides of the momentum equation of ions in \eqref{start}, and applying $\pa_x^\beta$ to the resulting equation, we have
\begin{eqnarray*}
p_i^\prime(\rho_i)\pa_x^\beta \nabla N_i-\pa_x^\beta (\rho_i\D\vp) &=&-\eps^{-2}\pt\pa_x^\beta(\rho_i u_i)-\eps^{-2}\pa_x^\beta(\dive(\rho_i u_i\otimes u_i))-\eps^{-2}\pa_x^\beta (\rho_iu_i)\nonumber\\
&\,&+\left(p_i^\prime(\rho_i)\pa_x^\beta \nabla N_i-\pa_x^\beta(p_i^\prime(\rho_i)\nabla N_i)\right).
\end{eqnarray*}
Taking the inner product of the above equation with $\pa_x^\beta\nabla N_i$ in $L^2(\T^d)$ yields
\begin{eqnarray}\label{dissstart}
&\,&\left<p_i^\prime(\rho_i)\pa_x^\beta \nabla N_i, \pa_x^\beta\nabla N_i\right>
-\left<\pa_x^\beta (\rho_i\D\vp) , \pa_x^\beta\nabla N_i\right>\nonumber\\
&=&-\left<\eps^{-2}\pt\pa_x^\beta(\rho_i u_i), \pa_x^\beta\nabla N_i\right>-\left<\eps^{-2}\pa_x^\beta(\dive(\rho_i u_i\otimes u_i)), \pa_x^\beta\nabla N_i\right>
-\left<\eps^{-2}\pa_x^\beta (\rho_iu_i), \pa_x^\beta\nabla N_i\right>\nonumber\\
&\,&+\left<\left(p_i^\prime(\rho_i)\pa_x^\beta \nabla N_i
-\pa_x^\beta(p_i^\prime(\rho_i)\nabla N_i)\right), \pa_x^\beta\nabla N_i\right>.
\end{eqnarray}
Now we treat each term in the above equation. First, the estimate for the term containing $\pa_x^\beta(\rho_i \D\vp)$ on the left hand side requires a little more calculations. Let us first remark that
\begin{eqnarray*}
\left<\pa_x^\beta (\rho_i \D\vp), \pa_x^\beta \nabla N_i\right>
&=&\left<\rho_i \pa_x^\beta \D\vp, \pa_x^\beta \nabla N_i\right>+\left<\pa_x^\beta(\rho_i \D\vp)-\rho_i \pa_x^\beta \D\vp, \pa_x^\beta \nabla N_i\right>,
\end{eqnarray*}
in which by the Moser-type inequalities, we have
\[
\left|\left<\pa_x^\beta(\rho_i \D\vp)-\rho_i \pa_x^\beta \D\vp, \pa_x^\beta \nabla N_i\right>\right|\leq C\|\D\vp\|_{s-1}\|\nabla N_i\|_{s-1}^2\leq \cubic,
\]
and
\begin{eqnarray*}
\left<\rho_i \pa_x^\beta \D\vp, \pa_x^\beta \nabla N_i\right>
&=&\dfrac{1}{2}\left<\pa_x^\beta \D\vp, \pa_x^\beta(\nabla (\rho_i)^2)\right>-\left<\pa_x^\beta \D\vp, \pa_x^\beta(\rho_i\nabla \rho_i)-\rho_i\pa_x^\beta \nabla \rho_i\right>\nonumber\\
&\leq& \dfrac{1}{2}\left<\pa_x^\beta \D\vp, \pa_x^\beta(\nabla (\rho_i)^2)\right>+\cubic.
\nonumber
\end{eqnarray*}
Hence,
\begin{equation*}
\left<\pa_x^\beta (\rho_i \D\vp), \pa_x^\beta \nabla N_i\right>\leq \dfrac{1}{2}\left<\pa_x^\beta \D\vp, \pa_x^\beta(\nabla (\rho_i)^2)\right>+\cubic.
\end{equation*}
Moreover, noticing \eqref{lowerboundrho}, we have
\[
\left<p_i^\prime(\rho_i)\pa_x^\beta \nabla N_i, \pa_x^\beta \nabla N_i\right> \geq \dfrac{h_1}{2}\|\pa_x^\beta \nabla N_i\|^2.
\]

Let us move on to the right hand side of \eqref{dissstart}. An integration by parts gives
\begin{eqnarray*}
\eps^{-2} \left<\pt\pa_x^\beta (\rho_iu_i), \pa_x^\beta \nabla N_i\right> &=& \eps^{-2}\dfrac{d}{dt}\left<\pa_x^\beta (\rho_i u_i), \pa_x^\beta \nabla N_i \right>-\eps^{-2}\left<\pa_x^\beta \dive (\rho_i u_i), \pa_x^\beta \dive(\rho_iu_i)\right>\nonumber\\
&=&\eps^{-2}\dfrac{d}{dt}\left<\pa_x^\beta (\rho_i u_i), \pa_x^\beta \nabla N_i \right> -\dfrac{1}{\eps^2}\left\|\pa_x^\beta\dive(\rho_i u_i)\right\|^2.
\end{eqnarray*}
By applying the Young's inequality, the Cauchy-Schwarz inequality and the Moser-type inequalities, it is obvious that
\begin{eqnarray*}
\eps^{-2}|\left<\pa_x^\beta(\dive(\rho_i u_i\otimes u_i)),\pa_x^\beta \nabla N_i\right>|&\leq& C\|\nabla N_i\|_{s-1}\left\|\dfrac{u_i}{\eps}\right\|_s^2\leq \cubic ,\nonumber\\
|\left<p_i^\prime(\rho_i)\pa_x^\beta \nabla N_i-\pa_x^\beta(p_i^\prime(\rho_i)\nabla N_i),\pa_x^\beta \nabla N_i\right>|&\leq& C \|\nabla N_i\|_{s-1}^3\leq \cubic.
\end{eqnarray*}
Next, using the mass equation of ions in \eqref{main}, we have
\[
\eps^{-2}\left<\pa_x^\beta (\rho_i u_i),\pa_x^\beta \nabla N_i\right> =-\eps^{-2}\left<\pa_x^\beta \dive(\rho_i u_i), \pa_x^\beta N_i \right>=\eps^{-2}\left<\pt\pa_x^\beta N_i, \pa_x^\beta N_i\right>=\dfrac{1}{2\eps^2}\dfrac{d}{dt}\|\pa_x^\beta N_i\|^2.
\]

Thus, combining all these estimates, we have
\begin{eqnarray}\label{idiss}
&\,& \dfrac{1}{\eps^2}\dfrac{d}{dt}\left( \|\pa_x^\beta N_i\|^2+2\left<\pa_x^\beta (\rho_i u_i), \pa_x^\beta \nabla N_i \right>\right)+ h_1\|\pa_x^\beta\nabla N_i\|^2-\left<\pa_x^\beta \D\vp, \pa_x^\beta(\nabla (\rho_i)^2)\right>\nonumber\\
&\leq& C\left\|\dfrac{u_i}{\eps}\right\|_s^2+\cubic.
\end{eqnarray}
Similarly, applying the same procedure as above to the momentum equation for electrons, we obtain the similar estimate for electrons:
\begin{eqnarray}\label{ediss}
&\,& \dfrac{d}{dt}\left( \|\pa_x^\beta N_e\|^2+2\left<\pa_x^\beta (\rho_e u_e), \pa_x^\beta \nabla N_e \right>\right)+ h_1\|\pa_x^\beta\nabla N_e\|^2+\left<\pa_x^\beta \D\vp, \pa_x^\beta(\nabla (\rho_e)^2)\right>\nonumber\\
&\leq& C\left\|{u}_e\right\|_s^2+C\mathscr{E}_T^{1/2}\mathscr{D}(t).
\end{eqnarray}
Now it remains to estimate the term $\left<\pa_x^\beta \D\vp, \pa_x^\beta \nabla ((\rho_i)^2-(\rho_e)^2)\right>$. Since
\[
\pa_x^\beta \Delta \vp= \pa_x^\beta \rho_i-\pa_x^\beta \rho_e,
\]
then
\begin{eqnarray*}
\left<\pa_x^\beta \D\vp, \pa_x^\beta \nabla ((\rho_i)^2-(\rho_e)^2)\right>
&=&-\left<\pa_x^\beta \Delta \vp, \pa_x^\beta ((\rho_i-\rho_e)(\rho_i+\rho_e))\right>\nonumber\\
&=&-\left<(\rho_i+\rho_e)\pa_x^\beta \Delta \vp, \pa_x^\beta \Delta \vp\right>\nonumber\\
&\,&-\left<\pa_x^\beta \Delta \vp, \pa_x^\beta ((\rho_i-\rho_e)(\rho_i+\rho_e))-(\rho_i+\rho_e)\pa_x^\beta(\rho_i-\rho_e)\right>,
\end{eqnarray*}
in which noticing \eqref{lowerboundrho}, we have
\[
\left<(\rho_i+\rho_e)\pa_x^\beta \Delta \vp, \pa_x^\beta \Delta \vp\right>\geq \|\pa_x^\beta \Delta \vp \|^2,
\]
and by the Cauchy-Schwarz inequality and the Moser-type inequalities, we have
\begin{eqnarray*}
&\,&|\left<\pa_x^\beta \Delta \vp, \pa_x^\beta ((\rho_i-\rho_e)(\rho_i+\rho_e))-(\rho_i+\rho_e)\pa_x^\beta(\rho_i-\rho_e)\right>|\nonumber\\
&\leq& C\|\pa_x^\beta \Delta \vp\|\|\nabla \rho_i+\nabla \rho_e\|_{s-1} \|\rho_i-\rho_e\|_{s-1}\nonumber\\
&\leq &\dfrac{1}{2}\|\pa_x^\beta \Delta \vp\|^2+\cubic.
\end{eqnarray*}
Hence, combining these estimates, we have
\[
\left<\pa_x^\beta \D\vp, \pa_x^\beta \nabla ((\rho_i)^2-(\rho_e)^2)\right>\leq -\dfrac{1}{2}\|\pa_x^\beta \Delta \vp\|^2+C\mathscr{E}_T^{1/2}\mathscr{D}(t).
\]

Adding \eqref{idiss} and \eqref{ediss}, and combining the above estimate, we have
\begin{eqnarray*}
&\,&\dfrac{1}{\eps^2}\dfrac{d}{dt}\left( \|\pa_x^\beta N_i\|^2+2\left<\pa_x^\beta (\rho_iu_i), \pa_x^\beta \nabla N_i \right>\right)+\dfrac{d}{dt}\left( \|\pa_x^\beta N_e\|^2+2\left<\pa_x^\beta (\rho_e u_e), \pa_x^\beta \nabla N_e \right>\right)\nonumber\\
&\,&+\dfrac{h_1}{2}\|\pa_x^\beta\nabla N_i\|^2+\dfrac{h_1}{2}\|\pa_x^\beta\nabla N_e\|^2+\dfrac{1}{2}\|\pa_x^\beta\Delta \vp\|^2\nonumber\\
&\leq &C\left\|\dfrac{u_i}{\eps}\right\|_s^2+C\|u_e\|_s^2+C\mathscr{E}_T^{1/2}\mathscr{D}(t).
\end{eqnarray*}
Adding the above for all $|\beta|\leq s-1$ yields \eqref{disestim}. 
\end{proof}

\begin{lemma}For $\forall\, t>0$, it holds
 \begin{equation}\label{finestimbf}
\mathscr{E}(t)+\int_0^t \mathscr{D}(\tau)d\tau\leq C\mathscr{E}(0),
\end{equation}
\end{lemma}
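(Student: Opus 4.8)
The plan is to merge the three preceding lemmas into one closed inequality. I would integrate the differential inequality \eqref{disestim} of Lemma \ref{lemmadisestim} over $[0,t]$ and add to \eqref{hoestim} a small multiple $\kappa>0$ of the result, so that the natural Lyapunov functional is
\[
\mathscr{L}(t)=\|W(t)\|_s^2+\kappa\!\!\sum_{|\beta|\leq s-1}\!\!\left(\frac{1}{\eps^2}\Big(\|\pa_x^\beta N_i\|^2+2\langle\pa_x^\beta(\rho_iu_i),\pa_x^\beta\nabla N_i\rangle\Big)+\|\pa_x^\beta N_e\|^2+2\langle\pa_x^\beta(\rho_eu_e),\pa_x^\beta\nabla N_e\rangle\right),
\]
while the left side also produces the dissipation $\mathscr{R}(t):=(1{-}C\kappa)\int_0^t(\|u_e\|_s^2+\eps^{-2}\|u_i\|_s^2)\,d\tau+\tfrac{\kappa h_1}{2}\int_0^t(\|\nabla N_i\|_{s-1}^2+\|\nabla N_e\|_{s-1}^2)\,d\tau+\tfrac{\kappa}{2}\int_0^t\|\Delta\vp\|_{s-1}^2\,d\tau$, where the factor $1-C\kappa$ comes from absorbing the $C\kappa\int_0^t(\eps^{-2}\|u_i\|_s^2+\|u_e\|_s^2)\,d\tau$ contributed by the right side of \eqref{disestim} into the integral $\int_0^t(\|u_e\|_s^2+\eps^{-2}\|u_i\|_s^2)\,d\tau$ already present in \eqref{hoestim}.

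The first step is to verify that $\mathscr{L}(t)$ is equivalent to $\mathscr{E}(t)$ once $\kappa$ and $\mathscr{E}_T$ are small. The genuinely delicate contribution is the $\eps^{-2}$-weighted cross term, because $\mathscr{E}(t)$ carries the weight $\eps^{-2}$ only on $\|N_i\|_{s-1}$, not on $\|N_i\|_s$. I would therefore integrate by parts, $\langle\pa_x^\beta(\rho_iu_i),\pa_x^\beta\nabla N_i\rangle=-\langle\pa_x^\beta\dive(\rho_iu_i),\pa_x^\beta N_i\rangle$, use the Moser estimate $\|\dive(\rho_iu_i)\|_{s-1}\leq C\|u_i\|_s$ (valid since $s>\tfrac d2+1$ and $\rho_i$ is bounded), and apply Young's inequality to get $\tfrac{2\kappa}{\eps^2}|\langle\cdots\rangle|\leq\tfrac{\kappa}{2\eps^2}\|N_i\|_{s-1}^2+\tfrac{C\kappa}{\eps^2}\|u_i\|_s^2$; the first half is absorbed by the $\tfrac{\kappa}{\eps^2}\|N_i\|_{s-1}^2$ already present in $\mathscr{L}$, the second by $\|W\|_s^2\supset\eps^{-2}\|u_i\|_s^2$ for $\kappa$ small. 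The electron cross term is handled identically, without the weight. This yields $c\,\mathscr{E}(t)\leq\mathscr{L}(t)\leq C\,\mathscr{E}(t)$, and in particular $|\mathscr{L}(0)|\leq C\mathscr{E}(0)$; moreover $\mathscr{R}(t)\geq c\int_0^t\mathscr{D}(\tau)\,d\tau$ once $C\kappa\leq\tfrac12$, since $\mathscr{D}=\sum_\nu\|\nabla N_\nu\|_{s-1}^2+\eps^{-2}\|u_i\|_s^2+\|u_e\|_s^2$.

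With these equivalences, the combined estimate $\mathscr{L}(t)+\mathscr{R}(t)\leq C\|W(0)\|_s^2+C|\mathscr{L}(0)|+C\int_0^t\mathscr{E}_T^{1/2}\mathscr{D}(\tau)\,d\tau$ becomes, after bounding the initial terms by $C\mathscr{E}(0)$ and pulling the constant $\mathscr{E}_T$ out of the integral,
\[
\mathscr{E}(t)+\int_0^t\mathscr{D}(\tau)\,d\tau\leq C\mathscr{E}(0)+C\mathscr{E}_T^{1/2}\int_0^t\mathscr{D}(\tau)\,d\tau.
\]
Since $\mathscr{E}_T$ is assumed uniformly (in $T$ and $\eps$) small, taking it small enough that $C\mathscr{E}_T^{1/2}\leq\tfrac12$ lets me absorb the last term into the left side and conclude \eqref{finestimbf} on $[0,T]$. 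Finally, the standard continuation argument — choosing $\delta$ in Theorem \ref{Thm2.2} so that $\mathscr{E}(0)\leq C\delta^2$ stays strictly below the smallness threshold imposed on $\mathscr{E}_T$, so that \eqref{finestimbf} forbids $\mathscr{E}(t)$ from ever reaching that threshold — extends the estimate, hence the local solution of Proposition \ref{localexistence}, to all $t>0$, which is precisely the content of \eqref{finestim}.

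The step I expect to be the main obstacle is the equivalence $\mathscr{L}(t)\sim\mathscr{E}(t)$: controlling the $\eps^{-2}$-weighted cross term $\tfrac{1}{\eps^2}\langle\pa_x^\beta(\rho_iu_i),\pa_x^\beta\nabla N_i\rangle$ with only the $\eps^{-2}\|N_i\|_{s-1}^2$ that $\mathscr{E}$ supplies forces the integration by parts above and a careful bookkeeping of which error terms are absorbed by the small constant $\kappa$ and which by the smallness of $\mathscr{E}_T$.
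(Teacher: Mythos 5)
Your proposal is correct and follows essentially the same route as the paper: the paper likewise forms the combination \eqref{hoestim}$+\kappa$\eqref{disestim}, defines an equivalent Lyapunov functional $\mathbb{E}(t)$ containing the same $\eps^{-2}$-weighted cross terms $\langle\pa_x^\beta(\rho_iu_i),\pa_x^\beta\nabla N_i\rangle$, absorbs the $C\kappa$-dissipation and the cubic term via smallness of $\kappa$ and $\mathscr{E}_T$, and closes with the equivalence $\mathbb{E}\sim\mathscr{E}$ and a bootstrap. Your explicit integration-by-parts treatment of the cross term only spells out the ``straightforward calculation'' the paper leaves implicit.
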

\begin{proof} Now let us define the following
\begin{eqnarray*}
\mathbb{E}(t)&=&\kappa\eps^{-2}\| N_i\|_{s-1}^2+\kappa\| N_e\|_{s-1}^2+\|\nabla \varphi\|^2_s\\
&\,&+ \int_{\T^d}\left(\rho_e|u_e|^2+\sum_{\nu=i,e}h^\prime_\nu(\rho_\nu^*)N_\nu^2+\dfrac{1}{\eps^2}\rho_i|u_i|^2\right)dx\\
&\,&+\sum_{\nu=i,e} \sum_{1\leq |\al| \leq s} \left\langle \pa_x^\al U_\nu, A_\nu^0 \pa_x^\al U_\nu \right\rangle\\
&\,&+\sum_{|\beta|\leq s-1}2\kappa\left(\eps^{-2}\left<\pa_x^\beta (\rho_i u_i), \pa_x^\beta \nabla N_i \right>+\left<\pa_x^\beta (\rho_e u_e), \pa_x^\beta \nabla N_e \right>\right),\nonumber
\end{eqnarray*}
and
\begin{eqnarray*}
\mathbb{D}(t)&=&\|u_e\|_s^2+\dfrac{1}{\eps^{2}} \|u_i\|_s^2+\kappa h_1\|\nabla N_i\|_{s-1}^2+\kappa h_1\|\nabla N_e\|_{s-1}^2+\kappa \|\Delta \vp\|_{s-1}^2,
\end{eqnarray*}
where $\kappa>0$ is a small constant to be determined later.  Integrating \eqref{disestim} over $[0,t]$ and using Lemma \ref{lemmahoestim}-\ref{lemmadisestim}, adding the two resulting estimates in the way \eqref{hoestim}+$\kappa$\eqref{disestim}, we have
\[
\mathbb{E}(t)+ \int_0^t\mathbb{D}(\tau)d\tau\leq \mathbb{E}(0)+C\kappa\int_0^t\left(\|u_e(\tau)\|_s^2+\dfrac{1}{\eps^{2}} \|u_i(\tau)\|_s^2d\tau\right)+ C\int_0^t\mathscr{E}_T^{1/2}\mathscr{D}(\tau)d\tau.
\]
Since  $\mathbb{D}(t)$ is equivalent to $\mathscr{D}(t)$, and $\mathscr{E}_T$ is sufficiently small, a straightforward calculation implies
that there exists a positive constant $\kappa_1>0$, such that
\begin{equation}\label{fin}
\mathbb{E}(t)+ \kappa_1\int_0^t \mathscr{D}(\tau)d\tau\leq \mathbb{E}(0),
\end{equation}
provided that $\kappa$ is chosen to be sufficiently small. Since $\mathbb{E}(t)$ is equivalent to $\mathscr{E}(t)$ as $A_\nu^0$ is symmetric and positive-definite. Thus, from \eqref{fin}, we have \eqref{finestimbf}, which also implies the uniform global existence of solution to \eqref{main} by a bootstrap argument. Finally, by the Poincar\'e inequality,
\[
\|\D\vp\|\leq C\|\D^2\vp\|\leq C\|\dive \D\vp\|,
\]
which gives the uniform estimate \eqref{finestim}.
\end{proof}

\noindent \textbf{Proof of Theorem \ref{Thm2.3}.} The uniform estimate \eqref{finestimbf} implies that
\[
u_i^\eps \longrightarrow 0, \,\,\,\text{strongly in}\,\,\,C([0,T];H^s), \quad \forall \,T>0.
\]
Besides, we obtain that the sequences $(\rho_\nu^\eps-1)_{\eps>0}$,  $(u_e^\eps)_{\eps>0}$ and $(\D\vp^\eps)_{\eps>0}$ are uniformly bounded in $L^\infty(\R^+;H^s)$. It follows that there exist functions $\bar{\rho_\nu},\bar{u}_e$ and $\bar{\varphi}$, such that as $\eps\rightarrow 0$, \eqref{con2} holds. This allows us to pass the limit in the mass and momentum equations for ions in the sense of distributions. In particular, we have
\begin{eqnarray*}
\pt \rho_i &\to& \pt \bar{\rho}_i, \\
\dive(\rho_i^\eps u_i^\eps)&\to& 0,
\end{eqnarray*}
which implies $\pt \bar{\rho}_i=0.$ Thus, $\bar{\rho}_i$ is a function that depends only on the space variable $x$.

Moreover, noticing $\eps<1$, $(\pt\rho_e^\eps)_{\eps>0}$ and $(\pt u_e^\eps)_{\eps>0}$ are uniformly bounded in  $L^\infty(\R^+; H^{s-1})$,  by a classical compactness theorem \cite{Simon1987}, $(\rho_e^\eps)_{\eps>0}$ and $(u_e^\eps)_{\eps>0}$ are relatively compact in $C([0,T];H_{loc}^{s_1})$, for all $s_1\in(0,s)$. As a consequence, as $\eps\rightarrow 0$, up to subsequences,
\begin{equation}\label{conver1}
(\rho_e^\eps, u_e^\eps)\rightarrow (\bar{\rho}_e,\bar{u}_e), \quad \text{strongly in }\,\, C([0,T];H_{loc}^{s_1}).
\end{equation}
As a result, it is sufficient for us to pass the limit in the mass and momentum equations for electrons, as well as the Poisson equations in \eqref{main} in the sense of distributions, of which the limiting system is the usual unipolar Euler-Poisson system for electrons \eqref{efinal}. \hfill $\square$

\section{Global convergence rate}
In this section, we study the global-in-time convergence rate problems. For simplicity, the superscript $\eps$ is dropped. In this section, we set $d=3$. Let $(\rho_i, u_i, \rho_e, u_e, \D\vp)$ be the solution to bipolar Euler-Poisson system \eqref{main}, and let their corresponding limit as $(\bar{\rho}_i(x),0,\bar{\rho}_e, \bar{u}_e, \D\bar{\vp})$, in which $(\bar{\rho}_e, \bar{u}_e, \D\bar{\vp})$ satisfies the unipolar Euler-Poisson system for electrons \eqref{efinal}. 

The complete proof has a total of three steps. First, we introduce $\bar{u}_i$ as the limit of $\eps^{-2}u_i$ to construct an auxiliary equation as the substitute for the missing limiting equation of ion. Then we apply the anti-symmetric structure of the error system to establish some preliminary conclusions. Finally, the stream function method is applied to establish the dissipative estimate for $(\rho_i-\bar{\rho_i})$ and $(\rho_e-\bar{\rho_e})$, thus the energy estimate is closed combining with former conclusions.

\subsection{The Limiting Equations} At the beginning, we first list some estimates and some facts about the limiting equations. First, under the conditions in Theorems \ref{Thm2.2} and \ref{thm4.1}, we have $\bar{\rho}_i(x)=1$. Indeed, 
\[
    \|\bar{\rho}_i(x)-1\|_{s-1}\ \leq \|\bar{\rho}_i(x)-\rho_{i,0}\|_{s-1}+\|\rho_{i,0}-1\|_{s-1} \leq C \eps,
\]
for all $\eps\in(0,1]$. Based on this, $(\bar{\rho}_e, \bar{u}_e, \D\bar{\vp})$  is in a neighbourhood of a constant equilibrium state. Thus the limit system \eqref{efinal} becomes
\begin{equation}\label{efinal2}
\begin{cases}
\pt\bar{\rho}_e +\dive(\bar{\rho}_e \bar{u}_e)=0,\\
\pt \bar{u}_e +(\bar{u}_e\cdot\nabla)\bar{u}_e=-\D\bar{\vp}-\bar{u}_e,\\
\Delta \bar{\vp}=1-\bar{\rho}_e,
\end{cases}
\end{equation}
with the same initial condition \eqref{efinalinitial}. It is clear that there are rich literatures on this system. Indeed, we have the following proposition.
\begin{proposition}\label{danjiep}(Estimates for equations for electrons, see \cite{Hsiao2003,Peng2015}) Let $s\geq 3$ be an integer. There exists positive constants $C$ and $\delta$ such that when 
\[
\|\bar{\rho}_{e,0}-1\|_s+\|\bar{u}_{e,0}\|_s\leq \delta,
\]
then there exists a unique global smooth solution $(\bar{\rho}_e, \bar{u}_e, \D\bar{\vp})$  to \eqref{efinal} satisfying
\begin{eqnarray}\label{eregu}
&\quad& \|\bar{\rho}_e(t)-1\|_s^2+\|\bar{u}_e(t)\|_s^2+\|\D\bar{\vp}(t)\|_s^2+\int_0^t\|\bar{\rho}_e(\tau)-1\|_s^2+\|\bar{u}_e(\tau)\|_s^2+\|\D\bar{\vp}(\tau)\|_s^2d\tau\nonumber\\
&\leq& C(\|\bar{\rho}_{e,0}-1\|_s^2+\|\bar{u}_{e,0}\|_s^2).
  \end{eqnarray}
Moreover, there exists a positive constant $q>0$, such that
\[
\|\bar{\rho}_e(t)-1\|_s^2+\|\bar{u}_e(t)\|_s^2+\|\D\bar{\vp}(t)\|_s^2\leq C(\|\bar{\rho}_{e,0}-1\|_s^2+\|\bar{u}_{e,0}\|_s^2)e^{-qt},
\]
which implies that the solution admits an exponential decay when $t\to +\infty$. 
\end{proposition}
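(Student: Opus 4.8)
\noindent\textbf{Proof strategy for Proposition \ref{danjiep}.}
Since \eqref{efinal2} is the electron block of \eqref{main} with constant background density $1$ and no coupling to ions, the plan is to reproduce, in a lighter form, the scheme of Section 3 (as in \cite{Hsiao2003,Peng2015}). First I would pass to the unknowns $\bar N_e=\bar\rho_e-1$, $\bar u_e$ and write \eqref{efinal2} as a symmetrizable hyperbolic system, symmetrized by $A_e^0(\bar\rho_e)$, with $\D\bar\vp$ regarded as a zeroth-order term: since $\Delta\bar\vp=-\bar N_e$, elliptic regularity and the Poincar\'e inequality give $\|\D\bar\vp\|_s\le C\|\bar N_e\|_s$, so Proposition \ref{localexistence} provides a local solution $(\bar\rho_e,\bar u_e,\D\bar\vp)\in C([0,T];H^s)\cap C^1([0,T];H^{s-1})$. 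Everything then hinges on a closed a priori estimate: assuming $\sup_{[0,T]}(\|\bar N_e\|_s^2+\|\bar u_e\|_s^2)$ is sufficiently small, I aim at $\frac{d}{dt}\bar{\mathbb E}(t)+c\,\bar{\mathbb D}(t)\le0$ with $\bar{\mathbb E}(t)$ equivalent to $\|\bar N_e(t)\|_s^2+\|\bar u_e(t)\|_s^2+\|\D\bar\vp(t)\|_s^2$ and $\bar{\mathbb D}(t)$ equivalent to $\|\bar N_e(t)\|_s^2+\|\bar u_e(t)\|_s^2$.

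To assemble $\bar{\mathbb D}$ I would run three estimates, mirroring the three lemmas of Section 3 but keeping only the electron equations. (i) An $L^2$ entropy estimate with $\eta_0=\frac12\bar\rho_e|\bar u_e|^2+H_e(\bar\rho_e)$: the identity $-\D\bar\vp\cdot\bar\rho_e\bar u_e=-\dive(\bar\vp\bar\rho_e\bar u_e)+\dive(\bar\vp\,\pt\D\bar\vp)-\frac12\frac{d}{dt}|\D\bar\vp|^2$ (coming from $\Delta\bar\vp=-\bar N_e$ and the mass equation) turns the damping $-\bar u_e$ into the dissipation $\int\bar\rho_e|\bar u_e|^2$. (ii) Higher-order estimates: apply $\pa_x^\al$ for $1\le|\al|\le s$, pair with $2A_e^0(\bar\rho_e)\pa_x^\al\bar U_e$; the commutator (Moser) terms are cubic, and the Poisson term $-2\langle\pa_x^\al\D\bar\vp,\bar\rho_e\pa_x^\al\bar u_e\rangle$ is integrated by parts exactly as in Lemma \ref{lemmahoestim} (using $\pt\Delta\bar\vp=\dive(\bar\rho_e\bar u_e)$) to contribute $-\frac{d}{dt}\|\pa_x^\al\D\bar\vp\|^2$ to the energy, leaving $\|\bar u_e\|_s^2$ in the dissipation. (iii) Dissipation of $\bar N_e$: multiply the electron momentum equation by $\bar\rho_e$ so that $\bar\rho_e\D h_e(\bar\rho_e)=\D p_e(\bar\rho_e)=p_e'(\bar\rho_e)\D\bar N_e$, apply $\pa_x^\beta$ for $|\beta|\le s-1$, pair with $\pa_x^\beta\D\bar N_e$, and introduce the corrector $\langle\pa_x^\beta(\bar\rho_e\bar u_e),\pa_x^\beta\D\bar N_e\rangle$ exactly as in Lemma \ref{lemmadisestim}; the strict monotonicity $p_e'(\bar\rho_e)\ge h_1>0$ gives $h_1\|\D\bar N_e\|_{s-1}^2$, while the Poisson term $\langle\pa_x^\beta\D\bar\vp,\pa_x^\beta\D(\bar\rho_e^2)\rangle$ becomes, after integration by parts and using $\Delta\bar\vp=-\bar N_e$, equal to $\langle(\bar\rho_e+1)\pa_x^\beta\bar N_e,\pa_x^\beta\bar N_e\rangle$ plus cubic remainders, hence is $\ge\|\pa_x^\beta\bar N_e\|^2-(\text{cubic})$. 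Summing over $|\beta|\le s-1$ delivers the \emph{full} density dissipation $\|\bar N_e\|_s^2$ — this is where the unipolar structure matters, since here $\Delta\bar\vp$ controls $\bar N_e$ itself, including its mean, unlike the bipolar case where only $\rho_i-\rho_e$ is controlled and $\nabla N_i$ alone dissipates.

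Finally I would form $(\mathrm i)+(\mathrm{ii})+\kappa(\mathrm{iii})$ for a small $\kappa>0$: the corrector terms keep $\bar{\mathbb E}$ equivalent to the energy when $\kappa$ is small, the $C\|\bar u_e\|_s^2$ created by (iii) is absorbed (after multiplication by $\kappa$) into the $\|\bar u_e\|_s^2$ dissipation of (i)--(ii), and all cubic remainders are absorbed using the smallness of the energy. This yields $\frac{d}{dt}\bar{\mathbb E}(t)+\kappa_1\bar{\mathbb D}(t)\le0$, and a standard continuity/bootstrap argument upgrades the local solution to a global one obeying \eqref{eregu}. Since $\Delta\bar\vp=-\bar N_e$ forces $\|\D\bar\vp\|_s\le C\|\bar N_e\|_s$ and step (iii) recovers the zero mode of $\bar N_e$, the dissipation $\bar{\mathbb D}$ is in fact equivalent to $\bar{\mathbb E}$; hence $\frac{d}{dt}\bar{\mathbb E}+q\bar{\mathbb E}\le0$ and Gr\"onwall's inequality gives the exponential decay. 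I expect the only genuinely delicate point to be step (iii) — choosing the corrector, tracking the sign of its time derivative, and checking that the coefficient hierarchy $\kappa\ll1$ really closes the inequality; the remaining ingredients are routine Moser-type commutator estimates together with the elliptic and Poincar\'e bounds for $\bar\vp$.
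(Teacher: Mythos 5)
Your strategy is correct and is essentially the argument the paper relies on: Proposition \ref{danjiep} is not proved in the text but quoted from \cite{Hsiao2003,Peng2015}, and those works (like Section 3 of this paper in the bipolar setting) close the estimates exactly as you describe — $L^2$ entropy identity, higher-order symmetrized estimates with the Poisson term absorbed into $\frac{d}{dt}\|\pa_x^\al\D\bar\vp\|^2$, and a corrector term yielding dissipation of the density, with the unipolar Poisson coupling supplying the full $\|\bar N_e\|^2$ (not just $\|\D\bar N_e\|^2$) so that the dissipation dominates the energy and Gr\"onwall gives the exponential decay. No gaps to flag.
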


However, for the limiting equations for ions, after taking the limits, we lose the information of the conservative equations for ions. Consequently, we use the first order profile in the asymptotic expansion to substitute the limiting equations for ions. In order to do this, we first need to establish estimates on $u_i$. Since $u_i\to 0$, the estimates on $u_i$ actually imply  the error estimates for $u_i$. 

\begin{lemma}\label{convu}(Convergence rate for $u_i$)\label{u_i} Suppose $\|u_{i,0}\|_{s-1}\leq C\eps^2$, then we have
\begin{equation}\label{finalpre2} 
    \|u_i(t)\|_{s-1}^2+\int_0^t\|u_i(\tau)\|_{s-1}^2d\tau\leq C\eps^4, \quad \forall\,t>0.
\end{equation}
\end{lemma}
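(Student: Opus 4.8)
The plan is to perform a weighted energy estimate directly on the equations governing $u_i$, exploiting the damping term $-u_i$ and the favourable powers of $\eps$ that appear in the momentum equation for ions. The starting point is the second equation of \eqref{main}, namely
\[
\pt u_i+(u_i\cdot\nabla)u_i+\eps^2\nabla h_i(\rho_i)=\eps^2\D\vp-u_i.
\]
For a multi-index $\alpha$ with $|\alpha|\le s-1$, I would apply $\pa_x^\al$, take the $L^2$ inner product with $2\pa_x^\al u_i$, and sum over $\al$. The damping term produces $-2\|u_i\|_{s-1}^2$, while the convection term $(u_i\cdot\nabla)u_i$ and the commutators are cubic and absorbed by the already-established uniform smallness of $\mathscr{E}_T$ (Theorem \ref{Thm2.2}), contributing at most $C\mathscr{E}_T^{1/2}\mathscr{D}(t)$; crucially the uniform bound gives $\mathscr{D}(t)\le C\eps^{-2}\|u_i\|_s^2 + C\|u_e\|_s^2$ so these terms will need to be handled with care about their $\eps$-dependence.

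The two forcing terms on the right, $\eps^2\nabla h_i(\rho_i)=\eps^2 h_i'(\rho_i)\nabla N_i$ and $\eps^2\D\vp$, are each $O(\eps^2)$ in the appropriate norm: by Theorem \ref{Thm2.2} we have $\|\nabla N_i\|_{s-1}\le C\eps$ pointwise in time (since $\eps^{-2}\|N_i\|_{s-1}^2$ is controlled and in fact $\|\nabla N_i\|_{s-1}\le \eps\,\mathscr E(t)^{1/2}\cdot C$), hence $\eps^2\|\nabla h_i(\rho_i)\|_{s-1}\le C\eps^3$; and $\|\D\vp\|_{s-1}\le C(\|N_i\|_s+\|N_e\|_s)\le C$, but more usefully we can pair $\eps^2\D\vp$ against $u_i$ and bound by $\tfrac12\|u_i\|_{s-1}^2 + C\eps^4\|\D\vp\|_{s-1}^2$. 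After absorbing the $\tfrac12\|u_i\|_{s-1}^2$ into the damping, one arrives at a differential inequality of the shape
\[
\frac{d}{dt}\|u_i(t)\|_{s-1}^2 + \|u_i(t)\|_{s-1}^2 \le C\eps^4 + C\mathscr{E}_T^{1/2}\mathscr{D}(t),
\]
where the $C\eps^4$ collects $\eps^4\|\D\vp\|_{s-1}^2$ and $\eps^2\cdot\eps^2\|\nabla h_i(\rho_i)\|_{s-1}^2$-type contributions (note $\eps^2 h_i'\nabla N_i$ paired with $u_i$ gives $\eps^2\|\nabla N_i\|_{s-1}\|u_i\|_{s-1}\le \tfrac14\|u_i\|_{s-1}^2 + C\eps^4\|\nabla N_i\|_{s-1}^2$ and $\|\nabla N_i\|_{s-1}^2\le C\eps^2$, so this is even $O(\eps^6)$).

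Integrating this Gronwall-type inequality from $0$ to $t$ and using the hypothesis $\|u_{i,0}\|_{s-1}^2\le C\eps^4$ together with the uniform bound $\int_0^\infty\mathscr{D}(\tau)\,d\tau\le C\mathscr{E}(0)$ from \eqref{finestimbf} yields both $\sup_t\|u_i(t)\|_{s-1}^2\le C\eps^4$ and $\int_0^\infty\|u_i(\tau)\|_{s-1}^2\,d\tau\le C\eps^4$, which is \eqref{finalpre2}. The main obstacle I anticipate is bookkeeping the $\eps$-powers in the cubic error terms: one must verify that $\int_0^t\mathscr{E}_T^{1/2}\mathscr{D}(\tau)\,d\tau$ contributes only $O(\eps^4)$ and not merely $O(1)$ — this requires using that $\mathscr{E}_T^{1/2}$ is small \emph{and} that every occurrence of $u_i$ inside $\mathscr{D}$ already carries an $\eps^{-1}$ that is compensated by the $\eps^2$ prefactors in the ion momentum equation, so that no uncompensated negative power of $\eps$ survives. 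A clean way to organise this is to close the estimate for the quantity $\eps^{-4}\|u_i\|_{s-1}^2$ rather than $\|u_i\|_{s-1}^2$ directly, mirroring the weighting philosophy already used in Section 3.
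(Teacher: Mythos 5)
Your overall strategy coincides with the paper's: a direct $H^{s-1}$ energy estimate on the convective-form ion momentum equation in \eqref{main}, using the damping $-u_i$ together with the $\eps^2$ prefactors on $\nabla h_i(\rho_i)$ and $\D\vp$, and Young's inequality to generate $\eps^4$-weighted sources. However, two steps as you have written them do not close. The differential inequality you propose,
\[
\frac{d}{dt}\|u_i(t)\|_{s-1}^2+\|u_i(t)\|_{s-1}^2\le C\eps^4+C\mathscr{E}_T^{1/2}\mathscr{D}(t),
\]
cannot be integrated to a bound uniform in time: the constant forcing $C\eps^4$ integrates to $C\eps^4 t$, so the claimed control of $\int_0^t\|u_i(\tau)\|_{s-1}^2\,d\tau$ fails for large $t$. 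You must not replace $\eps^4\|\D\vp(\tau)\|_s^2$ and $\eps^4\|\nabla\rho_i(\tau)\|_{s-1}^2$ by constants pointwise in time; keep them under the time integral and invoke the dissipation part of \eqref{finestim}, which yields $\int_0^\infty\bigl(\|\nabla\rho_i\|_{s-1}^2+\|\Delta\vp\|_{s-1}^2\bigr)\,d\tau\le C$ and hence a total contribution of order $\eps^4$. This is exactly how the paper's proof proceeds.

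The second gap is the term $C\mathscr{E}_T^{1/2}\mathscr{D}(t)$, which must not appear at all: its time integral is $C\mathscr{E}_T^{1/2}\mathscr{E}(0)=O(\delta^3)$ with $\delta$ independent of $\eps$, and $\mathscr{D}$ contains $\|u_e\|_s^2$ and $\|\nabla N_\nu\|_{s-1}^2$, which carry no power of $\eps$; this would destroy the $O(\eps^4)$ conclusion. You correctly identify this as the main obstacle, but your proposed remedy (closing the estimate for $\eps^{-4}\|u_i\|_{s-1}^2$) does not remove those contributions. What actually saves the argument, and what the paper uses, is that the only nonlinear term in the ion velocity equation is $(u_i\cdot\nabla)u_i$, which involves $u_i$ alone; the paper bounds its contribution by $C\|u_i\|_{s-1}^3$ and absorbs it directly into the damping $\tfrac12\|\pa_x^\al u_i\|^2$ using the smallness of $\|u_i\|_{s-1}$ guaranteed by Theorem \ref{Thm2.2}, so that $\mathscr{D}(t)$ never enters. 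With these two corrections your argument becomes the paper's proof.
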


\begin{proof}
For any multi-indices $\alpha \in \mathbb{N}^3$ with $\left |\alpha \right| \leq s-1$, applying $\pa_x^\al$ to the momentum equation for ion in \eqref{main}, taking the inner product with $\pa_x^\al u_i$ in $L^2$, we have
\begin{eqnarray}\label{startui}
\dfrac{1}{2}\dfrac{d}{dt}\|\pa_x^\al u_i\|^2+\|\pa_x^\al u_i\|^2=-\left<\pa_x^\al\left((u_i\cdot \nabla)u_i\right),\pa_x^\al u_i\right>+\left<\eps^2\pa_x^\al (\D\vp-\nabla h_i(\rho_i)),\pa_x^\al u_i\right>.
\end{eqnarray}
By theorem \ref{Thm2.2}, Cauchy-Schwarz inequality and Moser-type inequality, we have
\begin{eqnarray*}
    &&\left|\left< \pa_x^\al\left( (u_i\cdot \nabla)u_i\right ) ,\pa_x^\al u_i\right>\right|\\ 
    &\leq &  C \left\|\nabla u_i^\eps \right\|^2_{s-2} \left\|u_i^\eps \right\|_{s-1}+
    \frac{1}{2} \left|\left\langle \dive(u_i^\eps), (\partial^\al_x u_i)^2 \right\rangle\right|\\
    &\leq & C \left\|u_i^\eps\right\|^3_{s-1}.
\end{eqnarray*}
By Young's inequality we have 
\begin{equation*}
   \left|\left<\eps^2\pa_x^\al (\D\vp-\nabla h_i(\rho_i)),\pa_x^\al u_i\right>\right|\leq \frac{1}{2} \left \| \pa_x^\al u_i \right \|^2+C\eps^4 \left \| \D\rho_i \right \|_{s-1}^2+C\eps^4\|\D\vp\|_s^2.
\end{equation*}
Substituting these estimates into \eqref{startui}, we conclude
\[
\dfrac{1}{2}\dfrac{d}{dt}\|\pa_x^\al u_i\|^2+\dfrac{1}{2}\|\pa_x^\al u_i\|^2\leq \left\|u_i\right\|^3_{s-1}+C\eps^4 \left \| \D\rho_i \right \|_{s-1}^2+C\eps^4\|\D\vp\|_s^2.
\]
Summing the above estimate for all $|\al|\leq s-1$, and integrating the resulting equation over $[0,t]$, we have
\begin{equation*}
    \left \| u_i(t) \right \|_{s-1}^{2}+\int_{0}^{t} \, \left \| u_i(\tau) \right \|_{s-1}^{2}d\tau \leq C\eps^4,
\end{equation*}
provided that $\|u_i\|_{s-1}$ is sufficiently small. This ends the proof. 
\end{proof}

From the above lemma and the equation \eqref{main}, we know that sequences $(\eps^{-2}u_i)_{\eps>0}$ and $(\pt(\eps^{-2}u_i))_{\eps>0}$ are uniformly bounded. Consequently, there exists a function $\bar{u}_i\in L^\infty(\R^+; H^{s})$ such that as $\eps\to 0$, 
\begin{eqnarray}\label{converui}
\eps^{-2}u_i&\rightharpoonup& \bar{u}_i, \quad \text{weakly-* in } \,\,\, L^\infty(\R^+;H^s),
\end{eqnarray}
and $(\eps^{-2}u_i^\eps)_{\eps>0}$ is relatively compact in $C([0,T];H_{loc}^{s_1})$, for all $s_1\in(0,s)$. These are sufficient for us to pass the limit in the momentum equation for ions to get its limiting equation as 
\begin{equation}\label{bu}
\pt \bar{u}_i+\bar{u}_i=\D\bar{\vp}, \quad \bar{u}_i(0,x)=\bar{u}_{i,0}(x),
\end{equation}
where $\bar{u}_{i,0}(x)$ is the weak limit of $\eps^{-2}u_{i,0}$ in $H^{s-1}$. 

Let $\bar{\rho}_i^1$ be the solution of the following equation
\begin{equation}\label{4.1}
    \pt \bar{\rho}_i^1+\dive \bar{u}_i=\Delta \bar{\rho}_i^1, \quad \bar{\rho}_i^1(0,x)=\bar{\rho}_{i,0}^1(x),
\end{equation}
and impose \begin{equation} \int_{\mathbb{R}^3} \bar{\rho}_{i,0}^1(x) dx=0.\end{equation}
We then have the following estimates for the first-order profile of the equation.
\begin{lemma}\label{regurhoi1} (Existence and global estimates on first profile systems) Let $s\geq 3$ be an integer. Then there exists positive constants $\delta$ and $C$ such that if
\[
\|\bar{\rho}_{i,0}^1\|_{s-1}+\|\bar{u}_{i,0}\|_s\leq C\delta,
\]
then there exists unique solution $(\bar{\rho}_i^1,\bar{u}_i)$ to \eqref{bu}-\eqref{4.1} satisfying the following energy estimates
\begin{equation}
\|\bar{u}_i(t)\|_s^2+\int_0^t \|\bar{u}_i(\tau)\|_s^2 d\tau \leq C\delta,
\end{equation}
and
\begin{equation}
\|\bar{\rho}_i^1(t)\|_{s-1}^2+\int_0^t \|\D \bar{\rho}_i^1(\tau)\|_{s-1}^2 d\tau \leq C,
\end{equation}
moreover for $|\al| \leq 2s-2$, it holds,
\begin{equation}
    \int_0^t \int_{\mathbb{R}^3} |\pa^\al_x \D \Delta \bar{\rho}_i^1(s,x)| dx ds \leq C , \  \ \forall t>0.
\end{equation}

\end{lemma}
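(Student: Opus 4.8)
The plan is to treat the two equations \eqref{bu} and \eqref{4.1} essentially in sequence, exploiting that the $\bar{u}_i$-equation is a linear damped ODE (in $t$, pointwise in $x$) driven by $\D\bar{\vp}$, and that the $\bar{\rho}_i^1$-equation is then a linear heat equation with source $-\dive\bar{u}_i$. First I would establish the bound on $\bar{u}_i$. Since $\bar{\vp}$ comes from the already-solved unipolar system of Proposition \ref{danjiep}, the source $\D\bar{\vp}$ lies in $L^\infty(\R^+;H^s)$ with the exponential decay $\|\D\bar{\vp}(t)\|_s\le C\delta e^{-qt}$. Applying $\pa_x^\al$ for $|\al|\le s$ to \eqref{bu}, taking the $L^2$ inner product with $\pa_x^\al\bar{u}_i$, and using Young's inequality on the right-hand side gives
\[
\frac{d}{dt}\|\pa_x^\al\bar{u}_i\|^2+\|\pa_x^\al\bar{u}_i\|^2\le C\|\pa_x^\al\D\bar{\vp}\|^2\le C\delta e^{-2qt},
\]
and summing over $|\al|\le s$ and integrating in time yields both $\sup_t\|\bar{u}_i(t)\|_s^2\le C\delta$ and $\int_0^\infty\|\bar{u}_i(\tau)\|_s^2\,d\tau\le C\delta$ (with the same exponential-decay profile for $\bar{u}_i$ itself, inherited from the Duhamel formula $\bar{u}_i(t)=e^{-t}\bar{u}_{i,0}+\int_0^t e^{-(t-\tau)}\D\bar{\vp}(\tau)\,d\tau$, which I would record since it is needed later). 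Uniqueness is immediate from linearity.

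Next I would solve \eqref{4.1} and bound $\bar{\rho}_i^1$. This is the linear heat equation $\pt\bar{\rho}_i^1-\Delta\bar{\rho}_i^1=-\dive\bar{u}_i$ with mean-zero initial data (and mean-zero source, since $\int\dive\bar{u}_i=0$ on $\R^3$/$\T^3$ under the standing assumptions), so $\int\bar{\rho}_i^1(t)\,dx=0$ for all $t$ and the Poincaré inequality is available. Global existence and uniqueness follow from standard linear parabolic theory. For the energy estimate, apply $\pa_x^\al$ with $|\al|\le s-1$, pair with $\pa_x^\al\bar{\rho}_i^1$: the $\Delta$ term produces the good term $-\|\pa_x^\al\D\bar{\rho}_i^1\|^2$, while the source contributes $\langle\pa_x^\al\dive\bar{u}_i,\pa_x^\al\bar{\rho}_i^1\rangle = -\langle\pa_x^\al\bar{u}_i,\pa_x^\al\D\bar{\rho}_i^1\rangle$, absorbed by $\tfrac12\|\pa_x^\al\D\bar{\rho}_i^1\|^2 + C\|\bar{u}_i\|_{s-1}^2$. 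Using the Poincaré inequality $\|\pa_x^\al\bar{\rho}_i^1\|\le C\|\pa_x^\al\D\bar{\rho}_i^1\|$ to recover a dissipation term for $\bar{\rho}_i^1$ itself, then summing over $|\al|\le s-1$, integrating in time, and invoking $\int_0^\infty\|\bar{u}_i\|_{s-1}^2\,d\tau\le C\delta<\infty$ from the previous step, gives $\sup_t\|\bar{\rho}_i^1(t)\|_{s-1}^2+\int_0^\infty\|\D\bar{\rho}_i^1(\tau)\|_{s-1}^2\,d\tau\le C$.

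The last and most delicate assertion is $\int_0^t\int_{\R^3}|\pa_x^\al\D\Delta\bar{\rho}_i^1|\,dx\,ds\le C$ for all $|\al|\le 2s-2$, which requires controlling \emph{high} derivatives (up to order $2s+1$) of $\bar{\rho}_i^1$ in an $L^1_tL^1_x$ norm — this is the main obstacle, since the $H^{s-1}$ energy estimate above is far from enough. The strategy I would use is to go back to Duhamel's formula for the heat semigroup, $\bar{\rho}_i^1(t)=e^{t\Delta}\bar{\rho}_{i,0}^1-\int_0^t e^{(t-\tau)\Delta}\dive\bar{u}_i(\tau)\,d\tau$, and apply the smoothing estimate $\|\pa_x^\beta e^{t\Delta}f\|_{L^1_x}\le Ct^{-|\beta'|/2}\|\pa_x^{\beta''}f\|_{L^1_x}$ together with the $L^1_x$ bounds we already have on $\bar{u}_i$ and its derivatives (which decay exponentially in $\tau$ by the ODE structure of \eqref{bu}). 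Concretely, for $|\al|\le 2s-2$ one writes $\pa_x^\al\D\Delta\bar{\rho}_i^1$ as a time integral of $\pa_x^{\al}\D\Delta e^{(t-\tau)\Delta}\dive\bar{u}_i(\tau)$, distributes derivatives so that at most $s$ of them fall on $\bar{u}_i$ (leaving $\bar{u}_i$-factors controlled in $H^s\hookrightarrow L^1(\T^3)$, say by $\|\bar{u}_i(\tau)\|_s$, hence integrable and decaying in $\tau$) and the remaining $\le s+1$ derivatives plus the Laplacian hit the kernel, producing an integrable singularity $(t-\tau)^{-\theta}$ with $\theta<1$ provided $s$ is large enough — and the hypothesis $s\ge 3$ is exactly what makes the counting work. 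The double time integral $\int_0^t\int_0^\cdot (t-\tau)^{-\theta}e^{-q\tau}\,d\tau\,\cdots$ is then bounded uniformly in $t$ by Fubini and the integrability of both factors; the contribution of the initial datum $e^{t\Delta}\bar{\rho}_{i,0}^1$ is handled the same way using $\bar{\rho}_{i,0}^1\in H^{s-1}\hookrightarrow L^1$. I expect the bookkeeping of which derivatives land where (to keep the kernel singularity subcritical) to be the only real difficulty; everything else is routine linear estimates.
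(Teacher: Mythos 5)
Your treatment of $\bar{u}_i$ (damped ODE driven by $\D\bar{\vp}$, Duhamel formula, exponential decay inherited from Proposition \ref{danjiep}) and your parabolic energy estimate for $\bar{\rho}_i^1$ match the paper's argument for the first two bounds. The gap is in the third estimate, and it is a genuine one: your derivative count is backwards. For $|\al|\le 2s-2$ the quantity $\pa_x^\al\D\Delta\bar{\rho}_i^1$ carries up to $2s+1$ spatial derivatives, while $\bar{u}_i$ only has $H^s$ regularity (it solves an ODE in $t$, so it never gains spatial smoothness beyond that of $\bar{u}_{i,0}$ and $\D\bar{\vp}$). Placing at most $s$ derivatives on $\bar{u}_i$ therefore leaves at least $s+1\ge 4$ derivatives to be absorbed by the heat kernel, which costs $(t-\tau)^{-(s+1)/2}$ in the $L^1_x\to L^1_x$ smoothing estimate, an exponent $\ge 2$ that is not integrable near $\tau=t$; and increasing $s$ makes the deficit worse, not better, since the total order $2s+1$ grows twice as fast as the available regularity $s$. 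The claim that one gets $(t-\tau)^{-\theta}$ with $\theta<1$ ``provided $s$ is large enough'' cannot be rescued by redistributing derivatives.

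The paper's mechanism for this estimate is different: it puts \emph{all} derivatives on the Gaussian kernel $G$ and uses only the large-time bound $\int|\pa_{x_j}\pa_x^\al\Delta_x G(t,y;x)|\,dy\le Ct^{-3/2}$, which holds for every order $|\al|\le 2s-2$ once $t\ge 1$, combined with the sup-norm of $\bar{\rho}_{i,0}^1$ and the exponential decay of $\|\dive\bar{u}_i(\tau)\|_{s-1}$ (obtained from the explicit variation-of-constants formula for $\dive\bar{u}_i$ and the decay of $\bar{\rho}_e-1$). This yields $O(t^{-3/2})$ decay of $\|\pa_x^\al\D\Delta\bar{\rho}_i^1(t)\|_{L^1_x}$ for large $t$, hence convergence of the time integral at infinity, while the contribution of bounded times is handled separately by local regularity of the solution rather than by a subcritical kernel singularity. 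To repair your argument you would need this same splitting: treat $t-\tau\ge 1$ with all derivatives on the kernel, and treat the near-diagonal region $t-\tau<1$ by a separate short-time argument.
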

\begin{proof} The existence of $\bar{u}_i$ is clear according to \eqref{converui}. According to the equation \eqref{bu} and Proposition \ref{danjiep}, it is not difficult to establish the following estimates for $\bar{u}_i$:
\[
\|\bar{u}_i(t)\|_s^2+\int_0^t \|\bar{u}_i(\tau)\|_s^2 d\tau \leq C\delta. 
\]
In addition, by Proposition \ref{danjiep}, we have
\[
\|\bar{\rho}_e-1\|_s^2\leq C\delta e^{-q t},
\]
where $q>0$ is a positive constant  and $\delta>0$ is sufficiently small. Let $|\al|\leq s-1$ be an multi-index. Taking the $\dive\pa_x^\al $ operator to both sides of the equation \eqref{bu} and combining the limiting equation, we have
\[
\pt \dive \pa_x^\al\bar{u}_i+\pa_x^\al\dive \bar{u}_i=\Delta \pa_x^\al\bar{\vp}=\pa_x^\al(1-\bar{\rho}_e). 
\]
Consequently, the exact solution to the above equation is of the form
\[
\dive \pa_x^\al\bar{u}_i(x,t)= e^{-t}\dive \pa_x^\al\bar{u}_{i,0}+e^{-t}\int_0^t(\pa_x^\al(1-\bar{\rho}_e(\tau)))e^{\tau}d\tau,
\]
which implies there exists a positive constant $q^\prime$, such that
\begin{eqnarray}
\|\dive \bar{u}_i\|_{s-1}&\leq&\sum_{|\al|\leq s-1}\|\dive \pa_x^\al\bar{u}_i(x,t)\|\nonumber\\
&\leq& Ce^{-t}\|\dive\bar{u}_{i,0}\|_s+Ce^{-t}\int_0^t\|(1-\bar{\rho}_e(\tau))\|_se^\tau d\tau \nonumber\\
&\leq&  C\delta e^{-t}+Ce^{-t}\int_0^t e^{-(q/2-1)\tau}d\tau\nonumber\\
&\leq& C\delta e^{-t}+\dfrac{C\delta }{q/2-1} (e^{-t}-e^{-qt/2})\nonumber\\
&\leq& C\delta e^{-q^\prime t},\nonumber
\end{eqnarray}
which shows the exponential decay property of $\|\dive \bar{u}_i\|_{s-1}$ when $t\to+\infty$. 

We now establish the estimates for $\bar{\rho}_{i}^1$. Since $\bar{u}_i$ is given, we have the exact solution to the periodic problem \eqref{4.1} is of the following form
\[
\bar{\rho}_i^1(t,x)=\int_{\R^3} \bar{\rho}_{i,0}^1(y)G(t,y;x)dy- \int_0^t\int_{\R^3}\dive\bar{u}_i(y,\tau)G(t-\tau,y;x)dyd\tau,
\]
where the fundamental solution
\[
G(t,y;x)=\dfrac{1}{(4\pi t)^{\frac{3}{2}}}\exp\left(-\dfrac{|x-y|^2}{4t}\right),\quad \exp(z)=e^z.
\]

Energy estimates for \eqref{4.1} show directly that
\[
\|\bar{\rho}_i^1(t)\|_s^2+\int_0^t\|\D\bar{\rho}_i^1(\tau)\|_s^2d\tau\leq C\int_0^t\|\bar{u}_i(\tau)\|_s^2d\tau\leq C\delta.
\]
The above estimate is not sufficient for our proof. We now establish the $L^1$ estimate for $\Delta \bar{\rho}_{i}^1$. Since for a certain $j=1,2,3$,
\[
\int_{\R^3}|\pa_{x_j}\partial^\alpha_x \Delta_x G(t,y;x)|dy\leq \dfrac{C}{t^{\frac{3}{2}}}
\]
and for all $|\alpha|\leq 2s-2$ when $t\geq 1$.
On the other hand, we have
\begin{eqnarray}
|\pa_{x_j}\pa_x^\al \Delta\bar{\rho}_i^1(t,x)|&\leq&\int_{\R^3} |\bar{\rho}_{i,0}^1(y)||\pa_{x_j} \partial^\alpha_x \Delta G(t,y;x)|dy+ \int_0^t\int_{\R^3}|\dive\bar{u}_i(y,\tau)||\pa_{x_j} \partial^\alpha_x \Delta G(t-\tau,y;x)|dyd\tau\nonumber\\
&\leq& \|\bar{\rho}_{i,0}^1\|_{s-1}\dfrac{C}{t^{\frac{3}{2}}}+\int_0^t\|\dive\bar{u}_i\|_{s-1}\dfrac{C}{(t-\tau)^{\frac{3}{2}}}d\tau\nonumber\\
&\leq&\|\bar{\rho}_{i,0}^1\|_{s-1}\dfrac{C}{t^{\frac{3}{2}}}+\int_0^t\dfrac{C e^{-q'\tau}}{(t-\tau)^{\frac{3}{2}}}d\tau.
\end{eqnarray}

Notice that 
\[
\int_0^t\dfrac{C e^{-q'\tau}}{(t-\tau)^{\frac{3}{2}}}d\tau=C e^{-q't}\int_0^t\dfrac{e^{q's}}{s^{\frac{3}{2}}}ds\to \dfrac{C }{q'}{t^{-\frac{3}{2}}}, \quad \text{as}\,\, t\to\infty,
\]

in this way we have the decay rate, combining with the existence of local solution we have

\[
\int_0^t \int_{\mathbb{R}^3}  |\pa_{x_j}\pa_x^\al \Delta\bar{\rho}_i^1(s,x)| dx ds \leq C, 
\]
which ends the proof.

\end{proof}

After all these preparations, we tend to establish the estimates for the following error variables
\[
\mathcal{N}_\nu=\rho_\nu-\bar{\rho}_\nu, \quad w_i=\eps^{-2}u_i-\bar{u}_i, \quad w_e=u_e-\bar{u}_e, \quad \mathcal{F}=\D\vp-\D\bar{\vp}. 
\]

\subsection{Error Estimates based on Anti-symmetric Structure}
As before, we introduce the following dissipative energy
\begin{eqnarray*}
\mathcal{D}(t)&=&\|\Ne(t)\|_{s-1}^2+\|\Ni(t)\|_{s-1}^2+\|w_e(t)\|_{s-1}^2+\|\calf(t)\|_{s-1}^2.
\end{eqnarray*}
We first have the estimates for error variables for ions.
\begin{lemma}(Estimates for ion variables) It holds
\begin{equation}\label{regui}
\|\Ni(t)\|_{s-1}^2+\|\eps w_i(t)\|_{s-1}^2+\int_0^t\|\eps w_i(\tau)\|_{s-1}^2d\tau\leq C\eps^2, \quad \forall\, t>0.
\end{equation}
\end{lemma}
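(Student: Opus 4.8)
The plan is to avoid any coupled energy estimate for the ion block of the error system and instead read off \eqref{regui} from facts already established: the uniform bound \eqref{finestim} of Theorem~\ref{Thm2.2}, the refined bound \eqref{finalpre2} for $u_i$ from Lemma~\ref{convu}, and the global bounds on the first ion profile $\bar{u}_i$ from Lemma~\ref{regurhoi1}. The key point is that, since $\bar{\rho}_i(x)\equiv 1$ has already been established at the start of this section, the ion density error is simply $\Ni=\rho_i-1=N_i$, so nothing new enters there; and the scaled velocity error $\eps w_i=\eps^{-1}u_i-\eps\bar{u}_i$ splits into two pieces that are each $O(\eps)$ in $H^{s-1}$ for independent reasons.

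First I would handle $\Ni$. Since $\bar{\rho}_i\equiv 1$, $\Ni(t)=\rho_i(t)-1$, and the right-hand side of \eqref{finestim} is bounded uniformly in $\eps$ by the smallness hypotheses of Theorem~\ref{Thm2.2} (using in addition the Poincar\'e/Poisson control of $\|\D\vp_0\|_s$). Hence $\eps^{-2}\|\rho_i(t)-1\|_{s-1}^2\le C$ for all $t>0$, that is $\|\Ni(t)\|_{s-1}^2\le C\eps^2$. Then I would handle $w_i$: from $w_i=\eps^{-2}u_i-\bar{u}_i$ the triangle inequality gives
\[
\|\eps w_i(t)\|_{s-1}^2\le 2\eps^{-2}\|u_i(t)\|_{s-1}^2+2\eps^2\|\bar{u}_i(t)\|_{s-1}^2 .
\]
The hypothesis $\|u_{i,0}\|_{s-1}\le C\eps^2$ of Lemma~\ref{convu} is precisely what \eqref{cond2} provides, so \eqref{finalpre2} gives $\|u_i(t)\|_{s-1}^2\le C\eps^4$ and the first term is $\le C\eps^2$; Lemma~\ref{regurhoi1} gives $\|\bar{u}_i(t)\|_s^2\le C$, so the second term is also $\le C\eps^2$. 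Applying the same splitting under the time integral and invoking $\int_0^t\|u_i(\tau)\|_{s-1}^2\,d\tau\le C\eps^4$ and $\int_0^t\|\bar{u}_i(\tau)\|_s^2\,d\tau\le C$ yields $\int_0^t\|\eps w_i(\tau)\|_{s-1}^2\,d\tau\le C\eps^2$ for all $t>0$. Adding the three bounds gives \eqref{regui}.

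I do not expect a genuine analytic obstacle: the content of the lemma is the structural observation that, once $\bar{\rho}_i$ is known to be constant, the ion error variables decouple from the rest of the error system, so the already-proved bounds (Theorem~\ref{Thm2.2}, Lemma~\ref{convu}, Lemma~\ref{regurhoi1}) are on their own strong enough. The only care needed is checking that the hypotheses of those auxiliary statements follow from \eqref{cond2} — notably $\|u_{i,0}\|_{s-1}\le C\eps^2$ and the smallness of $\bar{u}_{i,0}$ in $H^s$. A more hands-on alternative would be a damped $H^{s-1}$ energy estimate on $\pt w_i+w_i=-\eps^{-2}(u_i\cdot\D)u_i-\D h_i(\rho_i)+\calf$, obtained by subtracting \eqref{bu} from $\eps^{-2}$ times the ion momentum equation in \eqref{main}; this closes as well, but it drags in the potential error $\calf$, so the triangle-inequality route is cleaner.
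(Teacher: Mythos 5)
Your proof is correct and follows essentially the same route as the paper: the bound on $\Ni$ is read off from the uniform estimate of Theorem~\ref{Thm2.2} (using that $\bar{\rho}_i$ is within $C\eps$ of $1$), and $\eps w_i=\eps^{-1}u_i-\eps\bar{u}_i$ is split by the triangle inequality and controlled via \eqref{finalpre2} and Lemma~\ref{regurhoi1}. The extra checks you mention (that \eqref{cond2} supplies the hypothesis $\|u_{i,0}\|_{s-1}\le C\eps^2$ of Lemma~\ref{convu}) are the same ingredients the paper implicitly relies on.
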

\begin{proof} By the uniform estimate obtained in Theorem \ref{Thm2.2}, we have
\[
\|\Ni(t)\|_{s-1}^2\leq C\eps^2. 
\]
In addition, by the definition of $w_i$, it is clear that
\[
\|\eps w_i(t)\|_{s-1}\leq C\|\eps^{-1} u_i\|_{s-1}+C\|\eps \bar{u}_i\|_{s-1},
\]
which, combining \eqref{finalpre2} and lemma \ref{regurhoi1}, ends the proof. 
\end{proof}

\begin{lemma}\label{14} (Estimates for electron variables) It holds
\begin{equation}\label{regue}
    \|(\Ne,w_e,\mathcal{F})(t)\|_{s-1}^2 + \int_{0}^{t}\left\|w_e(\tau)\right\|^2_{s-1} d\tau 
    \leq  C\eps^2+C\delta\int_0^t\mathcal{D}(\tau)d\tau.
\end{equation}
\end{lemma}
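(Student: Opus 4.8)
The goal is a closed energy inequality for the electron error variables $(\mathcal N_e, w_e, \mathcal F)$ in $H^{s-1}$, and the natural strategy is to write down the error system obtained by subtracting the limiting electron equations \eqref{efinal2} (and the auxiliary limiting equations for $\bar u_i$, $\bar\rho_i^1$) from \eqref{main}, then run a standard symmetric-hyperbolic energy estimate, being careful about which source terms are genuinely $O(\eps^2)$ and which must be absorbed into $C\delta\int_0^t\mathcal D(\tau)d\tau$. First I would derive the equations for $\mathcal N_e$, $w_e$ and $\mathcal F=\D(\vp-\bar\vp)$: the mass equation gives $\pt\mathcal N_e+\dive(\rho_e u_e-\bar\rho_e\bar u_e)=0$, which I rewrite as $\pt\mathcal N_e+\dive(\bar\rho_e w_e)+\dive(\mathcal N_e u_e)+\dive(\bar\rho_e(\text{lower order}))=0$; the momentum equation gives $\pt w_e+(u_e\cdot\D)w_e+\D(h_e(\rho_e)-h_e(\bar\rho_e))+w_e=-\mathcal F+R_e$ with a remainder $R_e$ collecting the quadratic convection differences; and the Poisson equation gives $\dive\mathcal F=\mathcal N_i-\mathcal N_e$, where crucially $\mathcal N_i=\rho_i-\bar\rho_i=\rho_i-1$, so $\|\mathcal N_i\|_{s-1}\le C\eps$ by \eqref{regui}. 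This last point is what couples the electron error to the ion side, and it is exactly why $\mathcal N_i$ appears as a source of size $O(\eps)$, contributing $O(\eps^2)$ after squaring.

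The energy estimate itself I would carry out by symmetrizing as in Lemma \ref{lemmahoestim}: apply $\pa_x^\al$ for $|\al|\le s-1$, pair the mass equation with $h_e'(\rho_e)\pa_x^\al\mathcal N_e$, the momentum equation with $\rho_e\pa_x^\al w_e$, and handle the Poisson coupling $\langle\pa_x^\al\mathcal F,\rho_e\pa_x^\al w_e\rangle$ by integration by parts using $\dive\mathcal F=\mathcal N_i-\mathcal N_e$ and $\pt\mathcal N_e=-\dive(\rho_e u_e-\bar\rho_e\bar u_e)$, which produces $-\frac12\frac{d}{dt}\|\pa_x^\al\mathcal F\|^2$ up to commutators plus a term $\langle\pa_x^\al\mathcal F,\pa_x^\al(\text{stuff involving }\mathcal N_i)\rangle$ that is $O(\eps)\cdot\mathcal D^{1/2}$, hence $O(\eps^2)+C\delta\mathcal D$. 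The damping term $\|w_e\|_{s-1}^2$ on the left comes from the $+w_e$ in the momentum equation, giving the dissipation in \eqref{regue}. Commutators and the quadratic remainders $R_e$, $\dive(\mathcal N_e u_e)$ are controlled by Moser estimates and the smallness $\mathscr E_T^{1/2}\le C\delta$ exactly as in Section 3, landing in the $C\delta\int_0^t\mathcal D(\tau)d\tau$ bucket. The initial data contributes $C\eps^2$ via the hypothesis \eqref{cond2}.

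The main obstacle — and the reason this is more than a routine repeat of Lemma \ref{lemmahoestim} — is the treatment of the source terms coming from the mismatch between $\eps^{-2}u_i$ and its formal limit $\bar u_i$, and from the fact that the limiting ion density profile is approximated only through $\bar\rho_i^1$ solving the parabolic-regularized equation \eqref{4.1}. Concretely, the term $\dive(\rho_i u_i)$ in the ion mass equation, when compared against the limiting relation, leaves behind contributions of the form $\eps^2\dive(\rho_i w_i)$ and error terms involving $\Delta\bar\rho_i^1$; the $L^1_t L^1_x$ bound on $\pa_x^\al\D\Delta\bar\rho_i^1$ from Lemma \ref{regurhoi1} is precisely what is needed to integrate these in time and keep them $O(\eps^2)$ rather than merely bounded. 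I would isolate these terms, use $\|\eps w_i\|_{s-1}^2+\int_0^t\|\eps w_i\|_{s-1}^2d\tau\le C\eps^2$ from \eqref{regui} together with the $L^1_t$ estimate on the parabolic profile, and check that no uncontrolled $\eps^{-1}$ or $\eps^{-2}$ factor survives. Everything else — closing the inequality — is then the usual Grönwall/absorption argument once $\delta$ is taken small, but I will keep that for the combined estimate rather than Lemma \ref{14} itself, since \eqref{regue} is stated with the $C\delta\int_0^t\mathcal D(\tau)d\tau$ term still present on the right.
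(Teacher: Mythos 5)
Your overall strategy is the same as the paper's: symmetrize the electron error system, run the $H^{s-1}$ energy estimate, and convert the Poisson coupling $\langle\pa_x^\al\mathcal{F},\rho_e\pa_x^\al w_e\rangle$ into $-\tfrac12\tfrac{d}{dt}\|\pa_x^\al\mathcal{F}\|^2$ plus a residual by integrating by parts and using the mass equations. However, there is a genuine gap in how you close the ion contribution to that residual. You bound the leftover term involving $\mathcal{N}_i$ by ``$O(\eps)\cdot\mathcal{D}^{1/2}$, hence $O(\eps^2)+C\delta\mathcal{D}$,'' relying on the pointwise-in-time bound $\|\mathcal{N}_i\|_{s-1}\le C\eps$. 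For a \emph{global-in-time} estimate this fails: after Young's inequality and integration over $[0,t]$ the first piece becomes $C\eps^2 t/\delta$, which is unbounded as $t\to\infty$. The pointwise smallness of $\mathcal{N}_i$ is not time-integrable and cannot produce the uniform $C\eps^2$ on the right of \eqref{regue}. The paper avoids this by using the ion mass equation to rewrite the residual as $\langle\pa_x^\al(\rho_i u_i),\pa_x^\al\mathcal{F}\rangle$, bounding it by $C\|\eps^{-1}u_i\|_{s-1}^2+C\eps^2\|\mathcal{F}\|_{s-1}^2$, and then invoking the \emph{time-integrated} decay $\int_0^\infty\|u_i(\tau)\|_{s-1}^2\,d\tau\le C\eps^4$ from Lemma \ref{convu}; the dissipative-in-time structure of the ion velocity is the essential ingredient, not the size of $\mathcal{N}_i$.

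Two smaller remarks. First, the parabolic profile $\bar\rho_i^1$ and the $L^1_tL^1_x$ bound on $\pa_x^\al\D\Delta\bar\rho_i^1$ that you invoke in your third paragraph play no role in this lemma: they enter only in the stream-function estimates for the ion system (Lemmas \ref{lemma4.2}--\ref{lemma4.4}), so importing them here over-complicates the argument without fixing the gap above. Second, the paper controls the off-diagonal cross term via the cancellation $B_{12}(\bar\rho_e,\nabla\bar\rho_e)^\top=-B_{21}(\bar\rho_e,\nabla\bar\rho_e)$ together with a Taylor expansion in $\mathcal{N}_e$, $\nabla\mathcal{N}_e$; your write-up does not address this term at all, though in the end it lands in the same $C\delta\mathcal{D}$ bucket.
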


\begin{proof} The error system for the electrons are of the form
\begin{equation*}
\begin{cases}
 \pt \mathcal{N}_e +\rho_e \dive w_e+\nabla\mathcal{N}_e\cdot u_e+\nabla\bar{\rho}_e\cdot w_e=- \mathcal{N}_e \dive\bar{u}_e,\\
 \pt w_e+(u_e \cdot \nabla) w_e+h'_e(\rho_e)\nabla\mathcal{N}_e+\mathcal{N}_e\nabla h'_e(\bar{\rho}_e)=-(w_e \cdot \nabla) \bar{u}_e
     -\mathcal{F}-w_e-r(\bar{\rho}_e,\mathcal{N}_e),
\end{cases}    
\end{equation*}
where
\begin{equation*}
    r(\bar{\rho}_e,\mathcal{N}_e)=(h'_e(\rho_e)-h'_e(\bar{\rho}_e)-h''_e(\bar{\rho}_e)\mathcal{N}_e)\nabla \bar{\rho}_e.
\end{equation*}

It can be written into the following first-order quasi-linear system:
\begin{equation}\label{16}
    \pt \mathcal{V}+\sum_{j=1}^3 A_j(\rho_e,u_e) \partial_{x_j} \mathcal{V}+ L(\bar{\rho}_e)\mathcal{V}=f,
\end{equation}
with 
\begin{equation*}
    \mathcal{V}=\begin{pmatrix}
    \mathcal{N}_e \\
    w_e
    \end{pmatrix}, \ \ \
    A_j(\rho_e,u_e)=\begin{pmatrix}
    u_{e}^j & \rho_e \xi_j^\top\\
    h'_e(\rho_e)\xi_j& u_{e}^j {\bf{I}}_3
    \end{pmatrix},\ \ \ 
    j=1,2,3,
\end{equation*}
and
\begin{equation*}
    L(\bar{\rho}_e)=\begin{pmatrix}
    0 & (\nabla \bar{\rho}_e)^\top\\
    \nabla h'_e(\bar{\rho}_e) & 0
    \end{pmatrix}, \quad 
    f=\begin{pmatrix}
    - \mathcal{N}_e \dive\bar{u}_e\\
    -(w_e \cdot \nabla) \bar{u}_e
     -\mathcal{F}-w_e-r(\bar{\rho}_e,\mathcal{N}_e)
     \end{pmatrix},
\end{equation*}
where $u_e=(u_e^1, u_e^2, u_e^3)$, ${\bf{I}}_3$ is the $3\times 3$ unit matrix and $\{\xi_k\}_{k=1}^3$ is the canonical basis of $\R^3$. 
For all $|\alpha| \leq s-1$, applying $\pa_x^\al$ to \eqref{16}, we obtain
\begin{equation}\label{17}
    \pt \pa_x^\al \mathcal{V}+\sum_{j=1}^3 A_j(\rho_e,u_e) \partial_{x_j} \pa_x^\al \mathcal{V}+ L(\bar{\rho}_e)\pa_x^\al \mathcal{V}=\pa_x^\al f+g^\alpha,
\end{equation}
in which
\begin{equation*}
    g^\alpha=\sum_{j=1}^3 A_j(\rho_e,u_e) \partial_{x_j} \pa_x^\al \mathcal{V}-\pa_x^\al \left ( \sum_{j=1}^3 A_j(\rho_e,u_e) \partial_{x_j} \mathcal{V} \right )+L(\bar{\rho}_e)\pa_x^\al \mathcal{V}-\pa_x^\al \left (L(\bar{\rho}_e)\mathcal{V} \right ).
\end{equation*}
Defining the symmetrizer $A_0(\rho_e)$ as follows
\begin{equation*}
    A_0(\rho_e)=\begin{pmatrix}
    h'_e(\rho_e) & 0\\
    0 & \rho_e {\bf{I}}_3
    \end{pmatrix},
\end{equation*}
and taking the inner product of \eqref{17} with $2A_0(\rho_e)\pa_x^\al \mathcal{V}$ in $L^2$ yields the following energy equality:
\begin{equation}\label{18}
\begin{aligned}
    \frac{d}{dt} \left< A_0(\rho_e)\pa_x^\al \mathcal{V}, \pa_x^\al \mathcal{V} \right> = & \left< \pt A_0(\rho_e)\pa_x^\al \mathcal{V}, \pa_x^\al \mathcal{V} \right>+\left<  B(\mathcal{V},\nabla\mathcal{V})\pa_x^\al \mathcal{V}, \pa_x^\al \mathcal{V} \right>\\
    & +2\left<  A_0(\rho_e)g^\alpha, \pa_x^\al \mathcal{V} \right>+2\left<  A_0(\rho_e)\pa_x^\al f, \pa_x^\al \mathcal{V} \right>\\
    \overset{def}{=}& J_1^\alpha+J_2^\alpha+J_3^\alpha+J_4^\alpha,
\end{aligned}
\end{equation}
with the natural correspondence of $J_1^\alpha$ to $J_4^\alpha$ and the matrix $B(\mathcal{V},\nabla\mathcal{V})$ is defined as follows

\begin{equation*}
\begin{aligned}
    B(\mathcal{V},\nabla\mathcal{V})&=\begin{pmatrix}
    \dive(h'_e(\rho_e)u_e) & (\nabla p'(\rho_e)-2h'_e(\rho_e)\nabla\bar{\rho}_e)^\top\\
    \nabla p'(\rho_e)-2\rho_e \nabla h'_e(\bar{\rho}_e) & \dive(\rho_e u_e){\bf{I}}_3
    \end{pmatrix}\\
    &\overset{def}{=} \begin{pmatrix}
    B_{11} & B_{12} \\
    B_{21}& B_{22}
    \end{pmatrix}.
\end{aligned}
\end{equation*}
For $J_1^\alpha$, we have
\begin{equation}\label{19}
| J_1^\alpha| \leq \left\|\pt A_0(\rho_e) \right\|_\infty \left\|\pa_x^\al \mathcal{V} \right\|^2 \leq C\delta \mathcal{D}(t).
\end{equation}
For $J_2^\alpha$, direct calculations give
\begin{equation*}
    J_2^\alpha=\left< B_{11} \pa_x^\al\mathcal{N}_e ,\pa_x^\al \mathcal{N}_e \right>+\left< B_{22} \pa_x^\al w_e ,\pa_x^\al w_e \right>+\left< (B_{12}+B_{21}^\top) \pa_x^\al\mathcal{N}_e ,\pa_x^\al w_e \right>,
\end{equation*}
where
\begin{equation}\label{21}
    \left< B_{11} \pa_x^\al\mathcal{N}_e ,\pa_x^\al \mathcal{N}_e \right> \leq 
    \left\| B_{11} \right\|_\infty \left\| \pa_x^\al \mathcal{N}_e \right\|^2 \leq C\delta \left\| \pa_x^\al \mathcal{N}_e \right\|^2,
\end{equation}
and
\begin{equation}\label{22}
    \left< B_{22} \pa_x^\al w_e ,\pa_x^\al w_e \right> \leq 
    \left\| B_{22} \right\|_\infty \left\| \pa_x^\al w_e \right\|^2 \leq C\delta \left\| \pa_x^\al w_e \right\|^2.
\end{equation}

From Taylor Expansion we have,
\begin{equation*}
    B_{12}(\rho_e,\nabla\rho_e)=B_{12}(\bar{\rho}_e,\nabla \bar{\rho}_e)+\frac{\partial B_{12}(\rho_e^a,\nabla \rho_e^b)}{\partial \rho_e}\mathcal{N}_e+\frac{\partial B_{12}(\rho_e^a,\nabla \rho_e^b)}{\partial (\nabla \rho_e)}\nabla \mathcal{N}_e,
\end{equation*}
and
\begin{equation*}
    B_{21}(\rho_e,\nabla\rho_e)=B_{21}(\bar{\rho}_e,\nabla \bar{\rho}_e)+\frac{\partial B_{21}(\rho_e^c,\nabla \rho_e^d)}{\partial \rho_e}\mathcal{N}_e+\frac{\partial B_{21}(\rho_e^c,\nabla \rho_e^d)}{\partial (\nabla \rho_e)}\nabla \mathcal{N}_e,
\end{equation*}
with $\rho_e^a$, $\rho_e^b$, $\rho_e^c$ and $\rho_e^d$ being between $\rho_e$ and $\bar{\rho}_e$.
Note that,
\begin{equation*}
\frac{\partial B_{12}}{\partial \rho_e}=\left( p_e'''(\rho_e)\nabla \rho_e-2h''_e(\rho_e) \nabla \bar{\rho}_e \right )^\top,\quad 
\frac{\partial B_{12}}{\partial (\nabla \rho_e)}=p_e''(\rho_e){\bf{I}}_3,
\end{equation*}
and
\begin{equation*}
\frac{\partial B_{21}}{\partial \rho_e}= p_e'''(\rho_e)\nabla \rho_e-2\nabla h'_e(\rho_e),\quad 
\frac{\partial B_{21}}{\partial (\nabla \rho_e)}=p_e''(\rho_e){\bf{I}}_3.
\end{equation*}
Also since
\begin{equation*}
    \left (B_{12}(\bar{\rho}_e,\nabla \bar{\rho}_e) \right)^\top=-B_{21}(\bar{\rho}_e,\nabla \bar{\rho}_e),
\end{equation*}
we have
\begin{equation}\label{20}
    \left< (B_{12}+B_{21}^\top) \pa_x^\al\mathcal{N}_e ,\pa_x^\al w_e \right> \leq \left\|B_{12}+B_{21}^\top \right\|_\infty \left\|\pa_x^\al \mathcal{N}_e\right\| \left\|\pa_x^\al w_e\right\| \leq C\delta \left (\left\|\pa_x^\al \mathcal{N}_e\right\|^2+\left\|\pa_x^\al w_e\right\|^2 \right).
\end{equation}
Combining \eqref{21}, \eqref{22} and \eqref{20} we conclude
\begin{equation}\label{23}
    |J_2^\alpha| \leq C\delta \mathcal{D}(t).
\end{equation}

For $J_3^\alpha$, by Moser-type inequality and Young's inequality we have
\begin{eqnarray}\label{24}
        &&\frac{1}{2}|J_3^\alpha| \nonumber \leq C\delta \mathcal{D}(t).
\end{eqnarray}

For $J_4^\alpha$, by Moser-type calculus inequalities,  we have
\begin{eqnarray}\label{31}
         J_4^\alpha \leq  -\left\langle 2\rho_e \pa_x^\al \calf , \pa_x^\al w_e \right\rangle-  \left\|\pa_x^\al w_e \right\|^2- C\delta\mathcal{D}(t). 
\end{eqnarray}
Noticing the equivalence of $\left< A_0(\rho_e)\pa_x^\al \mathcal{V}, \pa_x^\al \mathcal{V} \right>$ and $\left\|\pa_x^\al \mathcal{V}\right\|^2$ as $A_0$ is uniformly bounded, integrating \eqref{18} over $[0,t]$ and combining \eqref{19}, \eqref{23}, \eqref{24} and \eqref{31} yields
\begin{equation}\label{N_e}
    \left\|\pa_x^\al \mathcal{N}_e(t)\right\|^2+\left\|\pa_x^\al w_e(t)\right\|^2+ \int_{0}^{t}\left\|\pa_x^\al w_e(\tau)\right\|^2 d\tau+2\int_{0}^{t} \left\langle \rho_e \pa_x^\al \calf , \pa_x^\al w_e \right\rangle d\tau \leq  C\eps^2+C\delta \int_0^t \mathcal{D}(\tau)d\tau.
\end{equation}

Now it remains to estimate the last term on the left hand side of the above inequality. Since
\[
\left<\rho_e\pa_x^\al\calf,\pa_x^\al w_e\right>=\left<\pa_x^\al (\rho_e w_e), \pa_x^\al \calf\right>-\left<\pa_x^\al (\rho_e w_e)-\rho_e\pa_x^\al w_e,\pa_x^\al\calf\right>,
\]
in which 
\[
\left|\left<\pa_x^\al (\rho_e w_e)-\rho_e\pa_x^\al w_e,\pa_x^\al\calf\right>\right|\leq C\|\D \rho_e\|_{s-1}\|w_e\|_{s-1}\|\pa_x^\al \calf\|\leq C\delta \mathcal{D}(t).
\]
In addition, 
\begin{eqnarray*}
\left<\pa_x^\al (\rho_e w_e), \pa_x^\al \calf\right>=\left<\pt\pa_x^\al\Ne,\pa_x^\al(\vp-\bar{\vp})\right>-\left<\pa_x^\al(\Ne\bar{u}_e),\pa_x^\al \calf\right>,
\end{eqnarray*}
in which 
\[
\left|\left<\pa_x^\al(\Ne\bar{u}_e),\pa_x^\al \calf\right>\right|\leq C\|\bar{u}_e\|_s\|\Ne\|_{s-1}\|\pa_x^\al \calf\|\leq C\delta\mathcal{D}(t),
\]
and
\begin{eqnarray*}
\left<\pt\pa_x^\al\Ne,\pa_x^\al(\vp-\bar{\vp})\right>&=&\left<\pt\pa_x^\al(\Ne-\Ni),\pa_x^\al(\vp-\bar{\vp})\right>+\left<\pa_x^\al(\rho_iu_i),\pa_x^\al\calf\right>\nonumber\\
&\leq& -\dfrac{1}{2}\dfrac{d}{dt}\|\pa_x^\al\calf\|^2+C\|\eps^{-1}u_i\|_{s-1}^2+C\eps^2\|\calf\|_{s-1}^2.
\end{eqnarray*}
Substituting these estimates into \eqref{N_e} and summing the resulting equation for all $|\al|\leq s-1$ yield \eqref{regue}. 
\end{proof}

\subsection{Application of the Stream Function Technique} It remains to establish the dissipative estimates for $\Ni$ and $\Ne$, in which the stream function technique is necessary. We first construct the stream functions. 

We write equation \eqref{4.1} into the following
\[
\pt \bar{\rho}_i^1+\dive (\bar{u}_i - \nabla \bar{\rho}_i^1)=0.
\]
Subtracting the above equation multiplied by $\eps^2$ from the mass conservative equation for ions leads
\begin{equation*}
    \partial_t(\Ni-\eps^2\bar{\rho}_i^1)+\dive(\rho_i u_i-\eps^2\bar{u}_i + \eps^2 \nabla \bar{\rho}_i^1)=0.
\end{equation*}
in which further noticing that $u_i=\eps^2(w_i+\bar{u}_i)$, we have
\begin{equation*}
    \partial_t(\Ni-\eps^2\bar{\rho}_i^1)+\dive(\eps^2 \rho_i w_i + \eps^2\Ni\bar{u}_i + \eps^2 \nabla \bar{\rho}_i^1)=0.
\end{equation*}

Following the methods in constructing stream functions in \cite{Zhao2021a}, there exist a unique stream function $\psi_i$ and a remaining function $M_i$ satisfying
\begin{equation*}
\begin{cases}
\partial_t \psi_i=\eps^2 \rho_i w_i + \eps^2\Ni\bar{u}_i + \eps^2 \nabla \bar{\rho}_i^1 +\nabla \times M_i,\\
\dive\psi_i=-\Ni + \eps^2 \bar{\rho}_i^1, \quad \nabla \times \psi_i=0,\\
\displaystyle\int_{\T^3} \psi_i dx=0.
\end{cases}
\end{equation*}
By Poincar\'e inequality, we have
\[
\|\psi_i\|_{s-1}^2\leq C\|\D\psi_i\|_{s-1}^2\leq C\|\dive\psi_i\|_{s-1}^2+C\|\D\times \psi_i\|_{s-1}^2\leq C\|\Ni\|_{s-1}^2+C\eps^4\|\bar{\rho}_i^1\|_{s-1}^2.
\]
Similarly, the error conservative equation for electrons is of the form
\[
\pt \Ne+\dive(\rho_eu_e-\bar{\rho}_e\bar{u}_e)=0,
\]
which implies that there exist a unique stream function $\psi_e$ and a remaining function $M_e$ satisfying
\[
\begin{cases}
\pt\psi_e=\rho_e u_e-\bar{\rho}_e\bar{u}_e+\nabla \times M_e,\\
\dive\psi_e=-\mathcal{N}_e,\quad \nabla \times \psi_e=0,\\
\displaystyle\int_{\mathbb{T}^3} \psi_e dx=0.
\end{cases}
\]

\subsection*{Applications to the Ion System} Now we apply the stream function technique to the error system. We first treat the ion system. The error equation for the momentum equation for ions reads
\[
\partial_t w_i+w_i+\eps^{-2}(u_i \cdot \nabla) u_i+\nabla h_i(\rho_i)=\mathcal{F}.
\]
For multi-indices $\al\in\N^3$ with $|\al|\leq s-1$, applying $\partial^\al_x$ to both sides of and taking the inner product of the resulting equation with $\rho_i \partial^\al_x \psi_i$, then we have
\begin{eqnarray*}
    &&\left\langle \pa_x^\al \calf, \rho_i \partial^\al_x \psi_i\right\rangle\nonumber\\
    & =&\left\langle \partial_t \partial^\al_x w_i, \rho_i \partial^\al_x \psi_i \right\rangle + \left\langle \partial^\al_x w_i, \rho_i \partial^\al_x \psi_i \right\rangle+ \eps^{-2} \left\langle \partial^\al_x ((u_i \cdot \nabla) u_i), \rho_i \partial^\al_x \psi_i \right\rangle + \left\langle \partial^\al_x \nabla h_i(\rho_i), \rho_i \partial^\al_x \psi_i \right\rangle\nonumber\\
    & \overset{def}{=}&K_i^1+K_i^2+K_i^3+K_i^4,
\end{eqnarray*}
with the natural correspondence of $K_i^1$ to $K_i^4$, which are treated term by term in a series of lemmas as follows.

\begin{lemma}\label{lemma4.2}(Estimate of $K_i^1$)
For all $|\alpha| \leq s-1$, it holds
\begin{equation}\label{4.35}
     K_i^1 \geq \frac{d}{dt} \left\langle \rho_i \partial^\al_x w_i, \partial^\al_x \psi_i \right\rangle+\frac{1}{4\eps^2} \|\nabla \times \partial^\al_x M_i\|^2-C\eps^2(\mathcal{D}(t)+\|w_i\|_{s-1}^2+\|\nabla \bar{\rho}_i^1\|_{s-1}^2)-\frac{C}{\eps^2}\|\psi_i\|^2_{s-1}\|u_i\|^2_{s}.
\end{equation}
\end{lemma}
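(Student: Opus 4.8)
The plan is to eliminate the time derivative in $K_i^1=\langle\pt\pa_x^\al w_i,\rho_i\pa_x^\al\psi_i\rangle$ by integrating by parts in $t$, which produces the total-derivative term appearing in \eqref{4.35} together with two remainders, and then to feed the defining equation of the stream function into the remainder carrying $\pt\psi_i$. Concretely,
\[
K_i^1=\frac{d}{dt}\left\langle\rho_i\pa_x^\al w_i,\pa_x^\al\psi_i\right\rangle-\left\langle(\pt\rho_i)\pa_x^\al w_i,\pa_x^\al\psi_i\right\rangle-\left\langle\rho_i\pa_x^\al w_i,\pa_x^\al\pt\psi_i\right\rangle,
\]
and into the last term one substitutes $\pt\psi_i=\eps^2\rho_i w_i+\eps^2\Ni\bar{u}_i+\eps^2\D\bar{\rho}_i^1+\D\times M_i$, splitting it into four pieces.

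The heart of the argument is the piece containing $\D\times M_i$, namely $-\langle\rho_i\pa_x^\al w_i,\pa_x^\al(\D\times M_i)\rangle$. Here I would use the stream-function identity a second time, in the form $\rho_i w_i=\eps^{-2}\pt\psi_i-\eps^{-2}\D\times M_i-\Ni\bar{u}_i-\D\bar{\rho}_i^1$, commute $\rho_i$ past $\pa_x^\al$ (the commutator $[\pa_x^\al,\rho_i]w_i$ being lower order), and thereby rewrite this piece as
\[
\eps^{-2}\|\D\times\pa_x^\al M_i\|^2-\eps^{-2}\left\langle\pa_x^\al\pt\psi_i,\D\times\pa_x^\al M_i\right\rangle+\left\langle\pa_x^\al(\Ni\bar{u}_i)+\pa_x^\al\D\bar{\rho}_i^1+[\pa_x^\al,\rho_i]w_i,\ \D\times\pa_x^\al M_i\right\rangle.
\]
The second term vanishes: $\psi_i$ is curl-free and mean-zero on $\T^3$, hence a gradient, so $\pa_x^\al\pt\psi_i$ is a gradient and its $L^2$-pairing with the curl $\D\times\pa_x^\al M_i$ is zero. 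The first term is exactly the good term we want to keep; the three remaining pairings are handled by Young's inequality with an $\eps^{-2}$-weight, each costing at most $\tfrac1{12}\eps^{-2}\|\D\times\pa_x^\al M_i\|^2$ plus $C\eps^2$ times, respectively, $\|\Ni\bar{u}_i\|_{s-1}^2$, $\|\D\bar{\rho}_i^1\|_{s-1}^2$, $\|[\pa_x^\al,\rho_i]w_i\|^2$. Using the algebra property of $H^{s-1}$ (valid since $s\ge3$, $d=3$), Moser-type estimates, the smallness $\|\bar{u}_i\|_{s-1}+\|\D\rho_i\|_{s-1}\le C\delta$, and $\|\Ni\|_{s-1}^2\le\mathcal{D}(t)$, these are bounded by $\tfrac14\eps^{-2}\|\D\times\pa_x^\al M_i\|^2+C\eps^2(\mathcal{D}(t)+\|w_i\|_{s-1}^2+\|\D\bar{\rho}_i^1\|_{s-1}^2)$, so that $\tfrac34\eps^{-2}\|\D\times\pa_x^\al M_i\|^2\ge\tfrac14\eps^{-2}\|\D\times\pa_x^\al M_i\|^2$ survives.

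It then remains to dispatch the other remainders. For $-\langle(\pt\rho_i)\pa_x^\al w_i,\pa_x^\al\psi_i\rangle$ I use $\pt\rho_i=-\dive(\rho_i u_i)$ and the embedding $H^{s-1}\hookrightarrow L^\infty$ to get $\|\pt\rho_i\|_\infty\le C\|u_i\|_s$, and Young's inequality then yields $C\eps^2\|w_i\|_{s-1}^2+C\eps^{-2}\|\psi_i\|_{s-1}^2\|u_i\|_s^2$, which is the origin of the last term of \eqref{4.35}. For the three pieces $-\eps^2\langle\rho_i\pa_x^\al w_i,\pa_x^\al(\rho_i w_i)\rangle$, $-\eps^2\langle\rho_i\pa_x^\al w_i,\pa_x^\al(\Ni\bar{u}_i)\rangle$ and $-\eps^2\langle\rho_i\pa_x^\al w_i,\pa_x^\al\D\bar{\rho}_i^1\rangle$, a commutator splitting sends the principal part $-\eps^2\|\rho_i\pa_x^\al w_i\|^2$ of the first to a favorable sign, and Moser-type inequalities together with $\|\Ni\|_{s-1}^2\le\mathcal{D}(t)$ bound everything else by $C\eps^2(\mathcal{D}(t)+\|w_i\|_{s-1}^2+\|\D\bar{\rho}_i^1\|_{s-1}^2)$. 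Summing over $|\al|\le s-1$ is unnecessary here since \eqref{4.35} is stated for fixed $\al$. The only genuine difficulty is bookkeeping around the singular weight $\eps^{-2}$: one must track all the $\D\times M_i$ cross-terms and apportion the $\eps^{-2}$-weighted Young inequalities so that the coefficient $\tfrac14$ of the good term is preserved, and verify that every $M_i$-error term recovers a compensating factor $\eps^2$ — which it does precisely because $\pt\psi_i$ carries $\eps^2$ in front of $\rho_i w_i$, $\Ni\bar{u}_i$ and $\D\bar{\rho}_i^1$.
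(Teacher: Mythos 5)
Your proposal is correct and follows essentially the same route as the paper's proof: integrate by parts in $t$ to produce the total derivative, substitute the defining equation $\pt\psi_i=\eps^2\rho_i w_i+\eps^2\Ni\bar u_i+\eps^2\D\bar\rho_i^1+\D\times M_i$, re-express $\rho_i w_i$ through that same equation inside the $\D\times M_i$ pairing to extract the good term $\eps^{-2}\|\D\times\pa_x^\al M_i\|^2$, kill the $\langle\pa_x^\al\pt\psi_i,\D\times\pa_x^\al M_i\rangle$ cross term by the curl-free structure of $\psi_i$, and absorb the remainders by Young and Moser-type inequalities. The only differences are cosmetic (your $\tfrac1{12}$-apportionment of the Young weights versus the paper's $\tfrac12$ and $\tfrac14$ split, and your making explicit the gradient--curl orthogonality that the paper leaves implicit).
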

\begin{proof}
By integration by parts, we have
\[
K_i^1 = \left\langle \partial_t(\rho_i \partial^\al_x w_i), \partial^\al_x \psi_i \right\rangle - \left\langle \partial_t \rho_i \partial^\al_x w_i, \partial^\al_x \psi_i \right\rangle,
\]
in which 
\[
\left\langle \partial_t(\rho_i \partial^\al_x w_i), \partial^\al_x \psi_i \right\rangle= \frac{d}{dt} \left\langle \rho_i \partial^\al_x w_i, \partial^\al_x \psi_i \right\rangle - \left\langle \rho_i \partial^\al_x w_i, \partial_t \partial^\al_x \psi_i \right\rangle.
\]
Direct calculations give
\begin{equation*}
    \left|\left\langle \partial_t \rho_i \partial^\al_x w_i, \partial^\al_x \psi_i \right\rangle\right| \leq C\|u_i\|_{s}\|\pa_x^\al w_i\|\|\pa_x^\al \psi_i\|\leq C(\|\eps w_i\|_{s-1}^2+\frac{1}{\eps^2} \|\psi_i\|^2_{s-1}\|u_i\|^2_{s}).
\end{equation*}
Note that
\begin{eqnarray*}
   -\left\langle \rho_i \partial^\al_x w_i, \partial_t \partial^\al_x \psi_i \right\rangle &=& - \left\langle \rho_i \partial^\al_x w_i, \eps^2 \partial^\al_x(\rho_i w_i +\bar{u}_i \Ni) \right\rangle - \left\langle \rho_i \partial^\al_x w_i, \eps^2\partial^\al_x \nabla \bar{\rho}_i^1 \right\rangle - \left\langle \rho_i \partial^\al_x w_i, \nabla \times \partial^\al_x M_i \right\rangle\nonumber\\
   &\geq & -C\eps^2(\|w_i\|_{s-1}^2+\|\nabla \bar{\rho}_i^1\|_{s-1}^2 + \mathcal{D}(t))- \left\langle \rho_i \partial^\al_x w_i, \nabla \times \partial^\al_x M_i \right\rangle,
\end{eqnarray*}
where
\begin{equation*}
\begin{aligned}
-\left\langle \rho_i \partial^\al_x w_i, \nabla \times \partial^\al_x M_i \right\rangle = - \left\langle \partial^\al_x (\rho_i w_i), \nabla \times \partial^\al_x M_i \right\rangle+\left\langle \partial^\al_x (\rho_i w_i)-\rho_i \partial^\al_x w_i, \nabla \times \partial^\al_x M_i \right\rangle.
\end{aligned}
\end{equation*}
Since $\psi_i$ is rotation free,  by Young's inequality we have
\begin{eqnarray*}
    - \left\langle \partial^\al_x (\rho_i w_i), \nabla \times \partial^\al_x M_i \right\rangle &= & -\frac{1}{\eps^2} \left\langle \partial_t \partial^\al_x \psi_i, \nabla \times \partial^\al_x M_i \right\rangle + \left\langle \partial^\al_x (\bar{u}_i \Ni), \nabla \times \partial^\al_x M_i \right\rangle\nonumber\\
    && + \left\langle \partial^\al_x \nabla \bar{\rho}_i^1, \nabla \times \partial^\al_x M_i \right\rangle + \frac{1}{\eps^2} \left\langle \nabla \times \partial^\al_x M_i, \nabla \times \partial^\al_x M_i \right\rangle\nonumber\\
    &\geq & \frac{1}{2\eps^2} \|\nabla \times \partial^\al_x M_i\|^2-C\eps^2\mathcal{D}(t),\nonumber
\end{eqnarray*}
and by Moser-type inequalities and the Young's inequality, we have
\begin{equation*}
    \left|\left\langle \partial^\al_x (\rho_i w_i)-\rho_i \partial^\al_x w_i, \nabla \times \partial^\al_x M_i \right\rangle\right| \leq  C \|\Ni\|_{s-1} \|w_i\|_{s-1} \|\nabla \times \partial^\al_x M_i\|  \leq  C\eps^2\mathcal{D}(t)+ \dfrac{1}{4\eps^2}\|\nabla \times \partial^\al_x M_i\|^2.    
\end{equation*}
Thus we have
\begin{equation*}
    -\left\langle \rho_i \partial^\al_x w_i, \nabla \times \partial^\al_x M_i \right\rangle \geq \frac{1}{4\eps^2} \|\nabla \times \partial^\al_x M_i\|^2-C\eps^2\mathcal{D}(t).
\end{equation*}
Consequently
\begin{equation*}
-\left\langle \rho_i \partial^\al_x w_i, \partial_t \partial^\al_x \psi_i \right\rangle \geq \frac{1}{4\eps^2} \|\nabla \times \partial^\al_x M_i\|^2-C\eps^2(\mathcal{D}(t)+\|\nabla \bar{\rho}_i^1\|_{s-1}^2+\|w_i\|^2_{s-1}).
\end{equation*}
Combining all these estimates implies \eqref{4.35}. 
\end{proof}

\begin{lemma}\label{lemma4.3}(Estimates for $K_i^2$)
For all $|\alpha| \leq s-1$, it holds
\begin{equation}\label{k2integral}
K_i^2\geq \frac{1}{2\eps^2} \frac{d}{dt} \|\partial^\al_x \psi_i\|^2-C\eps^2\mathcal{D}(t) - C\|\psi_i\|_{s-1}^2 \left(\dfrac{1}{\eps^2}(\|w_i\|_{s-1}^2+\|\bar{u}_i\|^2_{s-1}) + \int_{\mathbb{R}^3} |\pa_x^{2\al} \D \Delta \bar{\rho}^1_i (t,x)| dx \right).
\end{equation}
\end{lemma}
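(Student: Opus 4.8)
\emph{Proof plan.} The idea is to repeat, for $K_i^2$, the scheme already used for $K_i^1$ in Lemma~\ref{lemma4.2}: substitute the evolution law of the ion stream function $\psi_i$ so that $K_i^2$ releases the dissipative time--derivative $\frac{1}{2\eps^2}\frac{d}{dt}\|\pa_x^\al\psi_i\|^2$, kill the curl contribution by $\nabla\times\psi_i=0$, and estimate the remainder by $C\eps^2\mathcal{D}(t)$ plus terms quadratic in $\psi_i$ whose prefactors are integrable in time (the latter will be absorbed by a Gronwall argument when the final energy estimate for $\Ni$ and $\Ne$ is closed).

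First I would divide the first equation of the $\psi_i$--system by $\eps^2$ to obtain the pointwise identity $\rho_i w_i=\eps^{-2}\pt\psi_i-\Ni\bar u_i-\nabla\bar\rho_i^1-\eps^{-2}\nabla\times M_i$. Applying $\pa_x^\al$, moving the scalar $\rho_i$ through the $L^2$ pairing and substituting this identity yields
\begin{align*}
K_i^2={}&\frac{1}{2\eps^2}\frac{d}{dt}\|\pa_x^\al\psi_i\|^2-\big\langle[\pa_x^\al,\rho_i]w_i,\pa_x^\al\psi_i\big\rangle-\big\langle\pa_x^\al(\Ni\bar u_i),\pa_x^\al\psi_i\big\rangle\\
&-\big\langle\pa_x^\al\nabla\bar\rho_i^1,\pa_x^\al\psi_i\big\rangle-\eps^{-2}\big\langle\nabla\times\pa_x^\al M_i,\pa_x^\al\psi_i\big\rangle,
\end{align*}
where the last term is exactly zero since, integrating by parts on $\T^3$, it equals $\eps^{-2}\langle\pa_x^\al M_i,\pa_x^\al(\nabla\times\psi_i)\rangle=0$. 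For the commutator and the $\Ni\bar u_i$ term I would use the Moser-type/commutator inequalities together with the uniform bound $\|\Ni\|_{s-1}\le C\eps$ inherited from Theorem~\ref{Thm2.2}, which reduces them to $C\|\nabla\Ni\|_{s-1}\|w_i\|_{s-1}\|\psi_i\|_{s-1}$ and $C\eps\|\bar u_i\|_{s-1}\|\psi_i\|_{s-1}$; Young's inequality, applied so that each surviving $\|\pa_x^\al\psi_i\|^2$ carries the weight $\eps^{-2}$ next to $\|w_i\|_{s-1}^2$ (resp. $\|\bar u_i\|_{s-1}^2$), then produces the two middle pieces of \eqref{k2integral}, the leftover contributions going into $C\eps^2\mathcal{D}(t)$ after also invoking $\|\psi_i\|_{s-1}^2\le C\|\Ni\|_{s-1}^2+C\eps^4\|\bar\rho_i^1\|_{s-1}^2\le C\eps^2$ (divergence/curl decomposition of $\psi_i$ and Lemma~\ref{regurhoi1}).

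The remaining term $-\langle\pa_x^\al\nabla\bar\rho_i^1,\pa_x^\al\psi_i\rangle$ is the crux. Estimating it crudely by $\|\nabla\bar\rho_i^1\|_{s-1}\|\psi_i\|_{s-1}$ is fatal, since $\nabla\bar\rho_i^1$ is only square-integrable in time and the time integral would grow like $\sqrt{t}$. Instead I would exploit that $\psi_i$ is curl-free: writing $\psi_i=\nabla\phi_i$ with $\Delta\phi_i=-\Ni+\eps^2\bar\rho_i^1$ and integrating by parts to transfer all the spatial derivatives in $\pa_x^\al\nabla\bar\rho_i^1$ onto $\bar\rho_i^1$ (with a Laplacian moved onto $\phi_i$), one controls the pairing by the $L^1$--$L^\infty$ estimate $\|\phi_i\|_{W^{1,\infty}}\int_{\R^3}|\pa_x^{2\al}\nabla\Delta\bar\rho_i^1(t,x)|\,dx$; since $|2\al|\le 2s-2$, the parabolic-smoothing $L^1$-in-space bound of Lemma~\ref{regurhoi1} applies, and $\|\phi_i\|_{W^{1,\infty}}\le C\|\psi_i\|_{s-1}$ by Poincar\'e and Sobolev embedding, giving exactly the last term $C\|\psi_i\|_{s-1}^2\int_{\R^3}|\pa_x^{2\al}\nabla\Delta\bar\rho_i^1|\,dx$ of \eqref{k2integral}, with any lower-order remainders absorbed into $C\eps^2\mathcal{D}(t)$. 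Collecting the bounds for the five pieces gives \eqref{k2integral}.

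I expect this last term to be the main obstacle, for precisely the reason above: it carries no gain in $\eps$ and only $L^2$-in-time integrability, so it cannot be disposed of as an ordinary Moser remainder and forces one to combine the potential structure of the stream function with the heat-kernel decay estimates for the first-order profile $\bar\rho_i^1$ established in Lemma~\ref{regurhoi1}. A secondary, purely technical difficulty is the $\eps$-bookkeeping in the commutator and $\Ni\bar u_i$ terms: the coefficients of $\|\psi_i\|_{s-1}^2$ must be tuned so as to be integrable in time uniformly in $\eps$, which is what fixes the precise weights appearing in the statement.
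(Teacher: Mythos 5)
Your argument is essentially identical to the paper's proof: the same splitting of $K_i^2$ into $\left\langle \pa_x^\al(\rho_i w_i), \pa_x^\al\psi_i\right\rangle$ plus a Moser commutator, the same substitution of $\pt\psi_i$ with the curl term annihilated by $\nabla\times\psi_i=0$, the same Young-inequality bookkeeping producing the $\eps^{-2}(\|w_i\|_{s-1}^2+\|\bar{u}_i\|_{s-1}^2)\|\psi_i\|_{s-1}^2$ contributions, and the same integration by parts of $\left\langle\pa_x^\al\nabla\bar{\rho}_i^1,\pa_x^\al\psi_i\right\rangle$ onto $\bar{\rho}_i^1$ controlled via the $L^1$ bound of Lemma \ref{regurhoi1}. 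Be aware that, exactly as in the paper's own write-up, this last step actually yields $C\|\psi_i\|_{s-1}\int_{\mathbb{R}^3}|\pa_x^{2\al}\D\Delta\bar{\rho}_i^1|\,dx$ with the \emph{first} power of $\|\psi_i\|_{s-1}$ rather than the square appearing in \eqref{k2integral}, so your closing claim that this is ``exactly'' the stated term inherits the paper's own imprecision (which matters downstream, where the square is used to extract the $O(\eps^2)$ rate).
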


\begin{proof}
First we have
\begin{equation*}
    K_i^2=\left\langle \rho_i \partial^\al_x w_i, \partial^\al_x \psi_i\right\rangle = \left\langle \partial^\al_x (\rho_i w_i), \partial^\al_x \psi_i\right\rangle +\left\langle \rho_i \partial^\al_x w_i - \partial^\al_x (\rho_i w_i), \partial^\al_x \psi_i\right\rangle,
\end{equation*}
in which by the Moser-type calculus inequalities, 
\[
|\left\langle \rho_i \partial^\al_x w_i - \partial^\al_x (\rho_i w_i), \partial^\al_x \psi_i\right\rangle|\leq C\|\Ni\|_{s-1}\|w_i\|_{s-1}\|\pa_x^\al \psi_i\|\leq C\eps^2 \mathcal{D}(t)+\frac{C}{\eps^2}\|w_i\|^2_{s-1}\|\psi_i\|_{s-1}^2.
\]
Note that
\begin{eqnarray*}
        \left\langle \partial^\al_x (\rho_i w_i), \partial^\al_x \psi_i\right\rangle &= & \frac{1}{\eps^2} \left\langle \partial_t \partial^\al_x \psi_i, \partial^\al_x \psi_i \right\rangle + \left\langle \partial^\al_x (\bar{u}_i \Ni), \partial^\al_x \psi_i \right\rangle\nonumber\\
        && - \left\langle \partial^\al_x \nabla \bar{\rho}_i^1, \partial^\al_x \psi_i \right\rangle - \frac{1}{\eps^2} \left\langle \nabla \times \partial^\al_x M_i, \partial^\al_x \psi_i \right\rangle\nonumber\\
        &\geq& \frac{1}{2\eps^2} \frac{d}{dt} \|\partial^\al_x \psi_i\|^2-\frac{C}{\eps^2}\|\bar{u}_i\|^2_{s-1}\|\psi_i\|^2_{s-1}-C\eps^2\mathcal{D}(t) -  \left\langle \partial^\al_x \nabla \bar{\rho}_i^1, \partial^\al_x \psi_i \right\rangle,
\end{eqnarray*}
while by Poincar\'e inequality and integration by parts we have
\begin{eqnarray*}
     |\left\langle  \partial^\al_x \nabla \bar{\rho}_i^1, \partial^\al_x \psi_i \right\rangle| \leq \int_{\mathbb{R}^3} |\pa_x^\al \nabla \bar{\rho}_i^1 \cdot \pa_x^\al \psi_i | dx \leq C\|\psi_i\|_{s-1} \int_{\mathbb{R}^3} |\pa_x^{2\al} \D \Delta \bar{\rho}^1_i (t,x)| dx 
\end{eqnarray*}
Combining all these estimates yields \eqref{k2integral}.
\end{proof}

\begin{lemma}\label{lemma4.4} (Estimates for $K_i^3$ and $K_i^4$)
For all $|\alpha| \leq s-1$, it holds
\begin{equation}\label{k34integral}
K_i^3+K_i^4 \geq  h_1 \|\partial^\al_x \Ni\|^2 - (C\delta+C\eps^2)\mathcal{D}(t)-\dfrac{C}{\eps^2} \| u_i\|^2_{s-1}-C\eps^{2}\|\D \bar{\rho}_i^1\|_{s-1}^2-\dfrac{C}{\eps^{2}} \|u_i\|^2_{s} \|\psi_i\|^2_{s-1}.
\end{equation}
\end{lemma}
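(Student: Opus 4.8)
The plan is to bound $K_i^3$ and $K_i^4$ separately: $K_i^3$ by plain Cauchy--Schwarz and Young, and $K_i^4$ by an integration by parts that converts the pairing against $\pa_x^\al\psi_i$ into the dissipative term $h_1\|\pa_x^\al\Ni\|^2$ via the defining identity $\dive\psi_i=-\Ni+\eps^2\bar\rho_i^1$. For $K_i^3=\eps^{-2}\left\langle\pa_x^\al((u_i\cdot\nabla)u_i),\rho_i\pa_x^\al\psi_i\right\rangle$, since $H^{s-1}$ is an algebra for $s\ge 3$ one has $\|(u_i\cdot\nabla)u_i\|_{s-1}\le C\|u_i\|_{s-1}\|u_i\|_{s}$, so with $\|\rho_i\|_\infty\le C$ from \eqref{lowerboundrho} and Young's inequality,
\[
|K_i^3|\le\frac{C}{\eps^2}\|u_i\|_{s-1}\|u_i\|_{s}\|\psi_i\|_{s-1}\le\frac{C}{\eps^2}\|u_i\|_{s-1}^2+\frac{C}{\eps^2}\|u_i\|_{s}^2\|\psi_i\|_{s-1}^2,
\]
which already supplies two of the terms on the right of \eqref{k34integral}.

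For $K_i^4=\left\langle\pa_x^\al\nabla h_i(\rho_i),\rho_i\pa_x^\al\psi_i\right\rangle$ I would write $\nabla h_i(\rho_i)=h_i'(\rho_i)\nabla\Ni$ and split off a commutator, $K_i^4=\left\langle h_i'(\rho_i)\pa_x^\al\nabla\Ni,\rho_i\pa_x^\al\psi_i\right\rangle+\left\langle[\pa_x^\al,h_i'(\rho_i)]\nabla\Ni,\rho_i\pa_x^\al\psi_i\right\rangle$. The Moser-type inequality (after subtracting the constant $h_i'(1)$, which commutes with $\pa_x^\al$) gives $\big\|[\pa_x^\al,h_i'(\rho_i)]\nabla\Ni\big\|\le C\|\Ni\|_{s}\|\Ni\|_{s-1}\le C\delta\|\Ni\|_{s-1}$, so combining Young's inequality, the Poincar\'e bound $\|\psi_i\|_{s-1}^2\le C\|\Ni\|_{s-1}^2+C\eps^4\|\bar\rho_i^1\|_{s-1}^2$ and $\|\Ni\|_{s-1}^2\le\mathcal{D}(t)$, this commutator contribution is $\ge-C\delta\mathcal{D}(t)-C\eps^2\|\D\bar\rho_i^1\|_{s-1}^2$; here $\|\bar\rho_i^1\|_{s-1}$ is replaced by $C\|\D\bar\rho_i^1\|_{s-1}$ using that $\bar\rho_i^1$ has zero spatial mean (imposed initially and preserved by \eqref{4.1}).

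The heart of the proof is the main term $\left\langle h_i'(\rho_i)\pa_x^\al\nabla\Ni,\rho_i\pa_x^\al\psi_i\right\rangle$: integrating by parts and using $\dive\pa_x^\al\psi_i=\pa_x^\al\dive\psi_i=-\pa_x^\al\Ni+\eps^2\pa_x^\al\bar\rho_i^1$, it becomes
\[
\left\langle h_i'(\rho_i)\rho_i\pa_x^\al\Ni,\pa_x^\al\Ni\right\rangle-\eps^2\left\langle h_i'(\rho_i)\rho_i\pa_x^\al\Ni,\pa_x^\al\bar\rho_i^1\right\rangle-\left\langle\nabla(h_i'(\rho_i)\rho_i)\cdot\pa_x^\al\psi_i,\pa_x^\al\Ni\right\rangle.
\]
The first term is $\ge h_1\|\pa_x^\al\Ni\|^2$ by \eqref{lowerboundrho} (after adjusting the constant, since $h_i'(\rho_i)\rho_i\ge h_1/2$); the second is $\ge-C\eps^2\mathcal{D}(t)-C\eps^2\|\D\bar\rho_i^1\|_{s-1}^2$ by Cauchy--Schwarz, Poincar\'e and Young; the third is $\ge-C\delta\mathcal{D}(t)-C\eps^2\|\D\bar\rho_i^1\|_{s-1}^2$ because $\|\nabla(h_i'(\rho_i)\rho_i)\|_\infty\le C\|\nabla\Ni\|_\infty\le C\delta$ and $\|\psi_i\|_{s-1}^2$ is again controlled by $C\mathcal{D}(t)+C\eps^4\|\D\bar\rho_i^1\|_{s-1}^2$. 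Adding the $K_i^3$ and $K_i^4$ estimates yields \eqref{k34integral}.

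The routine parts are $K_i^3$ and the commutator and variable-coefficient remainders. The one delicate point is the integration by parts in the main part of $K_i^4$: the minus sign in $\dive\psi_i=-\Ni+\eps^2\bar\rho_i^1$ is exactly what produces the good dissipative term $h_1\|\pa_x^\al\Ni\|^2$, and the $\eps^2\bar\rho_i^1$ defect must be absorbed into the time-integrable quantity $\eps^2\|\D\bar\rho_i^1\|_{s-1}^2$ (not a non-integrable $O(\eps^2)$ constant), which is precisely why the zero-mean normalization of $\bar\rho_i^1$ and the bound $\int_0^t\|\D\bar\rho_i^1\|_{s-1}^2\,d\tau\le C$ from Lemma \ref{regurhoi1} are needed.
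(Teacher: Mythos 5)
Your proof is correct and follows essentially the same route as the paper: both extract the dissipative term from $K_i^4$ by integrating by parts against $\rho_i\pa_x^\al\psi_i$ and invoking $\dive\pa_x^\al\psi_i=-\pa_x^\al\Ni+\eps^2\pa_x^\al\bar{\rho}_i^1$, absorbing the $\eps^2\bar{\rho}_i^1$ defect through the zero-mean Poincar\'e bound into $C\eps^2\|\D\bar{\rho}_i^1\|_{s-1}^2$, and both bound $K_i^3$ by Cauchy--Schwarz and Young exactly as you do. The only cosmetic difference is that the paper Taylor-expands $h_i(\rho_i)-h_i(1)=h_i'(\rho_i^h)\Ni$ before applying $\pa_x^\al\nabla$ (so the leading coefficient in the dissipative term is $h_i'(\rho_i^h)\ge h_1$), whereas your chain-rule-plus-commutator splitting produces the coefficient $p_i'(\rho_i)=h_i'(\rho_i)\rho_i\ge h_1/2$; this changes only the harmless constant in front of $\|\pa_x^\al\Ni\|^2$, as you yourself note.
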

\begin{proof}
For $K_i^3$, by Young's inequality we have
\begin{equation*}
    |K_i^3|\leq  \eps^{-2} \|u_i\|_{s-1} \|u_i\|_{s} \|\psi_i\|_{s-1}
     \leq  \dfrac{C}{\eps^{2}}\|u_i\|_{s-1}^2 + \dfrac{C}{\eps^{2}} \|u_i\|^2_{s} \|\psi_i\|^2_{s-1}  
\end{equation*}
For $K_i^4$, we first notice that by Taylor expansion we have a $\rho_i^h$ between $\rho_i$ and $1$ satisfying
\[
h_i(\rho_i)-h_i(1)=h'_i(\rho_i^h) \Ni.
\]
Hence, 
\begin{equation*}
K_i^4 = -\left\langle \partial^\al_x (h'_i(\rho_i^h) \Ni),\dive(\rho_i \partial^\al_x \psi_i) \right\rangle= - \left\langle \partial^\al_x (h'_i(\rho_i^h) \Ni),\dive(\partial^\al_x \psi_i) \right\rangle - \left\langle \partial^\al_x (h'_i(\rho_i^h) \Ni),\dive(\Ni \partial^\al_x \psi_i) \right\rangle.
\end{equation*}
By the definition of the stream function, we have
\[
   - \left\langle \partial^\al_x (h'_i(\rho_i^h) \Ni),\dive(\partial^\al_x \psi_i) \right\rangle = \left\langle \partial^\al_x (h'_i(\rho_i^h) \Ni), \partial^\al_x\Ni \right\rangle - \eps^2  \left\langle \partial^\al_x (h'_i(\rho_i^h) \Ni), \partial^\al_x \bar{\rho}_i^1 \right\rangle,
\]
where
\begin{eqnarray*}
\left\langle \partial^\al_x (h'_i(\rho_i^h) \Ni), \partial^\al_x\Ni \right\rangle&= & \left\langle h'_i(\rho_i^h) \partial^\al_x \Ni, \partial^\al_x\Ni \right\rangle+\left\langle \partial^\al_x (h'_i(\rho_i^h) \Ni)-h'_i(\rho_i^h) \partial^\al_x \Ni, \partial^\al_x\Ni \right\rangle\\
 &\geq & h_1 \|\partial^\al_x \Ni\|^2 - C\delta \mathcal{D}(t),
\end{eqnarray*}
and by Poincar\'e inequality
\[
    \left|\left\langle \partial^\al_x (h'_i(\rho_i^h) \Ni), \eps^2 \partial^\al_x \bar{\rho}_i^1 \right\rangle\right| \leq C\eps^2\mathcal{D}(t)+ C \eps^2 \|\D \bar{\rho}_i^1\|^2_{s-1}.
\]
Also we have
\[
  \left|\left\langle \partial^\al_x(h'_i(\rho_i^h) \Ni) ,\dive(\Ni \partial^\al_x \psi_i) \right\rangle\right|\leq C\delta\mathcal{D}(t)+C\eps^2\mathcal{D}(t)+C\eps^2\|\D \bar{\rho}_i^1\|_{s-1}^2.
\]
Combining all these estimates, we have \eqref{k34integral}.
\end{proof}

\begin{lemma}(Dissipative estimates for $\Ni$) It holds
\begin{eqnarray}\label{Nifinal}
&&-\int_0^t\sum_{|\al|\leq s-1}\left\langle \pa_x^\al \calf, \rho_i \partial^\al_x \psi_i\right\rangle d\tau+\dfrac{1}{\eps^2}\|\psi_i(t)\|_{s-1}^2+\int_0^t\left(\dfrac{1}{\eps^2}\|\D\times M_i(\tau)\|_{s-1}^2+\|\Ni(\tau)\|_{s-1}^2\right)d\tau\nonumber\\
&\leq& C\eps^2+C\delta\int_0^t\mathcal{D}(\tau)d\tau+\dfrac{C\delta+c_0\mu_0}{\eps^2}\sup_{0\leq \tau \leq t}\|\psi_i(\tau)\|_{s-1}^2,
\end{eqnarray}
where $\mu_0>0$ is a sufficiently small constant, of which the value is determined in \eqref{mu0}.
\end{lemma}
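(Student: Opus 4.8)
The proof consists in inserting the lower bounds \eqref{4.35}, \eqref{k2integral}, \eqref{k34integral} for $K_i^1$, $K_i^2$ and $K_i^3+K_i^4$ into the identity
\[
\sum_{|\al|\le s-1}\big\langle \pa_x^\al\calf,\ \rho_i\pa_x^\al\psi_i\big\rangle=\sum_{|\al|\le s-1}\big(K_i^1+K_i^2+K_i^3+K_i^4\big)
\]
set up just before Lemma \ref{lemma4.2}, then summing over $|\al|\le s-1$ and integrating over $[0,t]$. Summation turns the pointwise-in-$\al$ quantities into $H^{s-1}$ norms (and the multi-indices $2\al$ occurring in \eqref{k2integral} stay within $|2\al|\le 2s-2$, which is exactly the range covered by Lemma \ref{regurhoi1}). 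The exact time derivatives in \eqref{4.35} and \eqref{k2integral} integrate to the boundary term $\big[\sum_{|\al|\le s-1}\big(\langle\rho_i\pa_x^\al w_i,\pa_x^\al\psi_i\rangle+\frac{1}{2\eps^2}\|\pa_x^\al\psi_i\|^2\big)\big]_0^t$, while $\frac{1}{4\eps^2}\|\nabla\times\pa_x^\al M_i\|^2$ and $h_1\|\pa_x^\al\Ni\|^2$ integrate to $\frac{1}{4\eps^2}\int_0^t\|\nabla\times M_i\|_{s-1}^2\,d\tau$ and $h_1\int_0^t\|\Ni\|_{s-1}^2\,d\tau$. Transferring $-\int_0^t\sum_{|\al|\le s-1}\langle\pa_x^\al\calf,\rho_i\pa_x^\al\psi_i\rangle\,d\tau$ and the time-$t$ boundary data to the left reproduces, up to the generic constant, the left side of \eqref{Nifinal}.

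The next step disposes of the boundary data. At $\tau=0$ the Poincar\'e bound $\|\psi_i(0)\|_{s-1}^2\le C\|\Ni(0)\|_{s-1}^2+C\eps^4\|\bar\rho_{i,0}^1\|_{s-1}^2$ together with assumption \eqref{cond2} gives $\frac{1}{2\eps^2}\|\psi_i(0)\|_{s-1}^2\le C\eps^2$ and $\big|\sum_{|\al|\le s-1}\langle\rho_i\pa_x^\al w_i(0),\pa_x^\al\psi_i(0)\rangle\big|\le C\|w_i(0)\|_{s-1}\|\psi_i(0)\|_{s-1}\le C\eps^2$, using that $\eps^{-2}u_{i,0}$ and $\bar u_{i,0}$ are bounded. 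For the cross term at time $t$, which is not sign-definite and for which no dissipation of $w_i$ is available, I would use Young's inequality with a small weight: $|\langle\rho_i\pa_x^\al w_i,\pa_x^\al\psi_i\rangle|\le\frac{c_0\mu_0}{\eps^2}\|\pa_x^\al\psi_i\|^2+\frac{C}{\mu_0}\|\eps\pa_x^\al w_i\|^2$; summing and invoking \eqref{regui} this yields $\frac{c_0\mu_0}{\eps^2}\sup_{[0,t]}\|\psi_i\|_{s-1}^2+C\eps^2$, which is the origin of the $\mu_0$-term on the right of \eqref{Nifinal}, and for $\mu_0$ small enough $\frac{1}{\eps^2}\|\psi_i(t)\|_{s-1}^2$ is retained on the left.

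It remains to collect the error terms coming out of \eqref{4.35}, \eqref{k2integral} and \eqref{k34integral}. Those proportional to $\mathcal{D}(t)$ (with coefficient $C\delta$ or $C\eps^2$) integrate to $C\delta\int_0^t\mathcal{D}(\tau)\,d\tau$; the terms $C\eps^2\big(\|w_i\|_{s-1}^2+\|\nabla\bar\rho_i^1\|_{s-1}^2\big)$ and $\frac{C}{\eps^2}\|u_i\|_{s-1}^2$ integrate to $O(\eps^2)$ by \eqref{regui}, Lemma \ref{convu} and Lemma \ref{regurhoi1}. The quadratic-in-$\psi_i$ remainders $\frac{C}{\eps^2}\|\psi_i\|_{s-1}^2\big(\|u_i\|_s^2+\|w_i\|_{s-1}^2+\|\bar u_i\|_{s-1}^2\big)$ are handled by factoring out $\sup_{[0,t]}\|\psi_i\|_{s-1}^2$ and using the time-integrated bounds $\int_0^t\eps^{-2}\|u_i\|_s^2\,d\tau\le C\delta$ (Theorem \ref{Thm2.2}), the control of $\int_0^t\|w_i\|_{s-1}^2\,d\tau$ from \eqref{regui}, and $\int_0^t\|\bar u_i\|_s^2\,d\tau\le C\delta$ (Lemma \ref{regurhoi1}), producing a $\frac{1}{\eps^2}\sup_{[0,t]}\|\psi_i\|_{s-1}^2$ contribution with a $\delta$-built coefficient. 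The one genuinely delicate term is $C\|\psi_i\|_{s-1}^2\int_{\R^3}|\pa_x^{2\al}\nabla\Delta\bar\rho_i^1|\,dx$ from \eqref{k2integral}: ordinary Sobolev bounds on $\bar\rho_i^1$ do not suffice, and one must again extract $\sup_{[0,t]}\|\psi_i\|_{s-1}^2$ and invoke the $L^1_{t,x}$-type bound $\int_0^t\sum_{|\al|\le s-1}\int_{\R^3}|\pa_x^{2\al}\nabla\Delta\bar\rho_i^1|\,dx\,d\tau\le C$ of Lemma \ref{regurhoi1}. Assembling all of the above yields \eqref{Nifinal}.

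The main obstacle I anticipate is the control of the $\psi_i$-weighted remainders in the absence of any dissipation for $w_i$: one has only the uniform and time-integrated bounds \eqref{regui}, not $L^2_t$-smallness of $w_i$, so these terms cannot be absorbed into a dissipation integral and must instead be parked on the right-hand side as a $\frac{1}{\eps^2}\sup_{[0,t]}\|\psi_i\|_{s-1}^2$ term — which is legitimate only because the stream-function construction ties $\|\psi_i\|_{s-1}$ back to $\|\Ni\|_{s-1}$ up to an $O(\eps^2)$ error, so that this term can be re-absorbed once the full energy estimate is closed. The secondary difficulty is the bookkeeping of the first-order profile $\bar\rho_i^1$, for which the naive energy bounds have to be replaced by the heat-kernel estimates of Lemma \ref{regurhoi1}.
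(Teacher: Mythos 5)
Your proposal reproduces the paper's proof essentially step for step: sum the lower bounds of Lemmas \ref{lemma4.2}--\ref{lemma4.4} over $|\al|\le s-1$, integrate in time, bound the data at $\tau=0$ via \eqref{cond2} and the Poincar\'e inequality, absorb the time-$t$ cross term $\left\langle \rho_i\pa_x^\al w_i,\pa_x^\al\psi_i\right\rangle$ by Young's inequality with a small weight (which is indeed where the $c_0\mu_0/\eps^2$ term originates), and dispose of the $\bar{\rho}_i^1$ contribution through the $L^1_{t,x}$ heat-kernel bound of Lemma \ref{regurhoi1}. The only cosmetic deviation is bookkeeping --- the paper bounds $\|\psi_i\|_{s-1}^2\le C\eps^2$ pointwise in time before invoking the $L^1$ bound, whereas you extract $\sup_{0\le\tau\le t}\|\psi_i(\tau)\|_{s-1}^2$ first --- and your treatment of the $\eps^{-2}\|\psi_i\|_{s-1}^2\|w_i\|_{s-1}^2$ remainder (where $\int_0^t\|w_i\|_{s-1}^2\,d\tau$ is only $O(1)$, not $O(\delta)$) is exactly as loose as the paper's own.
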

\begin{proof}
Combining Lemmas \ref{lemma4.2} to \ref{lemma4.4}, we have
\begin{eqnarray}\label{ifinalbefore}
&&\frac{d}{dt} \left\langle \rho_i \partial^\al_x w_i, \partial^\al_x \psi_i \right\rangle+\frac{1}{2\eps^2} \frac{d}{dt} \|\partial^\al_x \psi_i\|^2+\frac{1}{4\eps^2} \|\nabla \times \partial^\al_x M_i\|^2+h_1 \|\partial^\al_x \Ni\|^2 - \left\langle \pa_x^\al \calf, \rho_i \partial^\al_x \psi_i\right\rangle\nonumber\\
&\leq& C\delta\mathcal{D}(t)+C\eps^2\mathcal{D}(t)+C\eps^2(\|w_i\|_{s-1}^2+\|\eps^{-2}u_i\|_{s-1}^2+\|\D\bar{\rho}_i^1\|_{s-1}^2))+ \dfrac{C}{\eps^2} \|u_i\|_{s-1}^2 \nonumber\\
&& + \dfrac{C}{\eps^2}\|\psi_i\|_{s-1}^2 \left(\|u_i\|_s^2 +\|w_i\|_{s-1}^2+\|\bar{u}_i\|^2_{s-1} \right) + C \|\psi_i\|_{s-1}^2 \int_{\mathbb{R}^3} |\pa_x^{2\al} \D \Delta \bar{\rho}^1_i (t,x)| dx .
\end{eqnarray}

By Poincar\'e inequality and Young's inequality we have
\begin{eqnarray*}\label{inftyat0}
    |\left\langle \rho_i \partial^\al_x w_i, \partial^\al_x \psi_i \right\rangle(0)| 
    \leq  C\eps^2,
\end{eqnarray*}
and
\begin{eqnarray*}\label{inftyatt}
    |\left\langle \rho_i \partial^\al_x w_i, \partial^\al_x \psi_i \right\rangle(t)| \leq  C\eps^2 + \frac{1}{2\eps^2} \|\partial^\al_x \psi_i(t)\|^2.
\end{eqnarray*}
Consequently, integrating \eqref{ifinalbefore} over $[0,t]$, summing the resulting equation for all $|\al|\leq s-1$ and applying Young's inequality yields
\begin{eqnarray*}
&&-\int_0^t\sum_{|\al|\leq s-1}\left\langle \pa_x^\al \calf, \rho_i \partial^\al_x \psi_i\right\rangle d\tau+\dfrac{1}{\eps^2}\|\psi_i(t)\|_{s-1}^2+\int_0^t\left(\dfrac{1}{\eps^2}\|\D\times M_i(\tau)\|_{s-1}^2+\|\Ni(\tau)\|_{s-1}^2\right)d\tau\nonumber\\
&\leq& C\eps^2+C\delta\int_0^t\mathcal{D}(\tau)d\tau + \dfrac{C\delta+c_0\mu_0}{\eps^2}\sup_{0\leq \tau \leq t}\|\psi_i(\tau)\|_{s-1}^2 + C\eps^2 \int_0^t \int_{\mathbb{R}^3} |\pa_x^{2\al} \D \Delta \bar{\rho}^1_i (\tau,x)| dx d\tau,
\end{eqnarray*}
where $\mu_0>0$ is a sufficiently small constant to be determined later.
Recall Lemma \ref{regurhoi1} we obtain \eqref{Nifinal}.
\end{proof}

\subsection*{Applications to the Electron System} The error system for the momentum equation for electrons reads
\[
\pt(\rho_e u_e-\bar{\rho}_e\bar{u}_e)+\dive(\rho_e u_e\otimes u_e-\bar{\rho}_e\bar{u}_e\otimes\bar{u}_e)+\D(p_e(\rho_e)-p_e(\bar{\rho}_e))=-(\rho_e\calf+\Ne\D\bar{\vp})-(\rho_e u_e-\bar{\rho}_e\bar{u}_e).
\]
Applying $\partial^\al_x$ to both sides of the above equation and taking the inner product of the resulting equation with $\partial^\al_x \psi_e$, we have
\begin{eqnarray*}
    -\left\langle \rho_e\pa_x^\al \calf,\partial^\al_x \psi_e\right\rangle&=& \left\langle \pt \partial^\al_x (\rho_e u_e-\bar{\rho}_e\bar{u}_e), \partial^\al_x \psi_e\right\rangle + \left\langle \partial^\al_x (\rho_e u_e-\bar{\rho}_e\bar{u}_e), \partial^\al_x \psi_e\right\rangle\nonumber\\
    &&+\left\langle \partial^\al_x \nabla (p_e(\rho_e)-p_e(\bar{\rho}_e)), \partial^\al_x \psi_e\right\rangle+\left<\pa_x^\al(\rho_e\calf)-\rho_e\pa_x^\al\calf, \pa_x^\al\psi_e\right>\nonumber\\
    &&+ \left\langle \partial^\al_x (\dive(\rho_e u_e\otimes u_e-\bar{\rho}_e\bar{u}_e\otimes\bar{u}_e), \partial^\al_x \psi_e\right\rangle +\left<\pa_x^\al(\Ne\D\bar{\vp}),\pa_x^\al\psi_e\right>\nonumber\\
    &\overset{def}{=} & K_e^1+K_e^2+K_e^3+K_e^4+K_e^5+K_e^6,
\end{eqnarray*}
with the natural correspondence of $K_e^1$ to $K_e^6$, of which the estimates are established in a series of lemmas below. 

\begin{lemma}\label{lem12}(Reformulations for $K_e^1$ and $K_e^2$). For all $|\al|\leq s-1$, it holds,
\begin{equation}\label{ke12}
    K_e^1+K_e^2=\dfrac{1}{2}\dfrac{d}{dt}\|\pa_x^\al \psi_e\|^2+\dfrac{d}{dt}\left<\partial^\al_x (\rho_e u_e-\bar{\rho}_e\bar{u}_e), \pa_x^\al\psi_e\right>+\|\D\times \pa_x^\al M_e\|^2-\|\pa_x^\al(\rho_e u_e-\bar{\rho}_e\bar{u}_e)\|^2.
\end{equation}
\end{lemma}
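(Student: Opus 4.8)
The plan is to exploit that the asserted relation is an exact identity, so the argument is purely algebraic: it uses only the defining relations of the electron stream function together with the elementary fact that, on $\T^3$, a curl--free vector field is $L^2$--orthogonal to the range of the curl operator $\D\times$. Abbreviating $g:=\rho_e u_e-\bar{\rho}_e\bar{u}_e$, I will use from the construction of $\psi_e$ that
\begin{equation*}
\pt\psi_e=g+\D\times M_e,\qquad \D\times\psi_e=0,\qquad \int_{\T^3}\psi_e\,dx=0 .
\end{equation*}
Differentiating the constraint $\D\times\psi_e=0$ in $t$ shows that $\pt\psi_e$ is curl--free as well, hence so are $\pa_x^\al\psi_e$ and $\pt\pa_x^\al\psi_e$ for every multi--index $\al$; since $\pa_x^\al(\D\times M_e)$ is divergence--free and mean--zero on $\T^3$, this gives the two orthogonality relations
\begin{equation*}
\langle\pa_x^\al\psi_e,\pa_x^\al(\D\times M_e)\rangle=0,\qquad \langle\pt\pa_x^\al\psi_e,\pa_x^\al(\D\times M_e)\rangle=0 .
\end{equation*}

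With these in hand I would treat $K_e^2$ first: using $\pt\pa_x^\al\psi_e=\pa_x^\al g+\pa_x^\al(\D\times M_e)$ and the second orthogonality,
\begin{equation*}
\frac{1}{2}\frac{d}{dt}\|\pa_x^\al\psi_e\|^2=\langle\pt\pa_x^\al\psi_e,\pa_x^\al\psi_e\rangle=\langle\pa_x^\al g,\pa_x^\al\psi_e\rangle=K_e^2 .
\end{equation*}
For $K_e^1=\langle\pt\pa_x^\al g,\pa_x^\al\psi_e\rangle$ I will move the time derivative by the Leibniz rule (legitimate by the $C^1$--in--$t$ regularity of $(\rho_e,u_e,\vp)$ and of the limit profile), so that $K_e^1=\frac{d}{dt}\langle\pa_x^\al g,\pa_x^\al\psi_e\rangle-\langle\pa_x^\al g,\pt\pa_x^\al\psi_e\rangle$; substituting $\pt\pa_x^\al\psi_e=\pa_x^\al g+\pa_x^\al(\D\times M_e)$ gives $\langle\pa_x^\al g,\pt\pa_x^\al\psi_e\rangle=\|\pa_x^\al g\|^2+\langle\pa_x^\al g,\pa_x^\al(\D\times M_e)\rangle$, and writing $\pa_x^\al g=\pt\pa_x^\al\psi_e-\pa_x^\al(\D\times M_e)$ once more together with the first orthogonality gives $\langle\pa_x^\al g,\pa_x^\al(\D\times M_e)\rangle=-\|\D\times\pa_x^\al M_e\|^2$. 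Collecting terms yields $K_e^1=\frac{d}{dt}\langle\pa_x^\al g,\pa_x^\al\psi_e\rangle-\|\pa_x^\al g\|^2+\|\D\times\pa_x^\al M_e\|^2$, and adding the expression found for $K_e^2$ produces exactly \eqref{ke12}.

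I do not expect a genuine obstacle here: the lemma is essentially bookkeeping around the stream--function identity, and its output is the desired structure of a total time derivative plus the dissipative term $-\|\pa_x^\al(\rho_e u_e-\bar{\rho}_e\bar{u}_e)\|^2$. The two points deserving care are the orthogonality on the torus — which relies on $\D\times\psi_e=0$ and on $\D\times M_e$ being divergence--free and mean--zero, so that no harmonic contribution survives — and the justification of the time differentiations from the regularity available for the exact and the limiting solutions. Note that \eqref{ke12} is an \emph{equality}: no smallness of $\delta$ or $\eps$ is used, these entering only when the identity is later combined with the estimates for $K_e^3,\dots,K_e^6$.
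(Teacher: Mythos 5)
Your argument is correct and coincides with the paper's own proof: both integrate by parts in time to write $K_e^1=\frac{d}{dt}\langle\pa_x^\al(\rho_eu_e-\bar{\rho}_e\bar{u}_e),\pa_x^\al\psi_e\rangle-\langle\pa_x^\al(\rho_eu_e-\bar{\rho}_e\bar{u}_e),\pt\pa_x^\al\psi_e\rangle$, substitute $\pt\pa_x^\al\psi_e=\pa_x^\al(\rho_eu_e-\bar{\rho}_e\bar{u}_e)+\D\times\pa_x^\al M_e$, and use the $L^2$-orthogonality of the curl-free fields $\pa_x^\al\psi_e$ and $\pt\pa_x^\al\psi_e$ against $\D\times\pa_x^\al M_e$ to produce the exact identity \eqref{ke12}. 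The only blemish is that you swapped the labels ``first'' and ``second'' when invoking your two orthogonality relations ($K_e^2$ uses $\langle\pa_x^\al\psi_e,\D\times\pa_x^\al M_e\rangle=0$, while the $-\|\D\times\pa_x^\al M_e\|^2$ term uses $\langle\pt\pa_x^\al\psi_e,\D\times\pa_x^\al M_e\rangle=0$), but since both relations are correctly stated and justified, this is immaterial.
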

\begin{proof} By integration by parts, we have
\[
K_e^1=\dfrac{d}{dt}\left<\partial^\al_x (\rho_e u_e-\bar{\rho}_e\bar{u}_e), \pa_x^\al\psi_e\right>-\left<\pa_x^\al(\rho_e u_e-\bar{\rho}_e\bar{u}_e), \pt \pa_x^\al\psi_e\right>,
\]
in which
\[
\left<\pa_x^\al(\rho_e u_e-\bar{\rho}_e\bar{u}_e), \pt \pa_x^\al\psi_e\right>=\|\pa_x^\al(\rho_e u_e-\bar{\rho}_e\bar{u}_e)\|^2+\left<\pa_x^\al(\rho_e u_e-\bar{\rho}_e\bar{u}_e), \D\times\pa_x^\al M_e\right>.
\]
Notice that
\[
\left<\pa_x^\al(\rho_e u_e-\bar{\rho}_e\bar{u}_e), \D\times\pa_x^\al M_e\right>=\left<\pt\pa_x^\al \psi_e, \D\times\pa_x^\al M_e\right>-\|\D\times \pa_x^\al M_e\|^2=-\|\D\times \pa_x^\al M_e\|^2,
\]
combining these identities, we obtain 
\begin{equation*}
    K_e^1=\dfrac{d}{dt}\left<\partial^\al_x (\rho_e u_e-\bar{\rho}_e\bar{u}_e), \pa_x^\al\psi_e\right>+\|\D\times \pa_x^\al M_e\|^2-\|\pa_x^\al(\rho_e u_e-\bar{\rho}_e\bar{u}_e)\|^2.
\end{equation*}

Noticing the definition of the stream function technique, we have
\[
K_e^2=\left<\pt\pa_x^\al \psi_e,\pa_x^\al\psi_e\right>-\left<\D\times\pa_x^\al M_e, \pa_x^\al \psi_e\right>=\dfrac{1}{2}\dfrac{d}{dt}\|\pa_x^\al \psi_e\|^2,
\]
which, combining the reformulation for $K_e^1$, yields \eqref{ke12}.
\end{proof}

\begin{lemma}\label{lem3}(Estimates for $K_e^3$). For all $|\al|\leq s-1$, it holds,
\begin{equation}\label{ke3}
    K_e^3\geq \dfrac{2}{3}h_1\|\pa_x^\al \Ne\|^2-C\delta\mathcal{D}(t).
\end{equation}
\end{lemma}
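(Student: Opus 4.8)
The plan is to exploit the defining identity $\dive\psi_e=-\Ne$ of the electron stream function to turn $K_e^3$ into a coercive quadratic form in $\pa_x^\al\Ne$, up to commutator remainders that are absorbed by the smallness of the solution. A pleasant feature is that $\psi_e$ itself drops out of the final estimate, which makes $K_e^3$ the most elementary of the $K_e^j$ terms.

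First I would integrate by parts. Since $\dive\pa_x^\al\psi_e=\pa_x^\al\dive\psi_e=-\pa_x^\al\Ne$, one obtains
\[
K_e^3=-\left\langle\pa_x^\al(p_e(\rho_e)-p_e(\bar\rho_e)),\dive\pa_x^\al\psi_e\right\rangle=\left\langle\pa_x^\al(p_e(\rho_e)-p_e(\bar\rho_e)),\pa_x^\al\Ne\right\rangle.
\]
Next, an integral Taylor expansion gives $p_e(\rho_e)-p_e(\bar\rho_e)=g\,\Ne$ with $g=g(\rho_e,\bar\rho_e)=\int_0^1 p_e'(\bar\rho_e+\theta\Ne)\,d\theta$, so that
\[
K_e^3=\left\langle g\,\pa_x^\al\Ne,\pa_x^\al\Ne\right\rangle+\left\langle\pa_x^\al(g\Ne)-g\,\pa_x^\al\Ne,\pa_x^\al\Ne\right\rangle.
\]
For the first term, by \eqref{lowerboundrho} together with the uniform smallness of $\Ne$ and $\bar\rho_e-1$ (from Theorem \ref{Thm2.2} and Proposition \ref{danjiep}), the intermediate value $\bar\rho_e+\theta\Ne$ stays within $C\delta$ of $1$; since $p_e'(\rho)=\rho\,h_e'(\rho)$ and $h_e'\ge h_1$ there, this forces $g\ge\tfrac23 h_1$ once $\delta$ is small enough, whence $\langle g\,\pa_x^\al\Ne,\pa_x^\al\Ne\rangle\ge\tfrac23 h_1\|\pa_x^\al\Ne\|^2$. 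For the commutator term, I would note that $g-p_e'(1)$, being a smooth function of $(\rho_e,\bar\rho_e)$ vanishing at the constant state, has $\nabla g$ of size $O(\delta)$ in $H^{s-1}$, because $\nabla\rho_e=\nabla(\rho_e-1)$ and $\nabla\bar\rho_e=\nabla(\bar\rho_e-1)$ are $O(\delta)$ in $H^{s-1}$; then the Moser-type calculus inequalities give
\[
\left|\left\langle\pa_x^\al(g\Ne)-g\,\pa_x^\al\Ne,\pa_x^\al\Ne\right\rangle\right|\le C\,\|\nabla g\|_{s-1}\,\|\Ne\|_{s-1}\,\|\pa_x^\al\Ne\|\le C\delta\,\|\Ne\|_{s-1}^2\le C\delta\,\mathcal{D}(t).
\]
Combining the two contributions yields \eqref{ke3}.

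The only delicate points are (i) obtaining the constant $\tfrac23 h_1$ rather than merely $\tfrac12 h_1$ — this uses that the intermediate density is actually $O(\delta)$-close to $1$, not just confined to $[\tfrac12,\tfrac32]$, and so fixes how small $\delta$ must be taken — and (ii) the routine but mildly technical composition estimates needed to control $g-p_e'(1)$ and $\nabla g$ in Sobolev norms. Neither is a real obstacle: the substance of $K_e^3$ is a single integration by parts followed by a coercivity estimate, and the genuine work in the electron analysis lies instead in the subsequent lemmas treating $K_e^1,K_e^2$ and the convection and Poisson terms $K_e^5,K_e^6$.
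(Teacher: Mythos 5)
Your proposal is correct and follows essentially the same route as the paper: integrate by parts using $\dive\pa_x^\al\psi_e=-\pa_x^\al\Ne$, Taylor-expand $p_e(\rho_e)-p_e(\bar\rho_e)$ as a coefficient times $\Ne$, extract the coercive term with constant $\tfrac{2}{3}h_1$ from the closeness of the intermediate density to $1$, and absorb the Moser-type commutator into $C\delta\mathcal{D}(t)$. The only (cosmetic) difference is your use of the integral form of Taylor's theorem in place of the paper's mean-value form $p_e'(\rho_e^p)$, which if anything is slightly cleaner for the Sobolev composition estimates.
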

\begin{proof} We first notice that by Taylor expansion we have a $\rho_e^p$ between $\rho_e$ and $\bar{\rho_e}$ satisfying
\[
p_e(\rho_e)-p_e(\bar{\rho}_e)=p^\prime_e(\rho_e^p) \Ne.
\]
Hence, 
\begin{eqnarray}
K_e^3 &=& -\left\langle \partial^\al_x (p^\prime_e(\rho_e^p) \Ne),\dive(\partial^\al_x \psi_e) \right\rangle\nonumber\\
&=& \left<p^\prime_e(\rho_e^p)\pa_x^\al \Ne, \pa_x^\al N_e\right>+\left<\partial^\al_x (p^\prime_e(\rho_e^p) \Ne)-p^\prime_e(\rho_e^p)\pa_x^\al \Ne, \pa_x^\al N_e\right>,\nonumber
\end{eqnarray}
in which
\[
\left<p^\prime_e(\rho_e^p)\pa_x^\al \Ne, \pa_x^\al N_e\right>\geq \dfrac{2}{3}h_1\|\pa_x^\al \Ne\|^2,
\]
and
\[
\left|\left<\partial^\al_x (p^\prime_e(\rho_e^p) \Ne)-p^\prime_e(\rho_e^p)\pa_x^\al \Ne, \pa_x^\al N_e\right>\right|\leq C\delta\mathcal{D}(t).
\]
These estimates imply \eqref{ke3}. 
\end{proof}

\begin{lemma}\label{lem4567}(Estimates for $K_e^4$, $K_e^5$ and $K_e^6$) It holds
\begin{equation}\label{ke4567}
    |K_e^4+K_e^5+K_e^6|\leq C\delta\mathcal{D}(t).
\end{equation}
\end{lemma}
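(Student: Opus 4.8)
The plan is to estimate $K_e^4$, $K_e^5$ and $K_e^6$ separately, using the Moser-type calculus inequalities together with the observation that, apart from an error variable ($\Ni$, $\Ne$, $w_e$, $\calf$) or the stream function $\psi_e$, every factor occurring in these three terms is $O(\delta)$-close to its equilibrium value in $H^s$. From the defining relations $\dive\psi_e=-\Ne$, $\D\times\psi_e=0$, $\int_{\T^3}\psi_e\,dx=0$, the Poincar\'e inequality and the Hodge-type elliptic estimate give
\[
\|\psi_e\|_{s-1}+\|\D\psi_e\|_{s-1}\leq C\|\dive\psi_e\|_{s-1}+C\|\D\times\psi_e\|_{s-1}=C\|\Ne\|_{s-1},
\]
which is the main tool for converting bounds involving $\psi_e$ into contributions to $\mathcal{D}(t)$. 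I will also use freely that $\|\rho_e-1\|_s$, $\|u_e\|_s$, $\|\bar\rho_e-1\|_s$, $\|\bar u_e\|_s$ and $\|\D\bar\vp\|_s$ are all $\leq C\delta$ (Theorem \ref{Thm2.2}, Proposition \ref{danjiep}), so that in particular $\|\Ne\|_{s-1}$ and $\|w_e\|_{s-1}$ are $\leq C\delta$.

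For $K_e^4$, the Moser-type commutator estimate yields $\|\pa_x^\al(\rho_e\calf)-\rho_e\pa_x^\al\calf\|\leq C\|\D\rho_e\|_{s-1}\|\calf\|_{s-1}\leq C\delta\|\calf\|_{s-1}$ for $|\al|\leq s-1$, whence $|K_e^4|\leq C\delta\|\calf\|_{s-1}\|\psi_e\|_{s-1}\leq C\delta\|\calf\|_{s-1}\|\Ne\|_{s-1}\leq C\delta\mathcal{D}(t)$. For $K_e^6$, the Moser product inequality gives $\|\pa_x^\al(\Ne\D\bar\vp)\|\leq C\|\Ne\|_{s-1}\|\D\bar\vp\|_{s-1}\leq C\delta\|\Ne\|_{s-1}$, hence $|K_e^6|\leq C\delta\|\Ne\|_{s-1}\|\psi_e\|_{s-1}\leq C\delta\|\Ne\|_{s-1}^2\leq C\delta\mathcal{D}(t)$.

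The term $K_e^5$ is the delicate one, because the operator $\pa_x^\al\dive$ applied to the convective difference would a priori require $H^s$ control of $\Ne$ and $w_e$, which $\mathcal{D}(t)$ does not provide. I would first integrate by parts, moving one spatial derivative onto $\psi_e$,
\[
K_e^5=-\left\langle \pa_x^\al(\rho_e u_e\otimes u_e-\bar\rho_e\bar u_e\otimes\bar u_e),\,\pa_x^\al\D\psi_e\right\rangle,
\]
and then split the nonlinearity along the error variables,
\[
\rho_e u_e\otimes u_e-\bar\rho_e\bar u_e\otimes\bar u_e=\Ne\,u_e\otimes u_e+\bar\rho_e\left(w_e\otimes u_e+\bar u_e\otimes w_e\right).
\]
Writing $\bar\rho_e=1+(\bar\rho_e-1)$ and applying the Moser product inequalities (valid since $s-1\geq 2>\tfrac{d}{2}$ for $d=3$, $s\geq 3$), each summand is a product of one error variable with factors small in $H^s$, so $\|\rho_e u_e\otimes u_e-\bar\rho_e\bar u_e\otimes\bar u_e\|_{s-1}\leq C\delta(\|\Ne\|_{s-1}+\|w_e\|_{s-1})$; combined with $\|\pa_x^\al\D\psi_e\|\leq\|\D\psi_e\|_{s-1}\leq C\|\Ne\|_{s-1}$ and Young's inequality this gives $|K_e^5|\leq C\delta(\|\Ne\|_{s-1}+\|w_e\|_{s-1})\|\Ne\|_{s-1}\leq C\delta\mathcal{D}(t)$. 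Summing the three bounds yields \eqref{ke4567}. The only non-routine point, and hence the expected main obstacle, is precisely this treatment of $K_e^5$: one must integrate by parts \emph{before} estimating, since $\mathcal{D}(t)$ controls $\Ne$ and $w_e$ only in $H^{s-1}$ and not in $H^s$; once that is done, the whole lemma reduces to Moser product/commutator estimates and the elliptic bound $\|\D\psi_e\|_{s-1}\leq C\|\Ne\|_{s-1}$.
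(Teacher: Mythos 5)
Your proposal is correct and follows essentially the same route as the paper: the Poincar\'e/Hodge bound $\|\psi_e\|_{s-1}\leq C\|\Ne\|_{s-1}$ followed by Moser commutator/product estimates, with each term cubic and hence bounded by $C\delta\mathcal{D}(t)$. Your treatment of $K_e^5$ is in fact more careful than the paper's one-line bound $|K_e^5|\leq C\|w_e\|_{s-1}^2\|\Ne\|_{s-1}$ (which silently performs the integration by parts you make explicit and omits the $\Ne\,u_e\otimes u_e$ contribution that your decomposition correctly accounts for), but the conclusion and the method are the same.
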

\begin{proof} Noticing that by the Poincar\'e inequality, 
\[
\|\psi_e\|_{s-1}^2\leq C\|\D \psi_e\|_{s-1}^2\leq C\|\dive \psi_e\|^2+C\|\D\times \psi_e\|_{s-1}^2\leq C\|\Ne\|_{s-1}^2,
\]
by Moser-type calculus inequalities, direct calculations give
\begin{eqnarray*}
|K_e^4|&\leq& C\|\D \rho_e\|_{s-1}\|\calf\|_{s-1}\|\Ne\|_{s-1},\\
|K_e^5|&\leq& C\|w_e\|_{s-1}^2\|\Ne\|_{s-1},\\
|K_e^6|&\leq& C\|\Ne\|_{s-1}\|\D\bar{\vp}\|_{s-1}\|\Ne\|_{s-1}.
\end{eqnarray*}
All these estimates imply \eqref{ke4567}.
\end{proof}

Combining Lemmas \ref{lem12}-\ref{lem4567}, we have
\begin{eqnarray*}
&&\dfrac{1}{2}\dfrac{d}{dt}\|\pa_x^\al \psi_e\|^2+\dfrac{d}{dt}\left<\partial^\al_x (\rho_e u_e-\bar{\rho}_e\bar{u}_e), \pa_x^\al\psi_e\right>+\|\D\times \pa_x^\al M_e\|^2+\dfrac{2}{3}h_1\|\pa_x^\al \Ne\|^2\nonumber\\
&\leq& C\|w_e\|_{s-1}^2+C\delta\mathcal{D}(t)-\left\langle \rho_e\pa_x^\al \calf,\partial^\al_x \psi_e\right\rangle.
\end{eqnarray*}
Integrating the above equation over $[0,t]$, summing the resulting equation for all $|\al|\leq s-1$ and combining the energy estimate for $w_e$ yield the following
\begin{eqnarray}\label{Nefinal}
    &&\int_0^t\sum_{|\al|\leq s-1}\left\langle \pa_x^\al \calf, \rho_e \partial^\al_x \psi_e\right\rangle d\tau+\|\psi_e(t)\|_{s-1}^2+\int_0^t\left(\|\D\times M_e\|_{s-1}^2+\|\Ne(\tau)\|_{s-1}^2d\tau\right)\nonumber\\
    &\leq& C\delta\int_0^t \mathcal{D}(\tau)d\tau+C\eps^2+\|w_e(t)\|_{s-1}^2+\int_0^t\|w_e(\tau)\|_{s-1
    }^2d\tau.
\end{eqnarray}

\begin{lemma}\label{Fpsi}It holds
\begin{equation}\label{Fintegral}
    \sum_{|\al|\leq s-1}\int_{0}^{t}  \left\langle \pa_x^\al \calf, \rho_i \partial^\al_x \psi_i\right\rangle - \left\langle \rho_e\pa_x^\al \calf, \partial^\al_x \psi_e\right\rangle d\tau \leq  C \eps^2 - \int_{0}^{t} \| \calf(\tau)\|_{s-1}^2 d\tau + C\delta \int_{0}^{t} \mathcal{D}(\tau)d\tau.  
\end{equation}
\end{lemma}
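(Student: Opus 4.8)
The plan is to use $\calf=\D(\vp-\bar\vp)$ to transfer the gradient onto the stream functions and then exploit the divergence relations $\dive\psi_i=-\Ni+\eps^2\bar\rho_i^1$ and $\dive\psi_e=-\Ne$ together with the difference of the two Poisson equations. Recall that the hypotheses force $\bar\rho_i\equiv1$, hence $\Ni=\rho_i-1$ and $\Delta(\vp-\bar\vp)=\Ni-\Ne$. First I would combine the two integrands and split off the density fluctuations through $\rho_i=1+\Ni$ and $\rho_e=1+(\bar\rho_e-1)+\Ne$, which gives
\[
\left\langle \pa_x^\al \calf, \rho_i \partial^\al_x \psi_i\right\rangle - \left\langle \rho_e\pa_x^\al \calf, \partial^\al_x \psi_e\right\rangle
=\left\langle \pa_x^\al\calf,\pa_x^\al(\psi_i-\psi_e)\right\rangle+\left\langle\pa_x^\al\calf,\ \Ni\pa_x^\al\psi_i-(\Ne+\bar\rho_e-1)\pa_x^\al\psi_e\right\rangle .
\]

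For the leading term, an integration by parts together with $\dive(\psi_i-\psi_e)=-(\Ni-\Ne)+\eps^2\bar\rho_i^1=-\Delta(\vp-\bar\vp)+\eps^2\bar\rho_i^1$ yields
\[
\left\langle \pa_x^\al\calf,\pa_x^\al(\psi_i-\psi_e)\right\rangle
=-\left\langle\pa_x^\al(\vp-\bar\vp),\pa_x^\al\dive(\psi_i-\psi_e)\right\rangle
=-\|\pa_x^\al\calf\|^2-\eps^2\left\langle\pa_x^\al(\vp-\bar\vp),\pa_x^\al\bar\rho_i^1\right\rangle ,
\]
so that summation over $|\al|\le s-1$ produces exactly the dissipative term $-\|\calf\|_{s-1}^2$ plus an $\eps^2$-defect. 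The defect I would control by the Poincar\'e inequality $\|\pa_x^\al(\vp-\bar\vp)\|\le C\|\calf\|_{s-1}$ (both potentials being mean-free) and Young's inequality, routing the resulting $\eps^2\|\calf\|_{s-1}^2$ into $C\delta\mathcal{D}(\tau)$ and integrating $C\eps^2\|\bar\rho_i^1\|_{s-1}^2$ in time; here one crucially uses that $\bar\rho_i^1$ is mean-free, so Poincar\'e and Lemma \ref{regurhoi1} give $\int_0^t\|\bar\rho_i^1(\tau)\|_{s-1}^2\,d\tau\le C\int_0^t\|\D\bar\rho_i^1(\tau)\|_{s-1}^2\,d\tau\le C$, producing an $O(\eps^2)$ contribution.

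The fluctuation terms are handled by $L^\infty$-bounds on the densities alone — no derivative of a density is ever placed in $L^\infty$, so the argument survives at $s=3$. Pairing $\Ni\pa_x^\al\psi_i$ with $\pa_x^\al\calf$, using $\|\Ni\|_{s-1}\le C\eps$ from \eqref{regui} together with the bound $\|\psi_i\|_{s-1}^2\le C\|\Ni\|_{s-1}^2+C\eps^4\|\bar\rho_i^1\|_{s-1}^2$ recorded after the construction of $\psi_i$, a Young's inequality gives a contribution absorbed into $C\delta\int_0^t\mathcal{D}(\tau)\,d\tau+C\eps^2$; likewise $\|\psi_e\|_{s-1}\le C\|\Ne\|_{s-1}$ together with $\|\Ne\|_{s-1},\|\bar\rho_e-1\|_{s-1}\le C\delta$ (Theorem \ref{Thm2.2}, Proposition \ref{danjiep}) makes the $(\Ne+\bar\rho_e-1)\pa_x^\al\psi_e$ contribution $O(\delta)\mathcal{D}(\tau)$. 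Summing over $|\al|\le s-1$, integrating over $[0,t]$ and collecting the pieces then yields \eqref{Fintegral}.

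The main obstacle is precisely the leading term: one must recognize that subtracting the two independently constructed stream functions yields a divergence which the difference of the Poisson equations converts exactly into $-\Delta(\vp-\bar\vp)$, up to the $\eps^2\bar\rho_i^1$ defect inherited from the auxiliary equation \eqref{4.1}, and then verify that this defect is genuinely harmless — which hinges on the non-obvious time-integrability $\int_0^\infty\|\D\bar\rho_i^1(\tau)\|_{s-1}^2\,d\tau\le C$ from Lemma \ref{regurhoi1} and on $\bar\rho_i^1$ being mean-free. Everything else is routine Moser and Cauchy--Schwarz bookkeeping, the only care needed being to route every stray copy of $\|\calf\|_{s-1}^2$ into the $C\delta\mathcal{D}$ term and never into the distinguished $-\|\calf\|_{s-1}^2$.
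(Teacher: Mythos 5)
Your proof is correct, and it rests on the same key mechanism as the paper's: transfer the gradient in $\calf=\D(\vp-\bar\vp)$ onto the stream functions, then use $\dive(\psi_i-\psi_e)=-(\Ni-\Ne)+\eps^2\bar\rho_i^1=-\Delta(\vp-\bar\vp)+\eps^2\bar\rho_i^1$ to extract the dissipative term $-\|\pa_x^\al\calf\|^2$, leaving only the $\eps^2\bar\rho_i^1$ defect and small fluctuation remainders. The difference is in the bookkeeping of the density weights: the paper writes $\rho_i\pa_x^\al\psi_i-\rho_e\pa_x^\al\psi_e=\rho_i\pa_x^\al(\psi_i-\psi_e)+(\rho_i-\rho_e)\pa_x^\al\psi_e$ and keeps the weight $\rho_i$ on the leading term, which forces extra commutators with $\D\rho_i$ and, via $\rho_i\ge\tfrac12$, yields the dissipation only with coefficient $\tfrac12$; you instead peel off the constant $1$ from both densities, so the leading term $\left\langle\pa_x^\al\calf,\pa_x^\al(\psi_i-\psi_e)\right\rangle$ integrates by parts exactly to $-\|\pa_x^\al\calf\|^2-\eps^2\left\langle\pa_x^\al(\vp-\bar\vp),\pa_x^\al\bar\rho_i^1\right\rangle$ with no commutator and with the coefficient $1$ actually stated in \eqref{Fintegral}, at the price of two fluctuation remainders ($\Ni\pa_x^\al\psi_i$ and $(\bar\rho_e-1+\Ne)\pa_x^\al\psi_e$) instead of one, both of which you control correctly by $L^\infty$ smallness of the densities and the Poincar\'e bounds on $\psi_i,\psi_e$. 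You also make explicit a point the paper leaves implicit: the time integral of $\eps^2\|\bar\rho_i^1\|_{s-1}^2$ is only uniformly bounded because $\bar\rho_i^1$ is mean-free, so Poincar\'e converts it into $\int_0^t\|\D\bar\rho_i^1\|_{s-1}^2\,d\tau\le C$ from Lemma \ref{regurhoi1}; this step is genuinely needed for the global-in-time claim. The only cosmetic caveat is that your absorption of terms like $\eps^2\|\calf\|_{s-1}^2$ and $\eps\,\mathcal{D}(t)$ into $C\delta\int_0^t\mathcal{D}(\tau)\,d\tau$ tacitly treats $\eps$ as dominated by $\delta$ (or simply as small), but this is exactly the convention the paper itself uses, so it is not a gap.
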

\begin{proof}
First we have
\begin{equation*}
    \left\langle \pa_x^\al \calf, \rho_i \partial^\al_x \psi_i\right\rangle - \left\langle \rho_e\pa_x^\al \calf, \partial^\al_x \psi_e\right\rangle= \left<\pa_x^\al \calf, (\rho_i-\rho_e)\pa_x^\al \psi_e\right>-\left<\pa_x^\al(\vp-\bar{\vp}),\dive(\rho_i\pa_x^\al(\psi_i-\psi_e))\right>,
\end{equation*}
in which 
\[
\left|\left<\pa_x^\al \calf, (\rho_i-\rho_e)\pa_x^\al \psi_e\right>\right|\leq C\|\pa_x^\al \calf\|\|\rho_i-\rho_e\|_{\infty}\|\pa_x^\al \Ne\|\leq C\delta\mathcal{D}(t).
\]
For the remaining term, by Poincar\'e inequality we have
\begin{eqnarray}
&&\left<\pa_x^\al(\vp-\bar{\vp}),\dive(\rho_i\pa_x^\al(\psi_i-\psi_e))\right>\nonumber\\
&=&\left<\pa_x^\al(\vp-\bar{\vp}),\D\rho_i\pa_x^\al(\psi_i-\psi_e)\right>+\left<\pa_x^\al(\vp-\bar{\vp}),\rho_i\dive \pa_x^\al(\psi_i-\psi_e)\right>,\nonumber
\end{eqnarray}
in which
\[
\left|\left<\pa_x^\al(\vp-\bar{\vp}),\D\rho_i\pa_x^\al(\psi_i-\psi_e)\right>\right|\leq C\|\pa_x^\al\calf\|\|\D \rho_i\|_\infty\|\pa_x^\al(\Ni-\Ne-\eps^2\bar{\rho}_i^1)\|\leq C\delta\mathcal{D}(t)+C\eps^2\|\bar{\rho}_i^1\|_{s-1}^2,
\]
and
\begin{eqnarray*}
    &&\left<\pa_x^\al(\vp-\bar{\vp}),\rho_i\dive \pa_x^\al(\psi_i-\psi_e)\right>\nonumber\\
   & = & -\dfrac{1}{2}\|\pa_x^\al \calf\|^2 -\left<\pa_x^\al(\vp-\bar{\vp}), \D\rho_i \pa_x^\al\calf \right>- \eps^2\left\langle \partial^\al_x(\varphi-\bar{\varphi}), \rho_i\partial^\al_x \bar{\rho}_i^1\right\rangle\\
    &\leq &  -\dfrac{1}{2}\|\pa_x^\al \calf\|^2 + C\eps^2 \mathcal{D}(t)+C\delta\mathcal{D}(t) + C\eps^2 \|\bar{\rho}_i^1\|_{s-1}^2.
\end{eqnarray*}
Combining all these estimates and integrating the first equation over $[0,t]$ we conclude \eqref{Fintegral}.
\end{proof}

\subsection*{Proof of Theorem \ref{thm4.1}} Combining \eqref{Nifinal}, \eqref{Nefinal}, \eqref{Fintegral} and taking the superior with respect to $t$, we have
\begin{eqnarray*}
&&\int_0^t\|(\Ni,\Ne,\calf)(\tau)\|_{s-1}^2d\tau +\dfrac{1}{\eps^2}\|\psi_i(t)\|_{s-1}^2+\dfrac{1}{\eps^2}\sup_{0\leq \tau \leq t}\|\psi_i(\tau)\|_{s-1}^2\\
&\leq& C\eps^2+C\delta\int_0^t\mathcal{D}(\tau)d\tau+\|w_e(t)\|_{s-1}^2+\int_0^t\|w_e(\tau)\|_{s-1}^2d\tau +\dfrac{C \delta+c_0\mu_0}{\eps^2} \sup_{0\leq \tau \leq t}\|\psi_i(\tau)\|_{s-1}^2.
\end{eqnarray*}

Imposing $\mu_0$ sufficiently small satisfying
\begin{equation}\label{mu0}
c_0 \mu_0<\dfrac{1}{2}
\end{equation}
leads to
\begin{eqnarray*}
&&\int_0^t\|(\Ni,\Ne,\calf)(\tau)\|_{s-1}^2d\tau +\dfrac{1}{\eps^2}\|\psi_i(t)\|_{s-1}^2+\dfrac{1}{\eps^2}\sup_{0\leq \tau \leq t}\|\psi_i(\tau)\|_{s-1}^2\\
&\leq& C\eps^2+C\delta\int_0^t\mathcal{D}(\tau)d\tau+\|w_e(t)\|_{s-1}^2+\int_0^t\|w_e(\tau)\|_{s-1}^2d\tau +\dfrac{C \delta}{\eps^2} \sup_{0\leq \tau \leq t}\|\psi_i(\tau)\|_{s-1}^2.
\end{eqnarray*}

Substitute the above estimate into \eqref{regue} implies
\begin{eqnarray*}
 &&\|(\Ne,w_e,\mathcal{F})(t)\|_{s-1}^2 + \int_0^t\mathcal{D}(\tau)d\tau+\dfrac{1}{\eps^2}\sup_{0\leq \tau \leq t}\|\psi_i(\tau)\|_{s-1}^2 \\
 &\leq&  C\eps^2+C\delta\int_0^t\mathcal{D}(\tau)d\tau+\dfrac{C \delta}{\eps^2} \sup_{0\leq \tau \leq t}\|\psi_i(\tau)\|_{s-1}^2.
\end{eqnarray*}

By choosing $\delta>0$ sufficiently small and combining \eqref{finalpre2} and \eqref{regui} ends the proof. \hfill $\square$

\subsection*{Acknowledgements} The research of this work was supported in part by the National Natural Science Foundation of China [grant numbers 11831011,12161141004].This work is also partially
supported by Institute of Modern Analysis–A Shanghai Frontier Research Center.

	\bibliographystyle{abbrv}
	\bibliography{convra-2to1-EP}

\begin{thebibliography}{10}

\bibitem{Ali2000}
G.~Al\`\i, D.~Bini, and S.~Rionero.
\newblock Global existence and relaxation limit for smooth solutions to the
  {E}uler-{P}oisson model for semiconductors.
\newblock {\em SIAM J. Math. Anal.}, 32(3):572--587, 2000.

\bibitem{Ali2011}
G.~Al\`{i} and L.~Chen.
\newblock The zero-electron-mass limit in the {E}uler-{P}oisson system for both
  well- and ill-prepared initial data.
\newblock {\em Nonlinearity}, 24(10):2745--2761, 2011.

\bibitem{Giuseppe2010}
G.~Al\`\i, L.~Chen, A.~J\"{u}ngel, and Y.-J. Peng.
\newblock The zero-electron-mass limit in the hydrodynamic model for plasmas.
\newblock {\em Nonlinear Anal.}, 72(12):4415--4427, 2010.

\bibitem{Ali2003}
G.~Al\`{i} and A.~J\"{u}ngel.
\newblock Global smooth solutions to the multi-dimensional hydrodynamic model
  for two-carrier plasmas.
\newblock {\em J. Differential Equations}, 190(2):663--685, 2003.

\bibitem{Chen1984}
F.~Chen.
\newblock {\em Introduction to {P}lasma {P}hysics and {C}ontrolled {F}usion},
  volume~1.
\newblock PlenumPress, 1984.

\bibitem{Goudon1999}
T.~Goudon, A.~J\"{u}ngel, and Y.-J. Peng.
\newblock Zero-mass-electrons limits in hydrodynamic models for plasmas.
\newblock {\em Appl. Math. Lett.}, 12(4):75--79, 1999.

\bibitem{GOUDON2013579}
T.~Goudon and C.~Lin.
\newblock Analysis of the {$M1$} model: well-posedness and diffusion
  asymptotics.
\newblock {\em J. Math. Anal. Appl.}, 402(2):579--593, 2013.

\bibitem{Guo1998}
Y.~Guo.
\newblock Smooth irrotational flows in the large to the {E}uler--{P}oisson
  system in $\mathbb{{R}}^{3+1}$.
\newblock {\em Comm. Math. Phys.}, 195(2):249--265, 1998.

\bibitem{Guo2011}
Y.~Guo and B.~Pausader.
\newblock Global smooth ion dynamics in the {E}uler-{P}oisson system.
\newblock {\em Comm. Math. Phys.}, 303(1):89--125, 2011.

\bibitem{Hsiao2003}
L.~Hsiao, P.~A. Markowich, and S.~Wang.
\newblock The asymptotic behavior of globally smooth solutions of the
  multidimensional isentropic hydrodynamic model for semiconductors.
\newblock {\em J. Differential Equations}, 192(1):111--133, 2003.

\bibitem{Junca2002}
S.~Junca and M.~Rascle.
\newblock Strong relaxation of the isothermal {E}uler system to the heat
  equation.
\newblock {\em Z. Angew. Math. Phys.}, 53(2):239--264, 2002.

\bibitem{Kato1975}
T.~Kato.
\newblock The {C}auchy problem for quasi-linear symmetric hyperbolic systems.
\newblock {\em Arch. Rational Mech. Anal.}, 58(3):181--205, 1975.

\bibitem{Lax1973}
P.~D. Lax.
\newblock {\em Hyperbolic systems of conservation laws and the mathematical
  theory of shock waves}.
\newblock Conference Board of the Mathematical Sciences Regional Conference
  Series in Applied Mathematics, No. 11. Society for Industrial {and} Applied
  Mathematics, Philadelphia,Pa., 1973.

\bibitem{Li2021}
Y.~Li, Y.-J. Peng, and L.~Zhao.
\newblock Convergence rate from hyperbolic systems of balance laws to parabolic
  systems.
\newblock {\em Appl. Anal.}, 100(5):1079--1095, 2021.

\bibitem{LI2021185}
Y.~Li, Y.-J. Peng, and L.~Zhao.
\newblock Convergence rates in zero-relaxation limits for {E}uler-{M}axwell and
  {E}uler-{P}oisson systems.
\newblock {\em J. Math. Pures Appl. (9)}, 154:185--211, 2021.

\bibitem{Li2012}
Y.~Li and X.~Yang.
\newblock Global existence and asymptotic behavior of the solutions to the
  three-dimensional bipolar {E}uler-{P}oisson systems.
\newblock {\em J. Differential Equations}, 252(1):768--791, 2012.

\bibitem{Majda1984}
A.~Majda.
\newblock {\em Compressible fluid flow and systems of conservation laws in
  several space variables}, volume~53 of {\em Applied Mathematical Sciences}.
\newblock Springer-Verlag, New York, 1984.

\bibitem{Markowich1990}
P.~A. Markowich, C.~A. Ringhofer, and C.~Schmeiser.
\newblock {\em Semiconductor equations}.
\newblock Springer-Verlag, Vienna, 1990.

\bibitem{Peng2015a}
Y.-J. Peng.
\newblock Stability of non--constant equilibrium solutions for
  {E}uler--{M}axwell equations.
\newblock {\em J. Math. Pures Appl. (9)}, 103(1):39--67, 2015.

\bibitem{Peng2015}
Y.-J. Peng.
\newblock Uniformly global smooth solutions and convergence of
  {E}uler-{P}oisson systems with small parameters.
\newblock {\em SIAM J. Math. Anal.}, 47(2):1355--1376, 2015.

\bibitem{Peng2007}
Y.-J. Peng and S.~Wang.
\newblock Convergence of compressible {E}uler-{M}axwell equations to
  compressible {E}uler-{P}oisson equations.
\newblock {\em Chinese Ann. Math. Ser. B}, 28(5):583--602, 2007.

\bibitem{Peng2017}
Y.-J. Peng and V.~Wasiolek.
\newblock Global quasi-neutral limit of {E}uler-{M}axwell systems with velocity
  dissipation.
\newblock {\em J. Math. Anal. Appl.}, 451(1):146--174, 2017.

\bibitem{P2013}
Y.-J. Peng and J.~Xu.
\newblock Global well-posedness of the hydrodynamic model for two-carrier
  plasmas.
\newblock {\em J. Differential Equations}, 255(10):3447--3471, 2013.

\bibitem{Simon1987}
J.~Simon.
\newblock Compact sets in the space {$L^p(0,T;B)$}.
\newblock {\em Ann. Mat. Pura Appl. (4)}, 146:65--96, 1987.

\bibitem{Tong2020}
L.~Tong, Z.~Tan, and Q.~Xu.
\newblock Decay estimates of solutions to the bipolar compressible
  {E}uler-{P}oisson system in {$R^3$}.
\newblock {\em Z. Angew. Math. Phys.}, 71(1):Paper No. 19, 17, 2020.

\bibitem{Wasiolek2016}
V.~Wasiolek.
\newblock Uniform global existence and convergence of {E}uler-{M}axwell systems
  with small parameters.
\newblock {\em Commun. Pure Appl. Anal.}, 15(6):2007--2021, 2016.

\bibitem{Wu2016}
Z.~Wu and Y.~Li.
\newblock Pointwise estimates of solutions for the multi-dimensional bipolar
  {E}uler-{P}oisson system.
\newblock {\em Z. Angew. Math. Phys.}, 67(3):Art. 50, 20, 2016.

\bibitem{Wu2014}
Z.~Wu and W.~Wang.
\newblock Decay of the solution for the bipolar {E}uler-{P}oisson system with
  damping in dimension three.
\newblock {\em Commun. Math. Sci.}, 12(7):1257--1276, 2014.

\bibitem{Xi2020}
S.~Xi and L.~Zhao.
\newblock From bipolar euler-poisson system to unipolar euler-poisson system in
  the perspective of mass.
\newblock 2020.

\bibitem{Xu2015}
J.~Xu and S.~Kawashima.
\newblock The optimal decay estimates on the framework of {B}esov spaces for
  the {E}uler-{P}oisson two-fluid system.
\newblock {\em Math. Models Methods Appl. Sci.}, 25(10):1813--1844, 2015.

\bibitem{Xu2011}
J.~Xu and W.-A. Yong.
\newblock Zero-electron-mass limit of hydrodynamic models for plasmas.
\newblock {\em Proc. Roy. Soc. Edinburgh Sect. A}, 141(2):431--447, 2011.

\bibitem{Zhao2021a}
L.~Zhao and S.~Xi.
\newblock Convergence rate from systems of balance laws to isotropic parabolic
  systems, a periodic case.
\newblock {\em Asymptot. Anal.}, 124(1-2):163--198, 2021.

\end{thebibliography}

\end{document}